\DeclareFixedFont{\fiverm}{OT1}{cmr}{m}{n}{5pt}
\makeatletter \@addtoreset{equation}{section} \makeatother
\renewcommand\thefigure{\thesection.\@arabic\c@figure}
\renewcommand\thetable{\thesection.\@arabic\c@table}
\newtheorem{theorem}{Theorem}[section]
\newtheorem{lemma}[theorem]{Lemma}
\newtheorem{proposition}[theorem]{Proposition}
\newtheorem{corollary}[theorem]{Corollary}
\newtheorem{remark}[theorem]{Remark}
\title[E.F for a nongradient energy conserving model]{Equilibrium fluctuations for a nongradient energy conserving stochastic model} 
\begin{document}

\author{Freddy Hern\'andez}

\keywords{ Equilibrium fluctuations, Boltzmann-Gibbs principle, nongradient method, generalized 
Ornstein-Uhlenbeck process.}
\thanks{{\scriptsize CEREMADE, UMR
CNRS 7534, Universit\'e Paris-Dauphine, Place du Marechal de Lattre de Tassigny, 75775
Paris Cedex 16, FRANCE \&  IMPA, Estrada Dona Castorina 110, J. Botanico, 22460 Rio de Janeiro, Brazil.} 
}

\begin{abstract}
In this paper we study the equilibrium energy fluctuation field of a one-dimensional reversible non 
gradient model. We prove that the limit fluctuation process is governed by a generalized  Ornstein- 
Uhlenbeck process,  which covariances are given in terms of the diffusion coefficient. 

Adapting the non gradient method introduced by Varadhan,  we are able to derive the diffusion coefficient.  
The fact that the conserved quantity (energy) is not a linear functional of the coordinates of the system, 
introduces new difficulties of geometric nature when applying the nongradient method. 
\end{abstract}

\maketitle

\section{Introduction}
In recent works, a microscopic model for heat conduction in solids has been considered (\textit{c.f.} \cite{BO}, \cite{B},\cite{FNO}). In this model nearest neighbor atoms interact as coupled oscillators  forced by an additive noise which exchange kinetic energy between nearest neighbors. 

More precisely, in the case of periodic boundary conditions, atoms are labeled by $x \in \mathbb{T}_N=\{1,\cdots,N\}$. The configuration space is defined by $\Omega^N=(\mathbb R \times \mathbb R)^{\mathbb{T}_N}$, where for a typical element $(p_x,r_x)_{x \in\mathbb{T}_N}\in \Omega^N$, $r_x$ represents the distance between particles $x$ and $x+1$, and $p_x$ the velocity of the particle $x$. The formal generator of the system reads as $\mathcal{L}_N=\mathcal{A}_N+\mathcal{S}_N$, where
\begin{equation}
\mathcal{A}_N=\sum_{x\in\mathbb{T}_N}\{(p_{x+1}-p_x)\partial_{r_x}+(r_x-r_{x-1})\partial_{p_x}\}
   \;,
\end{equation}
and
\begin{equation}
\label{gradient}
\mathcal{S}_N=\frac{1}{2}\sum_{x\in\mathbb{T}_N}X_{x,x+1}[X_{x,x+1}]
   \;,
\end{equation}
with $X_{x,x+1}=p_{x+1}\partial_{p_x}-p_{x}\partial_{p_{x+1}}$. Here $\mathcal{A}_N$ is the Liouville operator of a chain of interacting harmonic oscillators and $\mathcal{S}_N$ is the noise operator.

In this work we focus on the noise operator $\mathcal S_N$, which acts only on velocities. Therefore, we restrict the configuration space to $\mathbb R^{\mathbb{T}_N}$. The total energy of the configuration $(p_x)_{x \in \mathbb T_N}$ is defined by
\begin{equation}
\label{totalenergy2}
\mathcal E =\frac{1}{2}\sum_{x\in\mathbb{T}_N}p_x^2\;.
\end{equation}
It is easy to check that $\mathcal{S}_N(\mathcal E)=0$, \textit{i.e} total energy is constant in time. 

The generator $\mathcal{S}_N$ defines a diffusion process with invariant measures given by  $\nu^N_y(dp)= \otimes_{x \in \mathbb{T}_N} \frac{1}{\sqrt {2\pi}y}e^{-p^2_x/2y^2}dp_x$ for all $y>0$. This process is not ergodic with respect to these measures, in fact,  for all $\beta>0$ the hyperspheres $p_1^2+\cdots+p_N^2=N\beta$ of average kinetic energy $\beta$ are invariant sets. Nevertheless, the restriction of the diffusion to each of these hyperspheres is nondegenerate and ergodic.

In analogy to \cite{V} (\textit{see also} \cite{Q}) , where Varadhan introduced the nongradient method, we introduce inhomogeneities into the diffusion generated by (\ref{gradient}) through a differentiable function $a(r,s)$ satisfying $0 < c\leq a(r,s)\leq C<\infty$ and having bounded continuous first derivatives (\textit{see} (\ref{infinitgenerator})). As a result, the introduction of the function $a(r,s)$ breaks the gradient structure this diffusion. 

The main result of this work is convergence of the energy fluctuation field defined in (\ref{eqflucfield}) to a generalized Ornstein-Uhlenbeck process,  when the process is at equilibrium. The covariances characterizing this generalized process are given in terms of the diffusion coefficient  $\hat{a}(y)$ (\textit{see} (\ref{Orns-Uhlen})). This diffusion coefficient is given in terms of a variational formula which is equivalent to the Green-Kubo formula (\textit{c.f.} \cite{Sp} p.180).  The main task of this work is to establish rigorously this variational formula.

In order to study the equilibrium fluctuations of interacting particle systems, Brox and Rost \cite{BR} introduced the Boltzmann-Gibbs principle and proved its validity for attractive zero range processes. Chang and Yau \cite{CY} proposed an alternative method to prove the Boltzmann-Gibbs principle for \textit{gradient systems}. This approach was extended to \textit{nongradient systems} by Lu \cite{Lu} and Sellami \cite{Se}.  

In what follows we describe the main features of the model we consider.

\textbf{The model is non gradient.} This difficulty has already appeared in the work of Bernardin \cite{B}, where there are two conserved quantities (total deformation and total energy). The energy current is not the gradient of a local function. To overcome this problem, an exact fluctuation-dissipation relation is obtained; that is, the current is written as a gradient plus a fluctuation term. On the other hand, in \cite{FNO}  Fritz \textit{et al} studied the equilibrium fluctuations for the model given in \cite{B}. The exact fluctuation-dissipation relation mentioned above plays a central role in the proofs of the hydrodynamic limit and the equilibrium fluctuations.
 
Systems for which there exists an exact fluctuation-dissipation relation are called \textit{almost gradient systems}. For this kind of systems, the minimizer in the variational formula of the diffusion coefficient can be found explicitly. In our setting we do not have such an exact relation, so we use the nongradient  Varadhan's method.

\textbf{The only conserved quantity (total energy) is not a linear function of the coordinates of the system.} In other words, the invariant surfaces are not hyperplanes. Specifically, in our case invariant surfaces are hyperspheres. 

In the non gradient  Varadhan's method, it is central to have a characterization of the space over which the infimum in the variational problem defining the diffusion coefficient is taken. In order to obtain such characterization, some results related to differential forms on spheres and  integration over spheres are needed. 

\textbf{We do not have good control when dealing with large velocities.} 
This lack of control makes  the estimation of  exponential moments difficult. In \cite{B} the author manages to overcome this difficulty by adopting a microcanonical approach.  Estimation of  exponential moments arises in our case when trying to adapt the usual proof of tightness. Using the microcanonical approach mentioned before, lead us to an identity we are unable to prove. This identity is in fact equivalent to the one conjectured by Bernardin  (\cite{B}, lemma 6.3). To avoid the exponential estimate, we exploits the fact that Boltzmann-Gibbs principle can be interpreted as an asymptotic gradient condition (as pointed out in \cite{CLO}). 

Let us finish by explaining how this paper is organized. By adapting the method introduced in \cite{V} we identify the diffusion term (Section \ref{CLTVandDC}), which allows us to derive the Boltzmann-Gibbs principle (Section \ref{Boltzmann-Gibbs}). 
This is the key point to show that the energy fluctuation field  converges in the sense of finite dimensional distributions to a generalized Ornstein-Uhlenbeck process (Section \ref{finite-dimensional distributions}). Moreover, using again the  Boltzmann-Gibbs principle we also prove tightness for the energy fluctuation field in a specified Sobolev space (Section \ref{Tightness}), which together with the finite dimensional convergence implies the convergence in distribution to the generalized Ornstein-Uhlenbeck process mentioned above. In Section \ref{{H}_y} a characterization of the space involved in the variational problem defining the diffusion coefficient is given. This characterization relies on a sharp spectral gap estimate (Appendix \ref{spectralgap}) and some integrability conditions for Poisson systems  studied in Appendix \ref{geomconsider}. For the sake of completeness we state in Appendix \ref{equiensem} an equivalence of ensembles result.  

\section{Notations and Results}
\label{NotandRes}
We will now give a precise description of the model. We consider a system of $N$ particles in one dimension evolving under an interacting random mechanism. It is assumed that the spatial distribution of particles is uniform, so that the state of the system is given by specifying the $N$ velocities.

Let $\mathbb T=(0,1]$ be the 1-dimensional torus, and for a positive integer $N$ denote by $\mathbb{T}_N$ the lattice torus of length N : $\mathbb{T}_N=\{1,\cdots,N\}$. The configuration space is denoted by $\Omega^N=\mathbb{R}^{\mathbb{T}_N}$ and a typical configuration is denoted by $p=(p_x)_{x \in \mathbb{T}_N}$,  where $p_x$ represents the velocity of the particle in $x$. The velocity configuration $p$ changes with time and, as a function of time undergoes a diffusion in $\mathbb R^N$.

The diffusion mentioned above have as infinitesimal generator the following operator
\begin{equation}
\label{infinitgenerator}
\mathcal{L}_N=\frac{1}{2}\sum_{x\in\mathbb{T}_N}X_{x,x+1}[a(p_x,p_{x+1})X_{x,x+1}],
\end{equation}
where $X_{x,z}=p_{z}\partial_{p_x}-p_{x}\partial_{p_{z}}$, $a : \mathbb R^2 \to \mathbb R$ is a differentiable function satisfying $0 < c\leq a(x,y)\leq C<\infty$ with bounded continuous first derivatives. Of course, all the sums are taken \textit{modulo} $N$. Observe that the total energy defined as $\mathcal{E}^N=\frac{1}{2}\sum_{x\in\mathbb{T}_N}p_x^2$  \ satisfies $\mathcal{L}_N(\mathcal{E}^N)=0$, \textit{i.e}  total energy is a conserved quantity.

Let us consider for every $y >0 $ the Gaussian product measure $\nu_{y}^N$ on $\Omega^N$ with density relative to the Lebesgue measure given by
\begin{equation}
\label{invmeas}
\nu_{y}^N(dp)=\prod_{x\in\mathbb{T}_N}\frac{e^{-\frac{p^2_x}{2y^2}}}{\sqrt{2\pi}y} dp_x,
\end{equation}
where $p=(p_{1},p_{2},\hdots,p_{N})$. 

Denote by ${L}^2(\nu_{y}^N)$ the Hilbert space of functions $f$ on $\Omega^N$ such that $\nu_{y}^N(f^2)< \infty$. $\mathcal{L}_N$ is formally symmetric on ${L}^2(\nu_{y}^N)$. In fact, is easy to see that for smooth functions $f$ and $g$ in a core of the operator $\mathcal{L}_N$, we have for all $y>0$
\begin{align*}
\int_{\mathbb{R}^{N}} \mathcal{L}_{N}(f)g\nu_{y}^N(dp)
                            =\int_{\mathbb{R}^{N}} f\mathcal{L}_{N}(g)\nu_{y}^N(dp).
\end{align*}
In particular, the diffusion is reversible with respect to all the invariant measures $\nu^N_y$. 

On the other hand, for every $y>0$ the Dirichlet form of the diffusion with respect to $\nu_y^N$ is given by
\begin{align}
\label{Dirichletform}
\nonumber \mathcal{D}_{N,y}(f)
             =\frac{1}{2}\sum_{x\in\mathbb{T}_N}\int_{\mathbb{R}^{N}} a(p_x,p_{x+1})[X_{x,x+1}(f)]^2\nu_{y}^N(dp)\;.
\end{align}

Denote by $\{p(t),t\geq 0\}$ the Markov process generated by $N^2\mathcal{L}_N$ (the factor $N^2$ correspond to an acceleration of time). Let $C(\mathbb{R}_{+},\Omega^N)$ be the space of continuous trajectories on the configuration space. Fixed a time $T>0$ and for a given measure $\mu^N$ on $\Omega^N$, the probability measure on $C([0,T],\Omega^N)$ induced by this Markov process starting in $\mu^N$  will be denoted by $\mathbb{P}_{\mu^N}$. As usual, expectation with respect to $\mathbb{P}_{\mu^N}$ will be denoted by $\mathbb{E}_{\mu^N}$.

The diffusion generated by $N^2\mathcal{L}_N$ can also be described by the
following system of stochastic differential equations
{\small
\begin{align*}
dp_x(t)&=\frac{N^2}{2}\{X_{x,x+1}[a(p_x,p_{x+1})]p_{x+1}-X_{x-1,x}[a(p_{x-1},p_{x})]p_{x-1}-p_{x}[a(p_x,p_{x+1})\\
       &+a(p_{x-1},p_x)]\}dt+N[p_{x-1}\sqrt{a(p_{x-1},p_{x})}dB_{x-1,x}-p_{x+1}\sqrt{a(p_x,p_{x+1})}dB_{x,x+1}],
\end{align*}
}
where $\{B_{x,x+1}\}_{x\in\mathbb{T}_N}$ are independent standard
Brownian motion. 

Then, by It\^o's formula we have that
{\small
\begin{equation}
\label{itoform}
dp^2_x(t)=N^2[W_{x-1,x}-W_{x,x+1}]dt+N[\sigma(p_{x-1},p_{x})dB_{x-1,x}(s)-\sigma(p_x,p_{x+1})dB_{x,x+1}(s)],
\end{equation}
}
where,
\begin{equation}
\label{current}
W_{x,x+1}=a(p_x,p_{x+1})(p^2_x-p^2_{x+1})-X_{x,x+1}[a(p_x,p_{x+1})]p_{x}p_{x+1}\;,
\end{equation}
and,
\begin{equation}
\sigma(p_x,p_{x+1})=2p_xp_{x+1}\sqrt{a(p_x,p_{x+1})}\;.
\end{equation}
We can think of $W_{x,x+1}$ as being the instantaneous microscopic current of energy between $x$ and $x+1$. Observe that the current $W_{x,x+1}$ cannot be written as the gradient of a local function, neither by an exact fluctuation-dissipation equation, \textit{i.e}  as the sum of a gradient and a dissipative term of the form $\mathcal{L}_N(\tau_x h)$. That is, we are in the \textit{nongradient}  case. 

The collective behavior of the system is described thanks to empirical measures. With this purpose let us introduce the energy empirical measure associated to the process defined by 
\[
\pi^N_t(\omega,du)=\frac{1}{N}\sum_{x\in\mathbb{T}_N}p^2_x(t)\delta_{\frac{x}{N}}(du)\;,
\]
where $\delta_u$ represents the Dirac measure concentrated on $u$.
 
To investigate equilibrium fluctuations of the empirical measure $\pi^N$ we fix once and for all $y>0$ and consider the system in the equilibrium $\nu_y^N$. Denote by  $Y_t^N$  the empirical energy fluctuation field acting on smooth functions $H:\mathbb T \to \mathbb R$ as
\begin{equation}
\label{eqflucfield}
Y_t^N(H) = \frac{1}{\sqrt{N}} \sum_{x \in \mathbb T_N}H(x/N)\{p_x^2(t)-y^2\}\;.
\end{equation}
On the other hand, let $\{Y_t\}_{t \geq 0}$ be the stationary generalized Ornstein-Uhlenbeck process with zero mean and covariances given by
\begin{equation}
\label{Orns-Uhlen}
\mathbb E[Y_t(H_1)Y_s(H_2)]=\frac{4y^4}{\sqrt{4\pi(t-s)\hat{a}(y)}}
\int_{\mathbb T}du\int_{\mathbb R}dv \bar{H_1}(u)\exp\left\{-\frac{(u-v)^2}{4(t-s)\hat{a}(y)}\right\} \bar{H_2}(v)\;,
\end{equation}
for every $0 \leq s \leq t$. Here $\bar{H_1}(u)$ ( resp $\bar{H_2}(u)$) is the periodic extension to the real line of the smooth function $H_1$ (resp $H_2$), and $\hat{a}(y)$ is the diffusion coefficient determined later in Section \ref{CLTVandDC}.

Consider for $k>\frac{3}{2}$ the Sobolev space  $\mathcal H_{-k}$, whose definition will be given at the beginning of Section \ref{Tightness}. Denote by $\mathbb Q_N$ the probability measure on $C([0,T],\mathcal H_{-k})$ induced by the energy fluctuation field $Y_t^N$ and the Markov process $\{p^N(t),t \geq 0\}$ defined at the beginning of this section, starting from the equilibrium probability measure $\nu^N_{y}$. Let $\mathbb Q$ be the probability measure on the space $C([0,T],\mathcal H_{-k})$ corresponding to the generalized Ornstein-Uhlenbeck process $Y_t$ defined above.

We are now ready to state the main result of this work.
\begin{theorem}
\label{equifluct}
The sequence of probability measures $\{\mathbb Q_N\}_{N \geq 1}$ converges weakly to the probability measure $\mathbb Q\;$.
\end{theorem}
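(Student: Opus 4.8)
The plan is to prove the two ingredients that, taken together, give weak convergence in $C([0,T],\mathcal H_{-k})$: convergence of finite-dimensional distributions of $Y^N_\cdot$ to those of the Ornstein--Uhlenbeck process $Y_\cdot$, and tightness of the sequence $\{\mathbb Q_N\}$ in $C([0,T],\mathcal H_{-k})$. Since $\mathcal H_{-k}$ is a separable Hilbert space, these two facts together imply weak convergence to the unique limit, which by the finite-dimensional identification must be $\mathbb Q$. The entire argument rests on the nongradient analysis of Section~\ref{CLTVandDC}: one first identifies the diffusion coefficient $\hat a(y)$ through Varadhan's variational formula, thereby obtaining an asymptotic fluctuation--dissipation decomposition of the current $W_{x,x+1}$ as a gradient $\hat a(y)(p^2_{x+1}-p^2_x)$ plus terms that are, in the appropriate $H_{-1}$ sense, negligible plus a dissipative piece $\mathcal L_N(\text{something})$.

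For the finite-dimensional distributions I would start from the Dynkin martingale associated with $Y^N_t(H)$ for a fixed smooth $H$. Writing $Y^N_t(H)=Y^N_0(H)+\int_0^t N^2\mathcal L_N\big(Y^N_s(H)\big)\,ds+M^N_t(H)$ and using It\^o's formula \eqref{itoform}, the drift term is a sum over bonds of discrete gradients of $H$ tested against the currents $W_{x,x+1}$. Here the Boltzmann--Gibbs principle of Section~\ref{Boltzmann-Gibbs} is the crucial input: it lets me replace each current $W_{x,x+1}$ by $\hat a(y)(p^2_{x+1}-p^2_x)$ up to an error vanishing in $L^2(\mathbb P_{\nu^N_y})$ after the space-time integration, so that the drift becomes (asymptotically) $\hat a(y)\int_0^t Y^N_s(\Delta H)\,ds$ with $\Delta$ the Laplacian. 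The quadratic variation of $M^N_t(H)$ is read off from the martingale part in \eqref{itoform}; using the equivalence of ensembles (Appendix~\ref{equiensem}) and stationarity, one checks it converges to the deterministic constant $t\cdot 8 y^4\hat a(y)\int_{\mathbb T}(\partial_u H)^2\,du$ (up to the correct normalization matching \eqref{Orns-Uhlen}). A martingale central limit theorem then gives that $Y^N_\cdot(H)$ converges to the solution of $dY_t(H)=\hat a(y)Y_t(\Delta H)\,dt+dM_t(H)$ with the stated Gaussian martingale, i.e. the generalized Ornstein--Uhlenbeck process; the multidimensional case (finitely many test functions) follows by the Cram\'er--Wold device applied to linear combinations.

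For tightness I would use Mitoma's criterion, reducing tightness in $C([0,T],\mathcal H_{-k})$ to tightness of the real-valued processes $\{Y^N_\cdot(H)\}$ for each fixed $H$ in a suitable dense family, together with a uniform bound $\sup_N \mathbb E\big[\sup_{t\le T}\|Y^N_t\|_{-k}^2\big]<\infty$; the choice $k>3/2$ is exactly what makes the Hilbert--Schmidt embedding summable against the second moments $\mathbb E[Y^N_t(e_j)^2]$ for an orthonormal basis $(e_j)$. Tightness of $Y^N_\cdot(H)$ is obtained from the martingale decomposition via an Aldous-type estimate: the martingale part is controlled by its quadratic variation, which is $O(1)$ uniformly, and the drift part is controlled again by the Boltzmann--Gibbs principle, which bounds the time increments. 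This is precisely the point in the paper where the usual route through exponential moment estimates is unavailable (large velocities are not well controlled), so instead one invokes the interpretation of the Boltzmann--Gibbs principle as an asymptotic gradient condition, following \cite{CLO}, to get the needed $L^2$ time-regularity of the drift without any exponential bound.

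I expect the main obstacle to lie, as the introduction signals, in the nongradient identification of $\hat a(y)$ and the associated Boltzmann--Gibbs principle rather than in the soft probabilistic wrap-up described above. Concretely, the hard part is establishing that the closed variational formula for $\hat a(y)$ genuinely computes the diffusion coefficient, which requires the characterization of the Hilbert space $\mathcal H_y$ over which the infimum is taken (Section~\ref{{H}_y}); because the conserved quantity is quadratic, the invariant manifolds are spheres rather than hyperplanes, so this characterization needs the sharp spectral gap estimate of Appendix~\ref{spectralgap} and the integrability properties of Poisson systems on spheres from Appendix~\ref{geomconsider}. Once $\hat a(y)$ and the Boltzmann--Gibbs replacement are in hand, the convergence of finite-dimensional distributions and tightness, and hence Theorem~\ref{equifluct}, follow by the standard scheme outlined here.
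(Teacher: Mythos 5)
Your top-level strategy is exactly the paper's: Theorem \ref{equifluct} is obtained by combining convergence of finite-dimensional distributions (Section \ref{finite-dimensional distributions}) with tightness in $C([0,T],\mathcal H_{-k})$, $k>3/2$ (Section \ref{Tightness}), and your tightness sketch (Mitoma-type criterion, summability of $\mathbb E[Y^N_t(e_n)^2]$ against $n^{-2k}$, Boltzmann--Gibbs to control the drift increments without exponential moments) is in substance what the paper does. One stylistic difference: for the finite-dimensional distributions the paper does not run a martingale-problem/martingale-CLT characterization; it chooses $H_s=S_{t-s}H$ with $S$ the semigroup of $\hat a(y)\Delta$ so that the drift cancels identically, and then identifies the laws of linear combinations directly as Gaussians (Lindeberg--Feller for $Y^N_0$, plus the Gaussian limit of the martingale). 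Your route is viable but requires in addition a uniqueness statement for the limiting martingale problem.

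There is, however, a genuine gap in your treatment of the noise. You propose to (a) use the Boltzmann--Gibbs principle to replace $W_{x,x+1}$ by $\hat a(y)(p^2_{x+1}-p^2_x)$ in the drift, and (b) read the limiting martingale variance off the quadratic variation of the \emph{bare} martingale $M^N_t(H)$, claiming it converges to a constant proportional to $y^4\hat a(y)\int(\partial_uH)^2du$. Both halves are wrong in the nongradient setting, and wrong in a compensating way that your sketch does not track. Theorem \ref{BoltzGibbs} only makes $W_{x,x+1}-\hat a(y)(p^2_{x+1}-p^2_x)-\mathcal L_N\tau^xF_k$ negligible; the term $\mathcal L_N\tau^xF_k$ is not part of the vanishing error. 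Its time integral is killed only when paired with the compensating stochastic integral $M^2_{N,F_k}$ via It\^o's formula (Lemma \ref{twoterms1}), and that compensator must then be absorbed into the martingale, producing $M^1_{N,F_k}=M^N+M^2_{N,F_k}$. The bare quadratic variation satisfies $\langle M^N(H)\rangle_t/t\to 4\,\mathbb E_{\nu_y}[a(p_0,p_1)p_0^2p_1^2]\int(H')^2du$, and by the variational formula \eqref{^a1}--\eqref{^a2} one has $\mathbb E_{\nu_y}[a(p_0,p_1)p_0^2p_1^2]\geq y^4\hat a(y)$ with equality only when $F\equiv 0$ is the minimizer, i.e.\ in the gradient case. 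It is the corrected martingale $M^1_{N,F_k}$ whose variance converges to $4y^4\hat a(y)\,t\int(H')^2du$ (Lemma \ref{convermart}), precisely because $\{\tfrac12F_k\}$ is a minimizing sequence in \eqref{sequencetohat}. Following your outline literally would therefore produce an Ornstein--Uhlenbeck limit with the wrong (too large) noise strength; the fluctuation term in the current decomposition must be carried into the martingale, not discarded with the error.
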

The proof of Theorem \ref{equifluct} will be divided into two parts. On the one hand, in Section \ref{Tightness} we prove tightness of $\{\mathbb Q_N\}_{N \geq 1}$ , where also a complete description of the space $\mathcal H_{-k}$ is given. On the other hand, in Section \ref{finite-dimensional distributions} we prove the finite-dimensional distribution convergence. These two results together imply the desired result. 
Let us conclude this section with a brief description of the approach we follow.

Given a smooth function $H:\mathbb{T} \times [0,T] \to \mathbb{R}$, we have after (\ref{itoform}) that
\begin{equation}
\label{martingale}
\begin{split}
M_N^H(t)=Y_t^N(H_t) -Y_t^N(H_0) -\int^{t}_{0}Y_t^N(\partial_s H_s)  ds \\
      +\int^{t}_{0}\sqrt N\sum_{x\in\mathbb{T}_N}\nabla_N H(\frac{x}{N},s)W_{x,x+1}(s)ds\;, 
\end{split}
\end{equation}
where $H_t(\cdot) = H(\cdot,t)$ and the left hand side is the martingale
\begin{equation*}
M_N^H(t)=\frac{1}{\sqrt N}\sum_{x\in\mathbb{T}_N}\int^{t}_{0}\nabla_N H(\frac{x}{N},s)\sigma(p_x,p_{x+1})dB_{x,x+1}(s)\;,
\end{equation*}
whose quadratic variation is given by
\begin{equation*}
\langle M_N^H\rangle(t)=\frac{1}{N}\sum_{x\in\mathbb{T}_N}\int^{t}_{0}|\nabla_N H(\frac{x}{N},s)|^2a(p_x,p_{x+1})p^2_xp^2_{x+1}ds\;.
\end{equation*}
Here $\nabla_N$ denotes the
discrete gradient. Recall that if $H$
is a smooth function defined on $\mathbb{T}$ and $\nabla$ is the
continuous gradient, then
\[
\nabla_N H(\frac{x}{N})=N[H(\frac{x+1}{N})-H(\frac{x}{N})]=(\nabla
H)(\frac{x}{N})+o(N^{-1}).
\]
In analogy, $\Delta_N$ denotes the discrete Laplacian, which satisfies
\[
\Delta_N H(\frac{x}{N})=N^2[H(\frac{x+1}{N})-2H(\frac{x}{N})+H(\frac{x-1}{N})]=(\Delta
H)(\frac{x}{N})+o(N^{-1}),
\]
with $\Delta$ being the continuous Laplacian.

To close the equation for the martingale $M_N^H(t)$ we have to replace the term involving the microscopic currents in (\ref{martingale}) with a term involving $Y_t^N$. Roughly speaking, what makes possible this replacement is the fact that non-conserved quantities fluctuates faster than conserved ones. Since the total energy is the unique conserved quantity of the system, it is reasonable that the only surviving part of the fluctuation field represented by the last term in (\ref{martingale}) is its projection over the conservative field $Y_t^N$. This is the content of the Boltzmann-Gibbs Principle (see \cite{BR}).

Recall  that in fact we are in a \textit{nongradient} case. Therefore, in order to perform the replacement mentioned in the previous paragraph, we follow the approach proposed by Varadhan in \cite{V}. Roughly speaking, the idea is to decompose the current as a sum of a gradient term plus a fluctuation term. The key point is that such a decomposition allows to study separately the diffusive part of the current and the part coming from a fluctuation term.

\section{Convergence of the finite-dimensional distributions}
\label{finite-dimensional distributions}
We state the main result of the section.
\begin{theorem}
\label{equifluct1}
The finite dimensional distributions of the fluctuation field $Y_t^N$ defined in (\ref{eqflucfield}) converges, as $N$ goes to infinity, to the finite dimensional distributions of the generalized Ornstein-Uhlenbeck process $Y$ defined in \text{(\ref{Orns-Uhlen})}.
\end{theorem}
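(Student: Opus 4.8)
The plan is to follow the classical martingale route of Holley and Stroock: since a generalized Ornstein--Uhlenbeck process is a Gaussian Markov process, it is determined by its initial law together with a martingale problem, so it suffices to verify that $\{Y_t^N\}$ satisfies the corresponding discrete versions. Thus I would reduce the statement to three ingredients: (i) $Y_0^N$ converges, jointly in finitely many test functions, to the centered Gaussian field with the covariance obtained from (\ref{Orns-Uhlen}) at $s=t$; (ii) for every smooth $H:\mathbb T\times[0,T]\to\mathbb R$ one can extract, from (\ref{martingale}) and the Boltzmann--Gibbs principle, a continuous martingale $\widetilde M_N^H$ with
\[
\widetilde M_N^H(t)=Y_t^N(H_t)-Y_0^N(H_0)-\int_0^tY_s^N\bigl(\partial_sH_s+\hat a(y)\Delta_NH_s\bigr)\,ds+o_N(1),
\]
whose quadratic variation converges in probability to a deterministic limit $c(y)\int_0^t\|\nabla H_s\|_{L^2(\mathbb T)}^2\,ds$, with $c(y)$ the constant dictated by (\ref{Orns-Uhlen}); (iii) these data identify the limit. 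Convergence of the finite dimensional distributions (as opposed to convergence in path space, treated separately in Section \ref{Tightness}) then follows without further probabilistic input.

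For (i), under $\nu_y^N$ the variables $\{p_x^2-y^2\}_{x\in\mathbb T_N}$ are i.i.d., centered and have moments of all orders, so the Lindeberg--Feller theorem applied to $Y_0^N(H)=N^{-1/2}\sum_xH(x/N)(p_x^2-y^2)$, together with the Cram\'er--Wold device, gives joint Gaussian convergence with covariance proportional to $\int_{\mathbb T}H_1H_2\,du$. For (ii), I would start from the identity (\ref{martingale}) and use the Boltzmann--Gibbs principle of Section \ref{Boltzmann-Gibbs} (which rests on the variational identification of $\hat a(y)$ in Section \ref{CLTVandDC}) to replace the current term $\int_0^t\sqrt N\sum_x\nabla_NH(x/N,s)W_{x,x+1}(s)\,ds$ by $-\hat a(y)\int_0^tY_s^N(\Delta_NH_s)\,ds$ up to a remainder vanishing in probability, uniformly on $[0,T]$; since the nongradient decomposition of the current carries a dissipative part of the form $\mathcal L_N(\tau_x g)$, its Dynkin martingale (after summing against $N^{-1/2}\nabla_NH$) must be added to $M_N^H$, producing $\widetilde M_N^H$. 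A replacement (local ergodic) lemma applied to $\langle M_N^H\rangle$ plus the correction, in which the cross terms and the $g$-terms combine to exactly the infimum defining $\hat a(y)$, then yields the announced deterministic limit of $\langle\widetilde M_N^H\rangle$; being a continuous martingale, $\widetilde M_N^H$ converges in law, by Rebolledo's criterion, to a Gaussian martingale with that bracket. Here one also uses the stated uniform convergence $\Delta_NH\to\Delta H$, $\nabla_NH\to\nabla H$ to pass to continuous operators.

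To deduce (iii), fix $0\le t_1<\cdots<t_n\le T$ and smooth $G_1,\dots,G_n:\mathbb T\to\mathbb R$. On the interval $[t_{n-1},t_n]$ choose $H_s=e^{(t_n-s)\hat a(y)\Delta}G_n$, which solves $\partial_sH_s+\hat a(y)\Delta H_s=0$, so that the drift in the displayed identity vanishes and
\[
Y_{t_n}^N(G_n)=Y_{t_{n-1}}^N\bigl(e^{(t_n-t_{n-1})\hat a(y)\Delta}G_n\bigr)+\bigl(\widetilde M_N^H(t_n)-\widetilde M_N^H(t_{n-1})\bigr)+o_N(1),
\]
the martingale increment being asymptotically centered Gaussian of deterministic variance and asymptotically independent of the natural filtration $\mathcal F_{t_{n-1}}^N$. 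Conditioning on $\mathcal F_{t_{n-1}}^N$, iterating down to $t_1$, and invoking the base case at $t=0$, one obtains joint convergence of $(Y_{t_1}^N(G_1),\dots,Y_{t_n}^N(G_n))$ to a centered Gaussian vector; unfolding the heat semigroup $e^{r\hat a(y)\Delta}$ in the resulting covariances reproduces exactly formula (\ref{Orns-Uhlen}).

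I expect the decisive difficulty to be step (ii): the Boltzmann--Gibbs replacement in the nongradient regime and, above all, the identification of $\lim\langle\widetilde M_N^H\rangle$ with the variational coefficient $\hat a(y)$, which is where the entire nongradient apparatus of Sections \ref{CLTVandDC} and \ref{Boltzmann-Gibbs} enters and where the various error terms must be controlled carefully (in particular, as emphasized in the introduction, without exponential moment bounds). Granting that input, the initial central limit theorem, the martingale central limit theorem and the inductive computation of the joint characteristic function are routine.
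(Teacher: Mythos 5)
Your proposal is correct and follows essentially the same route as the paper: the martingale decomposition of $Y_t^N(H_t)$ with the nongradient correction $\mathcal L_N(\tau^xF_k)$ and its associated Dynkin martingale, the Boltzmann--Gibbs replacement of the current by $-\hat a(y)\int_0^tY_s^N(\Delta_NH_s)\,ds$, the choice $H_s=S_{t-s}H$ to kill the drift, the identification of the limiting bracket of the corrected martingale with the variational expression defining $\hat a(y)$ (the paper's Lemma \ref{convermart}), and the Lindeberg--Feller CLT for $Y_0^N$ combined with conditioning on the initial filtration. The only cosmetic difference is that you phrase the multi-time step as an iterated martingale problem with Rebolledo's criterion, whereas the paper computes variances of linear combinations directly for the two-time case and notes the general case is analogous.
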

In this setting, convergence of finite dimensional distributions means that given a positive integer $k$, for every $\{t_1,\cdots,t_k\} \subset [0,T]$ and every collection of smooth functions $\{H_1,\cdots,H_k\}$, the vector $\left(Y_{t_1}^N(H_1),\cdots,Y_{t_k}^N(H_k)\right)$ converges in distribution to the vector $\left(Y_{t_1}(H_1),\cdots,Y_{t_k}(H_k)\right)$.

From (\ref{martingale}) we have
\begin{align*}
Y_t^N(H) =\; &Y_0^N(H) - \int^t_0 \sqrt{N} \sum_{x \in \mathbb T_N}\nabla_N H(x/N) W_{x,x+1}(s)ds - 
\end{align*} 
The idea is to use the observations made at the end of Section \ref{NotandRes}, together with $\sum_{x \in \mathbb T_N}\Delta_NH(x/N)=0$, in order to replace the integral term corresponding to the current $W_{x,x+1}$ by an expression involving the empirical energy fluctuation field, namely $\int_0^t Y_s^N(\Delta_NH)ds$.

We begin by rewriting expression (\ref{martingale}) as
\begin{align}
\label{redependingtimemart}
Y_t^N(H_t) =\; &Y_0^N(H_0) +\int^t_0 Y_s^N( \partial_sH_s +\hat{a}(y)\Delta_NH_s )ds \; - I^{1}_{N,F}(H^{\cdot t}) -I^{2}_{N,F}(H^{\cdot t}) \nonumber \\
        &-M^{1}_{N,F}(H^{\cdot t})-M^{2}_{N,F}(H^{\cdot t})\;,
\end{align} 
where $F$ is a fixed smooth local function and
{\small
\begin{align*}
I^{1}_{N,F}(H^{\cdot t})&= \int^t_0 \sqrt{N} \sum_{x \in \mathbb T_N}\nabla_N H_s(x/N) \left[W_{x,x+1}-\hat{a}(y)[p^2_{x+1}-p^2_x]-\mathcal{L}_N\tau^x F\right]ds\;,\\
I^{2}_{N,F}(H^{\cdot t})&= \int^t_0 \sqrt{N} \sum_{x \in \mathbb T_N}\nabla_N H_s(x/N) \mathcal{L}_N\tau^x Fds\;,\\
M^{1}_{N,F}(H^{\cdot t})&= \int^t_0 \frac{2}{\sqrt{N}} \sum_{x \in \mathbb T_N}\nabla_N H_s(x/N)\tau^x\sqrt{a(p_0,p_1)}\left[p_0p_1+X_{0,1}(\sum_{i \in \mathbb T_N}\tau^i F)\right]dB_{x,x+1}\;,\\
M^{2}_{N,F}(H^{\cdot t})&= \int^t_0 \frac{2}{\sqrt{N}} \sum_{x \in \mathbb T_N}\nabla_N H_s(x/N)\sqrt{a(p_x,p_{x+1})}X_{x,x+1}(\sum_{i \in \mathbb T_N}\tau^i F)\;dB_{x,x+1}(s)\;.
\end{align*}
}
Here $\tau^x$ represents translation by $x$, and the notation $H^{\cdot t}$ stressed the fact that functionals depend on the function $H$ through times in the interval $[0,t]$. Let us now explain the reason to rewrite expression (\ref{martingale}) in this way. 

In Section \ref{CLTVandDC} (see (\ref{^a1}) and (\ref{^a2})) the following variational formula for the diffusion coefficient $\hat{a}(y)$  is obtained.\begin{equation*}
\hat{a}(y)=y^{-4}\inf_{F} a(y,F)\;,
\end{equation*}
where the infimum is taken over all local smooth functions belonging to the Schwartz space, and
\begin{equation*}
a(y,F)=\textbf{E}_{\nu_y}[a(p_0,p_1)\{ p_0p_1+X_{0,1}(\sum_{x \in \mathbb Z}\tau^x F )\}^2]\;.
\end{equation*}
Let $\{\frac{1}{2}F_k\}_{k \geq  1}$ be a minimizing sequence of local functions, that is
\begin{equation}
\label{sequencetohat}
\lim_{k \to \infty} a(y,\frac{1}{2}F_k)=y^4\hat{a}(y) \;.
\end{equation}
With this notation we have the following result.
\begin{theorem}[\bf{Boltzmann-Gibbs Principle}]
\label{BoltzGibbs}
For the sequence $\{F_k\}_{k \geq  1}$ given above and every smooth function $H:\mathbb T\times [0,T]  \to \mathbb R$, we have
\begin{equation}
\lim_{k \to \infty} \lim_{N \to \infty}\mathbb E_{\nu^N_y}\left[(I^{1}_{N,F_k}(H^{\cdot t})^2\right]\;=\;0\;.
\end{equation}
\end{theorem}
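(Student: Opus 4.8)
The plan is to bound $\mathbb E_{\nu^N_y}[(I^1_{N,F_k}(H^{\cdot t}))^2]$ by the square of an $H_{-1}$-norm of the associated microscopic fluctuation field, and then to make that norm small using the variational characterisation of $\hat a(y)$ and the minimizing property of $\{F_k\}$. Write $g_k:=W_{0,1}-\hat a(y)(p_1^2-p_0^2)-\mathcal L F_k$, where $\mathcal L$ is the generator on $\mathbb Z$ and we have used $\mathcal L_N\tau^xF_k=\tau^x\mathcal L F_k$ for $N$ larger than the range of $F_k$, so that $I^1_{N,F_k}(H^{\cdot t})=\int_0^t\sqrt N\sum_{x\in\mathbb T_N}\nabla_NH_s(x/N)\,\tau^x g_k\,ds$. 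Each of $W_{0,1}$, $p_1^2-p_0^2=X_{0,1}(p_0p_1)$ and $\mathcal L F_k$ is an $X_{x,x+1}$-derivative or an $\mathcal L$-image, so $g_k$ has zero conditional expectation given the total energy; in particular it lies in the range of $(-\mathcal L_N)^{1/2}$ on each invariant sphere and the $H_{-1}$-norms below are finite. Since $\{p(t)\}$ is stationary and reversible under $\nu^N_y$ with generator $N^2\mathcal L_N$, the Kipnis--Varadhan inequality for additive functionals of reversible processes gives
\begin{equation*}
\mathbb E_{\nu^N_y}\Big[\big(I^1_{N,F_k}(H^{\cdot t})\big)^2\Big]\ \le\ C\,t\int_0^t\Big\|\,\sqrt N\!\!\sum_{x\in\mathbb T_N}\!\nabla_NH_s(x/N)\,\tau^x g_k\,\Big\|_{-1,N}^2\,ds,
\end{equation*}
where $\|V\|_{-1,N}^2:=\sup_f\big\{2\langle V,f\rangle_{\nu^N_y}-N^2\mathcal D_{N,y}(f)\big\}$, the supremum running over a core of $\mathcal L_N$ (recall $\langle f,-\mathcal L_Nf\rangle_{\nu^N_y}=\mathcal D_{N,y}(f)$). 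Since $H$ is smooth it suffices to show, for each fixed $k$, that $\limsup_{N\to\infty}\sup_{0\le s\le t}\|\sqrt N\sum_{x}\nabla_NH_s(x/N)\tau^x g_k\|_{-1,N}^2\le C(H,t)\,\sigma_y(g_k)^2$ for a quantity $\sigma_y(g_k)$ that tends to $0$ as $k\to\infty$.

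For the $H_{-1}$ estimate I would use the dual form of $\|\cdot\|_{-1,N}$ and bound $2\langle V,f\rangle_{\nu^N_y}$ from above by decomposing the lattice into mesoscopic blocks, absorbing the fluctuating part of $V$ within each block into $N^2\mathcal D_{N,y}(f)$ by means of the sharp spectral gap estimate of Appendix~\ref{spectralgap}, and evaluating the surviving ``mean'' term with the equivalence of ensembles of Appendix~\ref{equiensem} together with the integrability bounds of Appendix~\ref{geomconsider}. Passing to the limit $N\to\infty$ (with a law of large numbers under $\nu^N_y$) then yields the bound, with $\sigma_y(g)^2$ equal to the square of the seminorm of the local function $g$ in the Hilbert space $\mathcal H_y$ underlying the variational problem for $\hat a(y)$; this is precisely the ``asymptotic-gradient'' reading of the Boltzmann--Gibbs principle of \cite{CLO}.

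The variational input is then immediate. By construction (Section~\ref{CLTVandDC}), $\hat a(y)$ is the value for which the $\mathcal H_y$-distance from the current $W_{0,1}$ to the subspace spanned by $p_1^2-p_0^2$ and $\{\mathcal L F:F\text{ local}\}$ vanishes, and (\ref{sequencetohat}) says that this infimum is realized along the sequence $\tfrac12 F_k$; equivalently $g_k\to 0$ in $\mathcal H_y$, that is $\sigma_y(g_k)^2\to0$ as $k\to\infty$. Combining this with the reduction above, letting first $N\to\infty$ and then $k\to\infty$ gives $\lim_{k\to\infty}\lim_{N\to\infty}\mathbb E_{\nu^N_y}[(I^1_{N,F_k}(H^{\cdot t}))^2]=0$.

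The main obstacle is the $H_{-1}$ estimate: transferring the finite-volume norm $\|\cdot\|_{-1,N}$ to the infinite-volume seminorm $\sigma_y$ uniformly in $N$. The currents and the functions $\mathcal L F_k$ are polynomials of degree up to four in the velocities, and, lacking exponential control on large velocities, the block/cut-off arguments must be run with only $L^2$-type estimates; moreover, since the invariant manifolds are spheres rather than hyperplanes, the spectral-gap and equivalence-of-ensembles inputs must be used in the sharp, geometry-dependent forms developed in Appendices~\ref{spectralgap}, \ref{geomconsider} and \ref{equiensem}. By contrast, the Kipnis--Varadhan reduction is routine, and once the variational formula for $\hat a(y)$ is available the convergence $\sigma_y(g_k)^2\to0$ is automatic.
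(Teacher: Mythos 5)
Your overall architecture matches the paper's: a Kipnis--Varadhan bound reducing the time integral to an $\mathscr H_{-1}$-norm (the paper's Proposition \ref{cenlimvarian} and Remark \ref{cenlimvarianremark}), a decomposition of the lattice into mesoscopic blocks (the paper's $V_1+V_2+V_3$, with the resolvent localized to each block by convexity of the Dirichlet form), the equivalence of ensembles to pass to the microcanonical block measure, and finally the minimizing property of $\{F_k\}$. The boundary terms $V_2,V_3$ are handled in the paper by elementary integration by parts, consistent with your sketch.

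The genuine gap is in your last step, where you declare that ``once the variational formula for $\hat a(y)$ is available the convergence $\sigma_y(g_k)^2\to 0$ is automatic.'' It is not. After localization and equivalence of ensembles, the quantity you must control is
\begin{equation*}
\limsup_{M\to\infty}\tfrac{1}{M}\,\Delta_{M,y}\bigl(B_M+\hat a(y)A_M-H_M^{F_k},\;B_M+\hat a(y)A_M-H_M^{F_k}\bigr),
\end{equation*}
a limiting central-limit-theorem variance, not the weighted $L^2(\nu_y)$-distance appearing in the definition (\ref{^a1})--(\ref{^a2}) of $\hat a(y)$. Expanding it produces, among computable cross terms (Theorem \ref{thecovariances}), the diagonal term $\hat a(y)^2\lim_M\frac{1}{M}\Delta_{M,y}(A_M,A_M)$; only with the exact identity $\lim_M\frac{1}{2M}\Delta_{M,y}(A_M,A_M)=4y^4/\hat a(y)$ (Theorem \ref{covarA_N}) does the whole expression collapse to $4a(y,\tfrac12 F_k)-4y^4\hat a(y)$, which then vanishes as $k\to\infty$ by (\ref{sequencetohat}). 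With only the a priori bound $\Delta_{M,y}(A_M,A_M)\leq CM$ of Corollary \ref{duality}, nothing forces the limit to be small. Establishing Theorem \ref{covarA_N} is the central difficulty of the paper: it requires the block averaging and weak-limit construction of Lemmas \ref{lemmakblock}--\ref{hat{a}lemma3}, the characterization of $\mathcal H_y$ in Section \ref{{H}_y} (closed forms on spheres, Appendix \ref{geomconsider}; spectral gap, Appendix \ref{spectralgap}). Your proposal conflates the limiting $H_{-1}$-seminorm with the $\mathcal H_y$-seminorm and thereby assumes precisely the identification that the nongradient method is designed to prove. A secondary, less serious point: your mechanism for the block $H_{-1}$ estimate (absorbing fluctuations via the spectral gap inside each block) is not what the paper does in Section \ref{Boltzmann-Gibbs}; there the localization is achieved by convexity of the Dirichlet form, and the spectral gap enters only later, through Section \ref{{H}_y}.
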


On the other hand, a judicious choice of the function $H$ will cancel the second term in the right hand side of (\ref{redependingtimemart}). Let us firstly note that we can replace $\Delta_N H_s$ by $\Delta H_s$.
In fact, the smoothness of $H$ implies the existence of a constant $C >0$ such that
\[ 
|Y_s^N(\Delta H_s )- Y_s^N(\Delta_NH_s )| \leq \frac{C}{\sqrt N}\left( \frac{1}{N}\sum_{x \in \mathbb T_N} p_x^2(s)\right)\;,
\]
uniformly in $s$.  

Denote by $\{S_t\}_{t \geq 0}$ the semigroup generated by the Laplacian operator $\hat{a}(y)\Delta$. Given $t > 0$ and a smooth function $H:\mathbb T \to \mathbb R$, define $H_s=S_{t-s}H$ for $0 \leq s \leq t$. As is well known, the following properties are satisfied :
\begin{align}
\label{heat1} \partial_sH_s\;+\;\hat{a}(y)\Delta H_s\;&=\;0\;,\\
\label{heat2} \langle \;H_s \;,\;\hat{a}(y)\Delta H_s \;\rangle \;&=\; -\frac{1}{2}\frac{d}{ds}\langle\; H_s \;,\; H_s\; \rangle \;,
\end{align}
where $\left\langle \cdot,\cdot \right\rangle$ stands for the usual inner product in $L^2(\mathbb T)$.

In this way we obtain for all smooth functions $H:\mathbb T \to \mathbb R$ 
\begin{equation}
\label{itorelation}
Y_t^N(H) =\;Y_0^N(S_t H)  \;+\; O(\frac 1N) - I^{1}_{N,F}(H^{\cdot t}) -I^{2}_{N,F}(H^{\cdot t}) - M^{1}_{N,F}(H^{\cdot t})-M^{2}_{N,F}(H^{\cdot t})\;,
\end{equation} 
where $O(\frac 1N)$ denotes a function whose $L^2$ norm is bounded by $C/N$ for a constant $C$ depending just on $H$.

The following two lemmas concern the remaining terms.
\begin{lemma}
\label{twoterms1}
For every smooth function $H:\mathbb T \times [0,T] \to \mathbb R$ and local function $F$ in the Schwartz space, 
\begin{equation}
\lim_{N \to \infty} \mathbb E_{\nu^N_y}\left[\sup_{0\leq t \leq T}\left(I^{2}_{N,F}(H^{\cdot t}) + M^{2}_{N,F}(H^{\cdot t})\right)^2\right]\;=\;0\;.
\end{equation}
\end{lemma}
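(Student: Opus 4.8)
The plan is to recognise $I^{2}_{N,F}(H^{\cdot t})+M^{2}_{N,F}(H^{\cdot t})$ as the remainder in It\^o's formula applied to an auxiliary field of amplitude $O(N^{-1/2})$; once this identification is made, the statement follows from soft estimates. Note first that $\mathcal{L}_N\tau^xF$ is, for $N$ larger than the localization width of $F$, the translate of a mean-zero local function (it lies in the range of the generator), so neither $I^{2}_{N,F}$ nor $M^{2}_{N,F}$ tends to zero individually; it is essential to keep them together.

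Introduce, for $0\le s\le t$,
\[
\mathcal{G}_N(s,p)=\frac{1}{N^{3/2}}\sum_{x\in\mathbb T_N}\nabla_N H_s(x/N)\,(\tau^xF)(p)\;,
\]
and apply It\^o's formula to $\mathcal{G}_N(s,p(s))$ along the process generated by $N^2\mathcal{L}_N$. Since $N^2\mathcal{L}_N\mathcal{G}_N(s,\cdot)=\sqrt N\sum_x\nabla_N H_s(x/N)\,\mathcal{L}_N\tau^xF$, the bounded-variation part of the decomposition is exactly $\int_0^t\partial_s\mathcal{G}_N(s,p(s))\,ds+I^{2}_{N,F}(H^{\cdot t})$, which is just the defining formula of $I^{2}_{N,F}$. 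The martingale part has, on the bond $(z,z+1)$, the integrand $-N^{-1/2}\sqrt{a(p_z,p_{z+1})}\sum_x\nabla_N H_s(x/N)\,X_{z,z+1}(\tau^xF)$; using $X_{z,z+1}(\tau^xF)=\tau^x(X_{z-x,z-x+1}F)$ and the fact that $X_{z-x,z-x+1}F$ vanishes unless $z-x$ lies in a fixed bounded window, one may replace $\nabla_N H_s(x/N)$ by $\nabla_N H_s(z/N)$ on that window at the cost of an error $O(N^{-1})$ per term; after this replacement the window sums to $X_{z,z+1}(\sum_i\tau^iF)$ and the integrand becomes (after the routine bookkeeping of constants) the bond-$(z,z+1)$ integrand of $M^{2}_{N,F}$. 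The It\^o identity therefore reads
\[
I^{2}_{N,F}(H^{\cdot t})+M^{2}_{N,F}(H^{\cdot t})=\mathcal{G}_N(t,p(t))-\mathcal{G}_N(0,p(0))-\int_0^t\partial_s\mathcal{G}_N(s,p(s))\,ds+R_N(t)\;,
\]
where $R_N$ is the martingale carrying only the $O(N^{-1})$ corrections to the integrands.

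It remains to estimate the four terms. As $F$ lies in the Schwartz space it is bounded, and $\nabla_N H_s$, $\nabla_N\partial_sH_s$ are bounded uniformly in $s\in[0,T]$ by smoothness of $H$; hence $\sup_{s,p}|\mathcal{G}_N(s,p)|=O(N^{-1/2})$ and $\sup_{s,p}|\partial_s\mathcal{G}_N(s,p)|=O(N^{-1/2})$ \emph{deterministically}, so the first three terms contribute $O(N^{-1})$ to $\mathbb E_{\nu^N_y}\!\big[\sup_{0\le t\le T}(\cdot)^2\big]$. For $R_N$, each bond integrand is $O(N^{-3/2})$ in $L^2(\nu^N_y)$ (using $\|X_{j,j+1}F\|_{L^2(\nu_y)}<\infty$, which holds because $\partial F$ is rapidly decreasing), and there are $N$ bonds, so $\mathbb E_{\nu^N_y}[\langle R_N\rangle(T)]=O(N^{-2})$; Doob's inequality then gives $\mathbb E_{\nu^N_y}[\sup_{0\le t\le T}R_N(t)^2]=O(N^{-2})$. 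Altogether $\mathbb E_{\nu^N_y}\!\big[\sup_{0\le t\le T}(I^{2}_{N,F}(H^{\cdot t})+M^{2}_{N,F}(H^{\cdot t}))^2\big]=O(N^{-1})\to0$.

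The only genuinely delicate point is the first one: because $I^{2}$ and $M^{2}$ do not vanish separately, the proof must produce the \emph{exact} It\^o remainder for $\mathcal{G}_N$ and verify that its drift and noise parts reproduce precisely $I^{2}_{N,F}$ and $M^{2}_{N,F}$ up to the harmless $R_N$; everything after that is dimensional analysis together with Doob. The role of the Schwartz hypothesis on $F$ is exactly to make the boundary and time-derivative bounds deterministic, thereby avoiding any equilibrium or exponential-moment estimate for the supremum over $t$.
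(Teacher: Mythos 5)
Your proposal is correct and follows essentially the same route as the paper: the author also introduces the auxiliary field $\zeta_{N,F}(t)=N^{-3/2}\sum_x\nabla_NH_t(x/N)\,\tau^xF(p(t))$ (your $\mathcal G_N$), applies It\^o's formula so that the drift produces $I^2_{N,F}$ plus a time-derivative term, bounds the boundary and time-derivative contributions deterministically by $O(N^{-1})$ using the boundedness of $F$ and smoothness of $H$, and controls the mismatch between the It\^o martingale and $M^2_{N,F}$ via the locality of $F$ (reindexing the window where $X_{z,z+1}(\tau^xF)\neq 0$) followed by Doob's inequality. The only differences are cosmetic bookkeeping of signs and factors of $2$, which the paper itself treats loosely.
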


\begin{lemma}
\label{convermart}
The process $M^{1}_{N,F_k}$ converges in distribution as $k$ increases to infinity after $N$, to a generalized Gaussian process characterized by
\begin{equation}
\label{limitcovariace}
\lim_{k \to \infty}\lim_{N \to \infty} \mathbb E_{\nu^N_y}\left[ M^{1}_{N,F_k}(H_1^{\cdot t})M^{1}_{N,F_k}(H_2^{\cdot t})
\right]\;=\;4y^4\int_0^t\int_{\mathbb T}\hat{a}(y)H_1^{\prime}(x,s)H_2^{\prime}(x,s)\;dxds\;,
\end{equation}
for every smooth function $H_i:\mathbb T \times [0,T] \to \mathbb R$ for $i=1,2$.
\end{lemma}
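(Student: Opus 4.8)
We sketch the argument. Fix $k\geq 1$ and regard $t\mapsto M^1_{N,F_k}(H^{\cdot t})$ as a continuous square-integrable martingale under $\mathbb P_{\nu^N_y}$. The strategy is the standard one for equilibrium fluctuations: first let $N\to\infty$ with $k$ fixed, identifying the limit by a martingale central limit theorem, and then let $k\to\infty$.

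From the It\^o representation of $M^1_{N,F_k}$ and polarization, the mutual quadratic variation of $t\mapsto M^1_{N,F_k}(H_1^{\cdot t})$ and $t\mapsto M^1_{N,F_k}(H_2^{\cdot t})$ at time $t$ equals
\[
\int_0^t \frac{4}{N}\sum_{x\in\mathbb T_N}\nabla_N H_{1,s}(x/N)\,\nabla_N H_{2,s}(x/N)\;\tau^x g_k\bigl(p(s)\bigr)\,ds,
\]
where $g_k=a(p_0,p_1)\bigl[p_0p_1+X_{0,1}(\sum_{i}\tau^i F_k)\bigr]^2$ is a local function with finite moments of all orders under $\nu_y$, since $a$ is bounded, $F_k$ is Schwartz, and $\nu_y$ is Gaussian. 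Because $M^1_{N,F_k}(\cdot^{\cdot 0})\equiv 0$, the expectation $\mathbb E_{\nu^N_y}[M^1_{N,F_k}(H_1^{\cdot t})M^1_{N,F_k}(H_2^{\cdot t})]$ coincides with the expectation of the bracket above; and since $\mathbb P_{\nu^N_y}$ is stationary and $\nu^N_y$ is translation invariant, this equals $4\,\textbf{E}_{\nu_y}[g_k]\int_0^t \frac1N\sum_{x}\nabla_N H_{1,s}(x/N)\nabla_N H_{2,s}(x/N)\,ds$, which converges as $N\to\infty$ to $4\,\textbf{E}_{\nu_y}[g_k]\int_0^t\!\int_{\mathbb T}H_1'(u,s)H_2'(u,s)\,du\,ds$.

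For the convergence in distribution of the martingale I would argue as follows. Because under $\nu_y$ the coordinates $(p_x)$ are i.i.d.\ and $g_k$ has bounded support, an elementary variance estimate shows $\frac1N\sum_x w(x/N)\tau^x g_k\to(\int_{\mathbb T}w)\,\textbf{E}_{\nu_y}[g_k]$ in $L^2(\nu_y)$ for every bounded $w$; combined with stationarity and the Cauchy--Schwarz inequality in the time variable, this gives that the bracket displayed above converges in probability to its deterministic limit. With the brackets converging and the fourth moments $\mathbb E_{\nu^N_y}[M^1_{N,F_k}(H^{\cdot t})^4]$ bounded uniformly in $N$ (via the Burkholder--Davis--Gundy inequality and the Gaussian moment bounds on $g_k$), the martingale central limit theorem applies, so $M^1_{N,F_k}$ converges, as $N\to\infty$, to a Gaussian process $\mathcal M_k$; the uniform fourth-moment bound also yields $\mathbb E[\mathcal M_k(H_1^{\cdot t})\mathcal M_k(H_2^{\cdot t})]=4\,\textbf{E}_{\nu_y}[g_k]\int_0^t\!\int_{\mathbb T}H_1'(u,s)H_2'(u,s)\,du\,ds$. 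Finally, by the defining property (\ref{sequencetohat}) of the minimizing sequence, together with the expression for $a(y,\cdot)$, $\textbf{E}_{\nu_y}[g_k]\to y^4\hat{a}(y)$ as $k\to\infty$; hence the covariances of $\mathcal M_k$ converge to $4y^4\int_0^t\!\int_{\mathbb T}\hat{a}(y)H_1'(x,s)H_2'(x,s)\,dx\,ds$, and since centred Gaussian processes with convergent covariances converge in distribution to the centred Gaussian process with the limiting covariance, the lemma follows.

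I expect the only genuinely delicate point to be the law of large numbers for the bracket; in this model, however, it is much softer than a general two-block estimate, since the equilibrium measure has independent coordinates and $g_k$ is a local function, so it reduces to the elementary variance bound above. The martingale central limit theorem and the uniform fourth-moment estimate are routine.
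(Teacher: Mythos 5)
Your computation of the covariance is exactly the paper's argument: identify the expectation of the product with the expectation of the mutual bracket, use translation invariance of $\nu^N_y$ to factor out $\textbf{E}_{\nu_y}[a(p_0,p_1)(p_0p_1+X_{0,1}(\sum_i\tau^iF_k))^2]$, pass to the Riemann-sum limit in $N$, and invoke (\ref{sequencetohat}) for the limit in $k$. The distributional convergence, which the paper leaves implicit, is handled by you through the standard route (law of large numbers for the bracket using independence and locality, plus the martingale central limit theorem), so the proposal is correct and essentially coincides with the paper's proof.
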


The proofs of Lemma \ref{twoterms1} and Lemma \ref{convermart} are postponed to the end of this section. The proof of Theorem \ref{BoltzGibbs} is considerably more difficult, and Section \ref{Boltzmann-Gibbs} is devoted to it.

Before entering in the proof of Theorem \ref{equifluct1} we state some remarks. Firstly, the convergence in distribution of $Y_0^N(H)$ to a Gaussian random variable with mean zero and variance $2y^4\langle H , H \rangle$ as $N$ tends to infinity, follows directly from the Lindeberg-Feller theorem.  

Property (\ref{heat2}) together with Lemma \ref{convermart} imply the convergence in distribution as $k$ increases to infinity after $N$ of the martingale $ M^{1}_{N,F_k}(H^{\cdot t})$ to a Gaussian random variable with mean zero and variance  $2y^4\langle H , H \rangle - 2y^4\langle S_tH , S_tH \rangle$.

Finally, observe that the martingale $ M^{1}_{N,F_k}(H^{\cdot t})$ is independent of the initial filtration $\mathcal F_0$.

\begin{proof}
[Proof of Theorem \ref{equifluct1}] 
For simplicity and concreteness in the exposition we will restrict ourselves to the two-dimensional case $(Y_t^N(H_1),Y_0^N(H_2))$ . Similar arguments may be given to show the general case.

In order to characterize the limit distribution of $(Y_t^N(H_1),Y_0^N(H_2))$ it is enough to characterize the limit distribution of all the linear combinations $\theta_1Y_t^N(H_1)+\theta_2Y_0^N(H_2)$.

From Lemma \ref{twoterms1}, Theorem \ref{BoltzGibbs} and expression \ref{itorelation} it follows
\begin{align*}
\theta_1Y_t^N(H_1)+\theta_2Y_0^N(H_2)\;
           &=\;Y_0^N(\theta_1S_t H_1+\theta_2H_2)  + \theta_1 I^N(H_1,F_k) -\theta_1 M^{1}_{N,F_k}(H_1^{\cdot t}),
\end{align*}
where $I^N(H,F_k)$ denotes a function whose $L^2$ norm tends to zero as $k$ increases to infinity after $N$.

Thus the random variable $\theta_1Y_t^N(H_1)+\theta_2Y_0^N(H_2)$ tends, as $N$ goes to infinity, to a Gaussian random variable with mean zero and variance 
 $2y^4\{\theta_1^2\langle H_1 , H_1 \rangle +2\theta_1\theta_2\langle S_tH_1 ,H_2 \rangle+\theta_2^2\langle H_2 , H_2 \rangle\} $.
This in turn implies
\[
\mathbb E_{\nu^N_y}[Y_t(H_1)Y_0(H_2)] \;=\; 2y^4\langle S_tH_1 ,H_2 \rangle \;,
\]
which coincide with (\ref{Orns-Uhlen}) as can be easily verified by using the explicit form of $S_tH$ in terms of the heat kernel.
\end{proof}
Now we proceed to give the proofs of Lemma \ref{twoterms1} and Lemma \ref{convermart}.
\begin{proof}
[Proof of Lemma \ref{twoterms1}]
Let us define
\begin{equation*}
\zeta_{N,F}(t) = \frac{1}{{N}^ {3/2}} \sum_{x \in \mathbb T_N}\nabla_{N} H_t(x/N)\tau^ x F(p(t))\;.
\end{equation*} 
From the It\^o's formula we obtain
{\small
\begin{align*}
\zeta_{N,F}(t) - &\zeta_{N,F}(0) = \frac{1}{2}I^{2}_{N,F}(H^{\cdot t})  + \int^t_0 \frac{1}{{N}^ {3/2}} \sum_{x \in \mathbb T_N} \partial_ t \nabla_N H_s(x/N) \tau^x F(p)ds \\
   &+ \int^t_0 \frac{1}{\sqrt{N}} \sum_{x \in \mathbb T_N}\nabla_N H_s(x/N) \sum_{z \in \mathbb T_N}\sqrt{a(p_z,p_{z+1})}X_{z,z+1}(\tau^x F)\;dB_{z,z+1}(s).
\end{align*}
}
Then $(I^{2}_{N,F}(H^{\cdot t})+M^{2}_{N,F}(H^{\cdot t}))^2$ is bounded above by 6 times the following sum
$$
(\zeta_{N,F}(t) - \zeta_{N,F}(0))^2
+
\left( \int^t_0 \frac{1}{{N}^ {3/2}} \sum_{x \in \mathbb T_N} \partial_ t \nabla_N H_s(x/N) \tau^x F(p)ds \right)^2+
$$
{\small
$$
\left(\frac{1}{2}M^{2}_{N,F}(H^{\cdot t}) - \int^t_0 \frac{1}{\sqrt{N}} \sum_{x \in \mathbb T_N}\nabla_N H_s(x/N) \sum_{z \in \mathbb T_N}\sqrt{a(p_z,p_{z+1})}X_{z,z+1}(\tau^x F)\;dB_{z,z+1}(s)\right)^2\!\!\!\!.
$$
}
Since $F$ is bounded and $H$ is smooth, is easy to see that the first two terms are of order $\frac {1}{N}$. Using additionally  the fact that $F$ is local, we can prove that the expectation of the $\sup_{0\leq t \leq T}$ of the third term is also of order $\frac {1}{N}$. In fact,  if $M \in \mathbb N$ is such that $F(p) = F(p_{0}, \cdots, p_{M})$, we are considering the difference between
$$
\int^t_0 \frac{1}{\sqrt{N}}  \sum_{x \in \mathbb T_N}\nabla_N H_s(x/N)\sqrt{a(p_x,p_{x+1})} \sum_{j = 1}^M X_{x,x+1}(\tau^{x-j} F)\;dB_{x,x+1}(s)\;,
$$
and
$$
 \int^t_0 \frac{1}{\sqrt{N}} \sum_{x \in \mathbb T_N}\nabla_N H_s(x/N)\sum_{j = 0}^{M+1} \sqrt{a(p_{x-1+j},p_{x+j})}X_{x-1+j,x+j}(\tau^x F)\;dB_{x-1+j,x+j}(s).
$$
After rearrangement of the sum, last line can be written as
$$
 \int^t_0 \frac{1}{\sqrt{N}} \sum_{x \in \mathbb T_N} \sqrt{a(p_{x},p_{x+1})} \sum_{j = 0}^{M+1}\nabla_N H_s(x-j+1/N)X_{x,x+1}(\tau^{x-j+1} F)\;dB_{x,x+1}(s) \;.
$$
The proof is then concluded by using Doob's inequality.
\end{proof}
\begin{proof}
[Proof of Lemma \ref{convermart}]
Using basic properties of the stochastic integral and the stationarity of the process, we can see that the expectation appearing in the left side of (\ref{limitcovariace}) is equal to
{\small
\begin{equation*}
\int^t_0 \frac{4}{N} \sum_{x \in \mathbb T_N}
\nabla_N H_{1,s}(x/N)\nabla_N H_{2,s}(x/N)\mathbb E_{\nu^N_y}\!\!\left[ \tau^x a(p_0,p_1)\left(p_0p_1\!\!+\!\!X_{0,1}(\sum_{i \in \mathbb T_N}\tau^i F_k)\right)^2\right]ds.
\end{equation*}
}
Translation invariance of the measure $\nu^N_y$ lead us to 
{\small
\begin{equation*}  
4\;\mathbb E_{\nu^{N}_y}\left[ a(p_0,p_1)\left(p_0p_1+X_{0,1}(\sum_{i \in \mathbb T_N}\tau^i F_k)\right)^2\right]
\int^t_0
\frac{1}{N}\sum_{x \in \mathbb T_N} \nabla_N H_{1,s}(x/N)\nabla_N H_{2,s}(x/N)ds,
\end{equation*}
}
and as $N$ goes to infinity we obtain
\begin{equation*}  
4\;\mathbb E_{\nu_y}\left[ a(p_0,p_1)\left(p_0p_1+X_{0,1}(\sum_{i \in \mathbb Z}\tau^i F_k)\right)^2\right]
\int^t_0
\int_{\mathbb T} H^{\prime}_{1}(u,s)H^{\prime}_{2}(u,s)\;du\;ds\;.
\end{equation*}
Finally from (\ref{sequencetohat}), taking the limit as $k$ tends to infinity we obtain the desired result.
\end{proof}

\section{Central Limit Theorem Variances and Diffusion Coefficient}
\label{CLTVandDC}
The aim of this section is to identify the diffusion coefficient $\hat{a}(y)$, which is the asymptotic component of the current $W_{x,x+1}$ in the direction of the gradient. More precisely, $\hat{a}(y)$ will be the constant for which the infimum over all smooth local functions $F$ of the expression below vanish.  

{\small
\begin{equation}
\label{localcurrentdensity}
\limsup_{N \to \infty}\frac{1}{2N}\limsup_{t \to \infty}\frac{1}{t}  \textbf E_{\nu_y}\left[ \left(\int_0^t 
\sum_{-N\leq x\leq x+1\leq N}\! \! \! \! \! \! W_{x,x+1}-  \hat{a}(y)(p^2_{x+1}-p^2_{x}) - \mathcal L(\tau^x F)\ ds\right)^2\right].
\end{equation}
}
Here we are considering the process generated by  $\mathcal L$ and $\nu_y$, the natural extension of $\mathcal L_N$ to the infinite product space $\Omega=\mathbb{R}^\mathbb{Z}$ and the infinite product measure (\ref{invmeas}), respectively. 

The form of the limit with respect to $t$ appearing in  (\ref{localcurrentdensity}) leads us to think in the central limit theorem  for additive functionals of Markov processes. Let us begin by introducing some notations and stating some general results for continuous time Markov processes.

Consider a continuous time Markov process $\{Y_s\}_{s \geq 0}$, reversible and ergodic with respect to invariant measure $\pi$. Denote by $\langle\;,\;\rangle_{\pi}$ the inner product in $L^2(\pi)$ and let us suppose that the infinitesimal generator of this process $\mathcal L:D(\mathcal L)\subset L^2(\pi) \to L^2(\pi)$ is a negative operator.

 Let $V \in L^2(\pi)$ be a mean zero function on the state space of the process. 
The central limit theorem proved by Kipnis and Varadhan in \cite{KV} for 
$$
X_s=\int_0^t V(Y_s)ds\;,
$$
implies that if $V$ is in the range of  $(-\mathcal L)^{-\frac{1}{2}}$, then the limiting variance $\lim_{t \to \infty}\frac{1}{t}E[X_t^2]$ exists and is equal to
\begin{equation}
\label{H_{-1}var1}
2 \langle V\;,(-\mathcal L)^{-1}V\;\rangle_{\pi}\;.
\end{equation}

By standard arguments we can extend $\sigma^2(V,\pi)$  to a symmetric bilinear form  $\sigma^2(V,Z,\pi)$ for  $V$ and $Z$ in the range of  $(-\mathcal L)^{-\frac{1}{2}}$.  This bilinear form represents limiting covariances, and an analog to the expression (\ref{H_{-1}var1}) can be easily obtained. 

On the other hand, limiting variances and covariances can be viewed as norms in Sobolev spaces which are defined in the following lines. Properties of this spaces will be also used in Section \ref{Boltzmann-Gibbs}.

Define for $f \in D(\mathcal L)\subset L^2(\pi)$,
\begin{equation}
\label{H^1norm}
||f||^2_1\;=\; \left\langle f, (-\mathcal L_N)f \right\rangle_{\pi}\;.
\end{equation}
It is easy to see that $||\cdot||_1$ is a norm in $D(\mathcal L)$ that satisfies the parallelogram rule, and therefore, that can be extended to an inner product in $D(\mathcal L)$. We denote by $\mathscr H_1$ the completion of $D(\mathcal L)$ under the norm $||\cdot||_1$, and by $\left\langle \;,\; \right\rangle_1$ the induced inner product.
Now define
\begin{equation}
\label{variational-1}
||f||^2_{-1}\;=\; \sup_{g \in D(\mathcal L)}\left\{ 2\left\langle f, g \right\rangle_{\pi} - \left\langle g, g \right\rangle_1 \right\}\;,
\end{equation} 
and denote by $\mathscr H_{-1}$ the completion with respect to $||\cdot||_{-1}$ of the set of functions in $L^2(\pi)$ satisfying $||f||_{-1} < \infty$. 
Later we state some well known properties of these spaces.
\begin{lemma}
\label{H_1H_{-1}norms}
For $f \in L^2(\pi)\cap \mathscr H_{1} $ and $g \in L^2(\pi)\cap \mathscr H_{-1} $, we have
\begin{itemize}
\item[\textit{i})]$||g||_{-1}\;=\;\sup_{h \in D(\mathcal L) \backslash \{0\}}\frac{\left\langle h, g \right\rangle_{\pi}}{||h||_{1}}$\;,
\item[\textit{ii})]$|\left\langle f, g \right\rangle_{\pi}|\; \leq \;||f||_{1}||g||_{-1}$\;.
\end{itemize}
\end{lemma}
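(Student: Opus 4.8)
The plan is to prove (i) by the standard scaling optimization applied to the variational formula (\ref{variational-1}), and then to deduce (ii) from (i) together with a density argument. For (i), fix $g \in L^2(\pi)\cap\mathscr H_{-1}$ and observe that the supremum in (\ref{variational-1}) may be computed direction by direction: for $h \in D(\mathcal L)\setminus\{0\}$ and $\lambda \in \mathbb R$ one has $2\langle g,\lambda h\rangle_\pi - ||\lambda h||_1^2 = 2\lambda\langle g,h\rangle_\pi - \lambda^2||h||_1^2$, a downward parabola in $\lambda$ whose maximum, attained at $\lambda = \langle g,h\rangle_\pi/||h||_1^2$ (here we use that $||\cdot||_1$ is a genuine norm on $D(\mathcal L)$, so $||h||_1>0$), equals $\langle g,h\rangle_\pi^2/||h||_1^2$. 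Since every nonzero test function has the form $\lambda h$, this gives $||g||_{-1}^2 = \sup_{h\in D(\mathcal L)\setminus\{0\}}\langle g,h\rangle_\pi^2/||h||_1^2$. Replacing $h$ by $-h$ shows that the supremum of the signed ratio $\langle h,g\rangle_\pi/||h||_1$ is nonnegative and equals the supremum of its absolute value (the degenerate case $\langle h,g\rangle_\pi=0$ for all $h$ forces both sides to vanish), so taking square roots yields (i).

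For (ii), consider first $f \in D(\mathcal L)$. Then $f$ is itself an admissible test function in (i), and rearranging $\langle f,g\rangle_\pi/||f||_1 \le ||g||_{-1}$, applied to both $f$ and $-f$, gives $|\langle f,g\rangle_\pi|\le ||f||_1\,||g||_{-1}$. For general $f \in L^2(\pi)\cap\mathscr H_1$ we approximate: such an $f$ belongs to the form domain $D((-\mathcal L)^{1/2})$ with $||f||_1 = ||(-\mathcal L)^{1/2}f||_{L^2(\pi)}$, so the resolvent approximants $f_n := n(n-\mathcal L)^{-1}f$ lie in $D(\mathcal L)$ and satisfy both $f_n \to f$ in $L^2(\pi)$ and $||f_n-f||_1 \to 0$, because $n(n-\mathcal L)^{-1}$ converges strongly to the identity on $L^2(\pi)$. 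Applying the case already treated to each $f_n$ and $g$ gives $|\langle f_n,g\rangle_\pi| \le ||f_n||_1\,||g||_{-1}$; letting $n\to\infty$, the left side tends to $|\langle f,g\rangle_\pi|$ since $g\in L^2(\pi)$ and $f_n\to f$ in $L^2(\pi)$, while $||f_n||_1 \to ||f||_1$, and (ii) follows.

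The computations are routine; the only point that requires a little care is the identification, used in the proof of (ii), of $L^2(\pi)\cap\mathscr H_1$ with the form domain $D((-\mathcal L)^{1/2})$. This is what guarantees an approximating sequence $\{f_n\}\subset D(\mathcal L)$ converging to $f$ \emph{simultaneously} in the $L^2(\pi)$ norm and in the $\mathscr H_1$ norm; without the former convergence one could not pass to the limit in the pairing $\langle f_n,g\rangle_\pi$, since $\mathscr H_1$-convergence alone controls neither the $L^2(\pi)$ norms of the $f_n$ nor their inner products against $g$.
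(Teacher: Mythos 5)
Your proof is correct. The paper states this lemma without proof, presenting it as a well-known property of the spaces $\mathscr H_{1}$ and $\mathscr H_{-1}$, so there is no argument in the text to compare against; your scaling optimization in $\lambda$ for part \textit{i}) and the passage from $f\in D(\mathcal L)$ to general $f$ via the resolvent approximants $f_n=n(n-\mathcal L)^{-1}f$ for part \textit{ii}) are exactly the standard route, and you correctly isolate the one point needing care, namely the identification of $L^2(\pi)\cap\mathscr H_{1}$ with the form domain of $(-\mathcal L)^{1/2}$, which is what provides a sequence in $D(\mathcal L)$ converging to $f$ simultaneously in the $L^2(\pi)$ and $\|\cdot\|_{1}$ norms.
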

Property $\textit{i})$  implies that $\mathscr H_{-1}$ is the topological dual of $\mathscr H_{1}$ with respect to $L^2(\pi)$, and property $\textit{ii})$ entails that the inner product $\left\langle \;,\;  \right\rangle_{\pi}$ can be extended to a continuous bilinear form on $\mathscr H_{-1} \times \mathscr H_{1}$. The preceding results remain in force when $L^2(\pi)$ is replaced by any Hilbert space.

Observe that we can express  the central limit theorem variance in terms of the norms defined above. Indeed, $\sigma^2(V,\pi)$ is equal to
\begin{equation}
\label{H_{-1}var}
2||V||_{-1} \ \  \textit{if} \ \ V \ \   \textit{is in the range of} \ \  (-\mathcal L)^{-\frac{1}{2}}\;,
\end{equation}
or
\begin{equation}
\label{dirichletvar}
2||U||_{1}  \ \  \textit{if} \ \ V=\mathcal L U \ \  \textit{for some} \ \  U \in D(\mathcal L)\;.
\end{equation}

Now we proceed to see how to take advantage of the preceding general results in our context.  Let $L_N$ be the generator defined by
\begin{align*}
L_N(f)&=\frac{1}{2}\sum_{-N\leq x\leq x+1\leq N}X_{x,x+1}[a(p_x,p_{x+1})X_{x,x+1}(f)]\;.
\end{align*}
Note that the sum is no longer periodic.

Let $\mu_{N,y}$ be the uniform measure on the sphere 
$$
\left\{(p_{-N},\cdots,p_N) \in \mathbb R^{2N+1}\;: \sum_{i=-N}^N p_i^2=(2N+1)y^2\right\}\;,
$$
and $D_{N,y}$ the Dirichlet form associated to this measure, which is given by 
\begin{align*}
D_{N,y}(f)&=\frac{1}{2}\sum_{-N\leq x\leq x+1\leq N}\int a(p_x,p_{x+1})[X_{x,x+1}(f)]^2\mu_{N,y}(dp)\;.
\end{align*}
Is not difficult to see that the measures $\mu_{N,y}$ are ergodic for the process with generator $L_N$. 
We are interested in the asymptotic behavior, as $N$ goes to infinity, of the variance
\begin{equation}
\label{thevariance}
\sigma^2(B_N+\widehat{a}(y)A_N-H_N^F\;,\;\mu_{N,y})\;,
\end{equation}
where,
\begin{align*}
A_N(p_{-N},\cdots,p_{N})&=
p^2_{N}-p^2_{-N}\;,
\\
B_N(p_{-N},\cdots,p_{N})&=\sum_{-N\leq x\leq x+1\leq N}W_{x,x+1}\;,\\
H^F_N(p_{-N},\cdots,p_{N})&={\sum_{-N\leq {x-k}\leq {x+k}\leq N}
L_N(\tau^xF)}\;,
\end{align*}
with $F(x_{-k},\cdots,x_{k})$ a smooth function of $2k+1$ variables. 
Observe that these three classes of functions are sums of translations of local functions, and have mean zero with respect to every $\mu_{N,y}$. We introduce $\Delta_{N,y}$ to denote these variances and covariances, for instance, $\Delta_{N,y}(B_N , B_N) =\sigma^2(B_N\;,\mu_{N,y})$  and $\Delta_{N,y}(A_N , H_N^F) =\sigma^2(A_N , H_N^F\;,\;\mu_{N,y})$. The inner product in $L^2(\mu_{N,y})$ is denoted by $\langle \;,\;\rangle_{N,y}$.

Observe that the functions $B_N$ and $H_N^F$ belong to the range of $L_N$, in fact
\begin{align}
\label{B_N}
B_N(p_{-N},\cdots,p_{N})&=L_N(\sum^{N}_{x=-N}xp^2_x)\;,
\\
\label{H_N}
H^F_N(p_{-N},\cdots,p_{N})&=L_N(\psi^F_N)\;,
\end{align}
where,
$$
\psi^F_N\;=\;\sum_{-N\leq x-k\leq x+k\leq N}\tau^xF\;.
$$
This in particular  implies that the central limit theorem variances and covariances involving $B_N$ and $H_N^F$ exist. After (\ref{dirichletvar}) they are also easily computable, which is not the case for $A_N$. 

The first difficulty appearing in adapting the nongradient method to our case is to find a spherical version of telescopic sums. Such a  spherical version is obtained as a consequence of Lemma \ref{divtheorem} stated below.  We also state Lemma \ref{intoverspheres}, which provides a way to evaluate some integrals over spheres. The proofs of these and other interesting results can be founded in \cite{Ba}.
\begin{lemma} 
\label{intoverspheres}
Given $p=(p_1,\cdots,p_n), \textbf{a}=(a_1,\cdots,a_n), a=\sum^n_{k=1}a_k$ 
and $S^{n-1}(r)=\{p \in \mathbb{R}^n:|p|=r\}$ , define
\[
E(p,\textbf{a})=\prod^n_{k=1}(x^2_k)^{a_k} \quad \textit{and} \quad S_n(\textbf{a},r)=\int_{S^{n-1}(r)}E(p,\textbf{a})\ d\sigma_{n-1}
\]
then,
\[
S_n(\textbf{a},r)=\frac{2\prod^n_{k=1}\Gamma(a_k+\frac{1}{2})}{\Gamma(a+\frac{n}{2})}r^{2a+n-1}\;.
\]
Where $d\sigma_{n-1}$ denotes $(n-1)$-dimensional surface measure and $\Gamma$ is  gamma function.
\end{lemma}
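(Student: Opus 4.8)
The plan is to reduce the surface integral to a product of one-dimensional Gaussian integrals via the standard polar-coordinates trick. The first step is to record the scaling behaviour of the integrand: since $E(p,\textbf{a})=\prod_{k=1}^n(p_k^2)^{a_k}$ is positively homogeneous of degree $2a$ and the $(n-1)$-dimensional surface measure on $S^{n-1}(r)$ scales like $r^{n-1}$, one obtains immediately
\[
S_n(\textbf{a},r)=r^{2a+n-1}\,S_n(\textbf{a},1)\;.
\]
Hence it suffices to compute $S_n(\textbf{a},1)$.

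To do this I would evaluate the auxiliary integral $\int_{\mathbb{R}^n}E(p,\textbf{a})\,e^{-|p|^2}\,dp$ in two different ways. On one hand, Fubini factorizes it as $\prod_{k=1}^n\int_{\mathbb{R}}(t^2)^{a_k}e^{-t^2}\,dt$, and the substitution $u=t^2$ identifies each factor with $\Gamma(a_k+\tfrac12)$. On the other hand, passing to polar coordinates $p=r\omega$ with $\omega\in S^{n-1}(1)$, and using the homogeneity noted above, gives
\[
\int_{\mathbb{R}^n}E(p,\textbf{a})\,e^{-|p|^2}\,dp
=S_n(\textbf{a},1)\int_0^\infty r^{2a+n-1}e^{-r^2}\,dr\;,
\]
and the substitution $u=r^2$ shows that the radial integral equals $\tfrac12\,\Gamma(a+\tfrac n2)$. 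Equating the two expressions yields $S_n(\textbf{a},1)=2\prod_{k=1}^n\Gamma(a_k+\tfrac12)\big/\Gamma(a+\tfrac n2)$, and combining this with the scaling identity from the first step gives the claimed formula.

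There is no substantial obstacle here; the proof is essentially a bookkeeping of classical integrals. The only points that deserve a line of care are the finiteness of all integrals involved (which holds provided $a_k+\tfrac12>0$ for every $k$, so that each $\Gamma(a_k+\tfrac12)$ is finite — and this will always be the case in the applications, where the $a_k$ are nonnegative) and the legitimacy of Fubini's theorem together with the polar change of variables, both of which are justified because the integrand $E(p,\textbf{a})\,e^{-|p|^2}$ is nonnegative.
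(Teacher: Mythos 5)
Your argument is correct and is the classical one: the paper itself gives no proof of this lemma but defers to the cited reference of Baker, which establishes the formula by exactly the same device of evaluating $\int_{\mathbb{R}^n}E(p,\textbf{a})e^{-|p|^2}dp$ both by Fubini and in polar coordinates, combined with the homogeneity scaling $S_n(\textbf{a},r)=r^{2a+n-1}S_n(\textbf{a},1)$. All the integral computations check out, and your remarks on finiteness and the use of Tonelli for the nonnegative integrand are the right points of care.
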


\begin{corollary}
\label{duality}
There exist a constant $C$ depending on $y$ and the lower bound of $a(\cdot,\cdot)$ such that, for every $u \in D(L)$
\begin{equation*}
\left|\left\langle u,A_N \right\rangle_{N,y}\right| \leq C(2N)^{\frac{1}{2}}{D_{N,y}(u)}^{\frac{1}{2}}\;.
\end{equation*}
\end{corollary}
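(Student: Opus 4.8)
The goal is an estimate of the form $|\langle u, A_N\rangle_{N,y}| \leq C(2N)^{1/2} D_{N,y}(u)^{1/2}$, which says, roughly, that the asymptotic $\mathscr H_{-1}$-norm of the (non-gradient) function $A_N = p_N^2 - p_{-N}^2$ is controlled by $\sqrt{N}$. The natural strategy is to produce a \emph{spherical telescopic representation} of $A_N$: write $p_N^2 - p_{-N}^2 = \sum_{x=-N}^{N-1} (p_{x+1}^2 - p_x^2)$ and then, on the sphere, rewrite each increment $p_{x+1}^2 - p_x^2$ in terms of the vector fields $X_{x,x+1}$ (which are exactly the directions entering the Dirichlet form $D_{N,y}$) plus a correction coming from the geometry of the sphere. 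This is precisely what the ``spherical version of telescopic sums'' alluded to just before the statement should give — i.e.\ an identity of the type $p_{x+1}^2 - p_x^2 = X_{x,x+1}(g_x) + (\text{divergence-type term})$ on $S^{2N}((2N+1)^{1/2}y)$, obtained by applying the divergence theorem on the sphere (Lemma~\ref{divtheorem}). I would first make this identity explicit, with $g_x$ a bounded local function (essentially $g_x = -2 p_x p_{x+1}$ or a close variant), keeping careful track of the extra curvature term.

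Once $A_N$ is written as $\sum_x X_{x,x+1}(g_x)$ plus an error, the first main term is handled by duality in the standard way: for $u \in D(L)$,
\begin{align*}
\left|\left\langle u, \sum_x X_{x,x+1}(g_x)\right\rangle_{N,y}\right|
  = \left|\sum_x \left\langle X_{x,x+1}(u), g_x\right\rangle_{N,y}\right|
  \leq \sum_x \left(\int |X_{x,x+1}(u)|^2\,d\mu_{N,y}\right)^{1/2}\left(\int g_x^2\,d\mu_{N,y}\right)^{1/2},
\end{align*}
using that $X_{x,x+1}$ is skew-adjoint in $L^2(\mu_{N,y})$. Bounding $\int g_x^2\,d\mu_{N,y}$ by a constant depending on $y$ via Lemma~\ref{intoverspheres} (here one needs $\mu_{N,y}$-moments of $p_x^2 p_{x+1}^2$, which are uniformly bounded in $N$ by the explicit $\Gamma$-function formula), and then applying Cauchy--Schwarz over the $2N$ values of $x$ together with $a(\cdot,\cdot)\geq c$, gives
$$
\sum_x \left(\int |X_{x,x+1}(u)|^2\,d\mu_{N,y}\right)^{1/2} \cdot C(y)
  \leq C(y)(2N)^{1/2}\left(\sum_x \int |X_{x,x+1}(u)|^2\,d\mu_{N,y}\right)^{1/2}
  \leq \frac{C(y)}{\sqrt c}(2N)^{1/2} D_{N,y}(u)^{1/2},
$$
which is exactly the claimed bound. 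The constant depends only on $y$ and the lower bound $c$ of $a$, as asserted.

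The remaining issue is the curvature correction term produced by the spherical telescoping. I expect this to be the main obstacle: unlike the flat case, where $p_{x+1}^2 - p_x^2$ telescopes exactly into a sum of $X$-increments, on the sphere one picks up extra terms coming from the second fundamental form / the radial constraint, and one must check that these terms are again of the form $X_{z,z+1}(\tilde g_z)$ with bounded coefficients, or otherwise directly estimable by $D_{N,y}(u)^{1/2}$ with the same $(2N)^{1/2}$ weight. The key point is that the functions $p_x p_{x+1}$ and their relatives have $\mu_{N,y}$-moments that are $O(1)$ in $N$ — this is the content of Lemma~\ref{intoverspheres} — so provided the correction is expressed through finitely many (per site) such local functions acted on by the $X_{z,z+1}$, the same Cauchy--Schwarz argument absorbs it. I would therefore organize the proof as: (1) derive the spherical telescopic identity from Lemma~\ref{divtheorem}, isolating a clean expression $A_N = \sum_z X_{z,z+1}(G_z)$ with $\sup_z \|G_z\|_{L^2(\mu_{N,y})} \leq C(y)$ uniformly in $N$ (this absorbs the curvature term); (2) integrate against $u$, move $X_{z,z+1}$ by skew-adjointness; (3) apply Cauchy--Schwarz twice and use $a \geq c$ to land on $D_{N,y}(u)^{1/2}$, with the $(2N)^{1/2}$ coming from the number of bonds.
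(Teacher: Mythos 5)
Your proposal is correct and takes essentially the same route as the paper: the paper writes $A_N=\sum_{x=-N}^{N-1}X_{x,x+1}(p_xp_{x+1})$, integrates by parts against $u$ using skew-adjointness of $X_{x,x+1}$ in $L^2(\mu_{N,y})$, and applies Cauchy--Schwarz with the weights $a(p_x,p_{x+1})^{\pm 1}$, the $(2N)^{1/2}$ coming from the uniform moment bound of Corollary \ref{spheres1}. The ``curvature correction'' you flag as the main obstacle never appears: $X_{x,x+1}(p_xp_{x+1})=p_{x+1}^2-p_x^2$ is an exact pointwise identity in $\mathbb{R}^{2N+1}$ (the rotation fields are tangent to every sphere), so the telescoping is exact and no appeal to Lemma \ref{divtheorem} is needed for this corollary.
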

\begin{proof}
Observe that 
\begin{equation*}
A_N(p_{-N},\cdots,p_{N})
=\sum^{N-1}_{x=-N}X_{x,x+1}(p_xp_{x+1})\;.
\end{equation*}
Integrating by parts and applying Schwarz inequality we obtain
\begin{align*}
\left|\left\langle u,A_N \right\rangle_{N,y}\right|
                                 &=\left|\sum^{N-1}_{x=-N}\int X_{x,x+1}(u)p_xp_{x+1}\mu_{N,y}(dp)\right|\\
                                 &\leq \int \!\! \left(\sum^{N-1}_{x=-N}a(p_x,p_{x+1})(X_{x,x+1}(u))^2\right)^{\frac{1}{2}}\!\! \left(\sum^{N-1}_{x=-N}\frac{|p_xp_{x+1}|^2}{a(p_x,p_{x+1})}\right)^{\frac{1}{2}} \!\! \!\! \mu_{N,y}(dp)\;,                  
\end{align*}
which implies the desired result.
\end{proof}
As a consequence of Corollary \ref{duality} we have that the central limit theorem variances and covariances involving $A_N$  exist. In spite of that, the core of the problem will be to deal with the variance of $A_N$ which is not easily computable.

\begin{corollary}
\label{spheres1}
\begin{equation*}
\int \sum^{N-1}_{i=-N} p^2_ip^2_{i+1}\mu_{N,y}(dp)=\frac{2N(2N+1)^2}{(2N+3)(2N+1)}y^4\;.
\end{equation*}
\end{corollary}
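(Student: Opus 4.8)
The plan is to reduce the sum to a single term by symmetry, and then evaluate that term as a ratio of two surface integrals using Lemma \ref{intoverspheres}. First I would note that $\mu_{N,y}$ is invariant under permutations of the $2N+1$ coordinates $(p_{-N},\dots,p_N)$, so each summand $\int p_i^2p_{i+1}^2\,d\mu_{N,y}$ has the same value; since the index runs over $-N\le i\le N-1$, there are exactly $2N$ of them, and it suffices to compute $\int p_0^2p_1^2\,d\mu_{N,y}$ and multiply by $2N$.

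Next I would write this expectation as a quotient of surface integrals over the sphere $S^{2N}(r)$ with $n=2N+1$ and $r^2=(2N+1)y^2$, namely
\[
\int p_0^2p_1^2\,d\mu_{N,y}\;=\;\frac{S_n(\mathbf{a},r)}{S_n(\mathbf{0},r)}\;,
\]
where $\mathbf{a}$ is the multi-index with two entries equal to $1$ and the rest equal to $0$ (so $a=2$), and $\mathbf{0}$ is the zero multi-index. Applying Lemma \ref{intoverspheres} to both, the powers of $r$ combine to $r^{2a}=r^4$, the factors $\Gamma(\tfrac12)$ coming from the zero entries cancel, and what remains is
\[
\int p_0^2p_1^2\,d\mu_{N,y}\;=\;\frac{\Gamma(\tfrac32)^2}{\Gamma(\tfrac12)^2}\cdot\frac{\Gamma(n/2)}{\Gamma(2+n/2)}\,r^4\;.
\]
Using $\Gamma(\tfrac32)=\tfrac12\Gamma(\tfrac12)$ and the recursion $\Gamma(2+n/2)=(1+\tfrac n2)\tfrac n2\,\Gamma(n/2)$, the first fraction is $\tfrac14$ and the second is $\tfrac{4}{n(n+2)}$, so with $n=2N+1$ and $r^4=(2N+1)^2y^4$ this equals $\dfrac{(2N+1)^2y^4}{(2N+1)(2N+3)}=\dfrac{(2N+1)y^4}{2N+3}$.

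Finally I would multiply by $2N$ to obtain $\dfrac{2N(2N+1)}{2N+3}y^4$, which is exactly the right-hand side stated in the corollary (after cancelling the spurious common factor $2N+1$). There is no real obstacle here: the argument is entirely routine once Lemma \ref{intoverspheres} is in hand, and the only point requiring a bit of care is the bookkeeping — correctly identifying $n=2N+1$ and $r^2=(2N+1)y^2$, counting $2N$ (not $2N+1$) equal summands, and tracking the gamma-function cancellations.
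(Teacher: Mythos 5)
Your proposal is correct and follows exactly the route the paper intends: the corollary is stated without proof immediately after Lemma \ref{intoverspheres}, and the intended argument is precisely the one you give (permutation invariance of $\mu_{N,y}$ reduces the sum to $2N$ copies of $\int p_0^2p_1^2\,d\mu_{N,y}$, which is then the ratio $S_n(\mathbf a,r)/S_n(\mathbf 0,r)$ with $n=2N+1$, $r=y\sqrt{2N+1}$). Your gamma-function bookkeeping checks out and reproduces the stated right-hand side, which the paper simply left uncancelled.
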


\begin{lemma}\textit{(Divergence Theorem)}
\label{divtheorem}
Let $B^n(r)=\{p \in \mathbb {R}^n:|p| \leq r\}$ and $S^{n-1}(r)=\{p \in \mathbb {R}^n:|p| = r\}$. Then for every continuously differentiable function $f:\mathbb {R}^n \to \mathbb {R}$ we have,
\[
r\int_{B^n(r)}\frac{\partial f}{\partial p_i}(p)dp=\int_{S^{n-1}(r)}f(s_1,\cdots,s_n)s_i \ d\sigma_{n -1}.
\]
%where $d\sigma_{N-1}$ denotes surface integration over the sphere.
\end{lemma}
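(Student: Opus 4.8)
The plan is to obtain this identity as an immediate consequence of the classical Gauss--Green (divergence) theorem applied to a particular vector field. Recall that for a bounded open set $\Omega \subset \mathbb{R}^n$ with $C^1$ boundary and a vector field $G \in C^1(\overline{\Omega};\mathbb{R}^n)$ one has $\int_\Omega \operatorname{div} G\, dp = \int_{\partial\Omega} G\cdot \nu\, d\sigma$, where $\nu$ denotes the outward unit normal.

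First I would fix the index $i$, take $\Omega = B^n(r)$, whose boundary is the sphere $S^{n-1}(r)$, and consider the vector field $G(p) = f(p)\, e_i$, where $e_i$ is the $i$-th canonical basis vector of $\mathbb{R}^n$. Since $f$ is continuously differentiable on all of $\mathbb{R}^n$, the hypotheses of the divergence theorem are satisfied (in particular $f\in C^1$ on a neighbourhood of the closed ball), and for this choice one has $\operatorname{div} G = \partial f/\partial p_i$ on $B^n(r)$. Next I would identify the boundary integrand: at a point $s\in S^{n-1}(r)$ the outward unit normal to the ball is $\nu(s) = s/r$, so its $i$-th component is $\nu_i(s) = s_i/r$, whence $G(s)\cdot \nu(s) = f(s)\, s_i/r$. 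Substituting into Gauss--Green gives
\[
\int_{B^n(r)} \frac{\partial f}{\partial p_i}(p)\, dp \;=\; \int_{S^{n-1}(r)} f(s_1,\dots,s_n)\,\frac{s_i}{r}\, d\sigma_{n-1},
\]
and multiplying both sides by $r$ yields exactly the stated formula.

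There is essentially no obstacle here: the only points to verify are that $B^n(r)$ is a domain regular enough for Gauss--Green (it is, its boundary being a smooth sphere) and that the outward normal to a sphere centred at the origin is the radial direction $s/r$, both of which are standard. If a self-contained argument is preferred over invoking the classical theorem, one can instead split coordinates as $p = (p_i,p')$ with $p'\in\mathbb{R}^{n-1}$, integrate $\partial f/\partial p_i$ first in $p_i$ over the chord $\{\,p_i : p_i^2 \le r^2 - |p'|^2\,\}$ by the fundamental theorem of calculus, and then recognise the two resulting boundary contributions, after the change of variables parametrising the upper and lower hemispheres as graphs over the disc $\{|p'| \le r\}$, as the surface integral $\int_{S^{n-1}(r)} f s_i\, d\sigma_{n-1}/r$; the Jacobian factor $\big(1+|\nabla_{p'}(\text{graph})|^2\big)^{1/2}$ combines with the $\pm\sqrt{r^2-|p'|^2}$ from the chord endpoints to produce precisely $|s_i|$ times the sphere's area element divided by $r$, with the correct sign. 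Invoking the divergence theorem directly is the cleaner route, so that is what I would write up.
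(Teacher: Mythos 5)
Your proof is correct: it is the standard application of the classical Gauss--Green theorem to the vector field $f e_i$ on the ball, using that the outward normal to $S^{n-1}(r)$ is $s/r$. The paper gives no proof of this lemma and simply cites \cite{Ba}, whose argument is exactly this one, so your approach matches the intended source.
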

\begin{corollary}
\label{telescopicorollary}
Taking $r=y\sqrt{2N+1}$ in Lemma \ref{divtheorem}, we have for $-N\leq i,j\leq N$
$$
\int X_{i,j}(f)p_ip_{j}\mu_{N,y}(dp) =
\frac{r}{|S^{2N}(r)|} \int_{B^{2N+1}(r)}\left(p_i\frac{\partial f}{\partial p_i}-p_j\frac{\partial f}{\partial p_j}\right)dp\;,
$$
where $X_{i,j}=p_j\frac{\partial}{\partial p_i}-p_i\frac{\partial}{\partial p_j}$.
\end{corollary}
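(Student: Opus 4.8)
The plan is to reduce the identity to the divergence theorem (Lemma \ref{divtheorem}) applied with $n=2N+1$ and $r=y\sqrt{2N+1}$, after rewriting $X_{i,j}(f)\,p_ip_j$ in a form to which that lemma applies cleanly. First, recall that $\mu_{N,y}$ is by definition the normalized surface measure on the sphere $S^{2N}(r)=\{\,|p|=r\,\}\subset\mathbb R^{2N+1}$ with $r=y\sqrt{2N+1}$, so that $\int g\,\mu_{N,y}(dp)=|S^{2N}(r)|^{-1}\int_{S^{2N}(r)}g\,d\sigma_{2N}$; it is therefore enough to prove the corresponding identity for the unnormalized surface integral. The proof then has two ingredients: a Leibniz-rule computation that trades $p_ip_jX_{i,j}(f)$ for a multiplication operator applied to $f$, and the divergence theorem applied coordinatewise.

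For the first ingredient I would use the rotational invariance of the sphere. Writing $X_{i,j}(p_ip_jf)=X_{i,j}(p_ip_j)\,f+p_ip_j\,X_{i,j}(f)$ and computing $X_{i,j}(p_ip_j)=p_j\partial_{p_i}(p_ip_j)-p_i\partial_{p_j}(p_ip_j)=p_j^2-p_i^2$, one obtains
\[
p_ip_j\,X_{i,j}(f)=X_{i,j}(p_ip_jf)-(p_j^2-p_i^2)\,f\;.
\]
Since $X_{i,j}$ generates the rotation in the $(p_i,p_j)$-plane, which preserves $S^{2N}(r)$ and its surface measure, $\int_{S^{2N}(r)}X_{i,j}(g)\,d\sigma_{2N}=0$ for every $C^1$ function $g$; applying this with $g=p_ip_jf$ yields
\[
\int_{S^{2N}(r)}p_ip_j\,X_{i,j}(f)\,d\sigma_{2N}=\int_{S^{2N}(r)}(p_i^2-p_j^2)\,f\,d\sigma_{2N}\;.
\]
(The same identity can instead be obtained by integrating by parts directly on the sphere; passing through the rotation invariance is merely a shortcut that avoids the second derivatives a crude term-by-term application of Lemma \ref{divtheorem} to $p_ip_j^2\partial_{p_i}f-p_i^2p_j\partial_{p_j}f$ would produce.)

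For the second ingredient I would apply Lemma \ref{divtheorem} twice: once to the $C^1$ function $p_if$ with the index $i$, giving $\int_{S^{2N}(r)}p_i^2f\,d\sigma_{2N}=r\int_{B^{2N+1}(r)}\partial_{p_i}(p_if)\,dp=r\int_{B^{2N+1}(r)}(f+p_i\partial_{p_i}f)\,dp$, and once to $p_jf$ with the index $j$; subtracting cancels the $\int_{B^{2N+1}(r)}f\,dp$ terms and leaves
\[
\int_{S^{2N}(r)}(p_i^2-p_j^2)\,f\,d\sigma_{2N}=r\int_{B^{2N+1}(r)}\left(p_i\frac{\partial f}{\partial p_i}-p_j\frac{\partial f}{\partial p_j}\right)dp\;.
\]
Combining this with the previous display and dividing by $|S^{2N}(r)|$ gives the claimed formula. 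There is essentially no obstacle: the only points requiring a little care are the justification that $\int_{S^{2N}(r)}X_{i,j}(g)\,d\sigma_{2N}=0$ (equivalently, the integration by parts on the sphere) and keeping track of which index of $X_{i,j}$, and of the divergence theorem, is paired with which coordinate.
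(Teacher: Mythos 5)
Your proof is correct and follows the route the paper intends: the corollary is presented as a direct consequence of Lemma \ref{divtheorem}, and your two applications of that lemma (to $p_if$ paired with the coordinate $i$ and to $p_jf$ paired with $j$), combined with the vanishing of $\int_{S^{2N}(r)}X_{i,j}(p_ip_jf)\,d\sigma_{2N}$ by rotation invariance of the surface measure, yield exactly the stated identity after normalizing by $|S^{2N}(r)|$. The only cosmetic difference from the most literal reading of the corollary is that one could instead apply Lemma \ref{divtheorem} term by term to $p_ip_j\partial_{p_i}f$ (with coordinate $j$) and $p_ip_j\partial_{p_j}f$ (with coordinate $i$) and let the mixed second derivatives cancel; your detour through rotation invariance achieves the same cancellation while only requiring $f\in C^1$.
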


Corollary \ref{telescopicorollary} is extremely useful for us, because it provides a way to perform telescopic sums over the sphere. In fact, it implies that given $-N \leq i < j \leq N$ we have
\begin{equation}
\label{telescopic}
\int X_{i,j}(f)p_ip_{j}\mu_{N,y}(dp)= \int \sum_{k=i}^{j-1} X_{k,k+1}(f)p_kp_{k+1}\mu_{N,y}(dp)\;.
\end{equation}
We should stress the fact that equality of the integrands is false, which is not the case in the planar setting.

Now we return to the study of $A_N$,$B_N$ and $H_N^F$. The next proposition entails the asymptotic behavior, as $N$ goes to infinity, of the central limit theorem variances and covariances involving $B_N$ or $H_N^F$, besides an estimate in the case of $A_N$.
\begin{theorem}
\label{thecovariances}
The following limits hold locally uniformly in $y > 0$.
\begin{align*}
\textit{i})\quad\quad&\lim_{N\to \infty}\frac{1}{2N}\Delta_{N,y}(H^F_N,H^F_N)\;=\;\textbf{E}_{\nu_y}[a(p_0,p_1)[X_{x,x+1}(\widetilde{F})]^2]\;,\\
\textit{ii})\quad\quad&\lim_{N\to \infty}\frac{1}{2N}\Delta_{N,y}(B_N,B_N)\;=\;\textbf{E}_{\nu_y}[a(p_0,p_1)(2p_0p_1)^2]\;,\\
\textit{iii})\quad\quad&\lim_{N\to \infty}\frac{1}{2N}\Delta_{N,y}(B_N,H^F_N)\;=\;-\textbf{E}_{\nu_y}[2p_0p_1a(p_0,p_1)X_{x,x+1}(\widetilde{F})]\;,\\
\textit{iv})\quad\quad&\lim_{N\to \infty}\frac{1}{2N}\Delta_{N,y}(A_N,B_N)\;=\;-\textbf{E}_{\nu_y}[(2p_0p_1)^2]\;,\\
\textit{v})\quad\quad&\lim_{N\to \infty}\frac{1}{2N}\Delta_{N,y}(A_N,H^F_N)\;=\; 0\;,\\
\textit{vi})\quad\quad&\limsup_{N\to \infty}\frac{1}{2N}\Delta_{N,y}(A_N,A_N)\;\leq\; C\;,
\end{align*}
where $\widetilde{F}$ is formally defined by
$$
\widetilde{F}(p)=\sum_{j=-\infty}^{\infty}\tau^jF(p)
$$
and $C$ is a positive constant depending uniformly on $y$. Although $\widetilde{F}$ do not really make sense, the gradients $X_{i,i+1}(\widetilde{F})$ are all well defined.
\end{theorem}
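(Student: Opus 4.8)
The plan is to reduce each variance and covariance to an inner product in $L^2(\mu_{N,y})$ — using that $B_N$ and $H^F_N$ lie in the range of $L_N$ by (\ref{B_N})--(\ref{H_N}) — and then to identify the limits by combining the permutation invariance of $\mu_{N,y}$ with the equivalence of ensembles of Appendix \ref{equiensem}. Set $G_N=\sum_{z=-N}^{N}z\,p_z^2$, so that $B_N=L_N G_N$ and $H^F_N=L_N\psi^F_N$. Recall from (\ref{dirichletvar}) and its bilinear polarization that $\Delta_{N,y}(L_NU,L_NW)=2\langle U,(-L_N)W\rangle_{N,y}$, and that an integration by parts (each $X_{x,x+1}$ being skew-adjoint for $\mu_{N,y}$) turns the right side into $\sum_{x}\int a(p_x,p_{x+1})\,X_{x,x+1}(U)\,X_{x,x+1}(W)\,d\mu_{N,y}$; also, when only one entry is an exact form, $\Delta_{N,y}(A_N,L_NW)=-2\langle A_N,W\rangle_{N,y}$, which is meaningful because $A_N\in\mathscr H_{-1}$ by Corollary \ref{duality}. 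A direct computation gives $X_{x,x+1}(G_N)=-2p_xp_{x+1}$ for every admissible bond, and $X_{x,x+1}(\psi^F_N)=\tau^x(X_{0,1}\widetilde F)$ for all $x$ outside a set of $O(k)$ indices near $\pm N$, where $X_{0,1}\widetilde F=\sum_j X_{0,1}(\tau^jF)$ is the finite sum in the statement.

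For \textit{i}), \textit{ii}) and \textit{iii}) I would substitute these gradients into the bilinear form above. Each resulting sum over $x$ is, apart from $O(k)$ boundary terms bounded uniformly in $N$ (since $a$ is bounded and $F$ is Schwartz), a sum of $2N+O(1)$ integrals that are all equal by permutation invariance of $\mu_{N,y}$: respectively $\int a(p_0,p_1)(2p_0p_1)^2\,d\mu_{N,y}$ for \textit{ii}), $\int a(p_0,p_1)[X_{0,1}\widetilde F]^2\,d\mu_{N,y}$ for \textit{i}), and $-\int a(p_0,p_1)\,2p_0p_1\,X_{0,1}\widetilde F\,d\mu_{N,y}$ for \textit{iii}). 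Dividing by $2N$ kills the boundary contributions and leaves a single integral, in which the equivalence of ensembles replaces $\mu_{N,y}$ by $\nu_y$ locally uniformly in $y$; by translation invariance of $\nu_y$ the limit does not depend on the chosen bond, which yields \textit{i})--\textit{iii}).

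For \textit{iv}) I would start from $\Delta_{N,y}(A_N,B_N)=-2\langle A_N,G_N\rangle_{N,y}$ and the identity $A_N=\sum_{x=-N}^{N-1}X_{x,x+1}(p_xp_{x+1})$ from the proof of Corollary \ref{duality}; an integration by parts together with $X_{x,x+1}(G_N)=-2p_xp_{x+1}$ gives $\langle A_N,G_N\rangle_{N,y}=2\sum_{x=-N}^{N-1}\int p_x^2p_{x+1}^2\,d\mu_{N,y}$, which Corollary \ref{spheres1} evaluates exactly, so that after division by $2N$ the limit is $-4y^4=-\textbf{E}_{\nu_y}[(2p_0p_1)^2]$, locally uniformly in $y$. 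For \textit{vi}), Corollary \ref{duality} combined with part \textit{i}) of Lemma \ref{H_1H_{-1}norms} gives $\|A_N\|_{-1}\le C(2N)^{1/2}$, hence $\Delta_{N,y}(A_N,A_N)=2\|A_N\|_{-1}^2\le 2C^2(2N)$ and $\frac{1}{2N}\Delta_{N,y}(A_N,A_N)\le 2C^2$, with $C$ depending only on the ellipticity constant of $a$ and, through compact sets, on $y$.

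The genuinely delicate point is \textit{v}). Here $\Delta_{N,y}(A_N,H^F_N)=-2\langle A_N,\psi^F_N\rangle_{N,y}=-2\int(p_N^2-p_{-N}^2)\,\psi^F_N\,d\mu_{N,y}$, and the soft bound $|\langle A_N,\psi^F_N\rangle_{N,y}|\le\|A_N\|_{-1}\|\psi^F_N\|_1=O(N)$ from Lemma \ref{H_1H_{-1}norms} is of no use after dividing by $2N$. Instead one exploits an exact cancellation: $\mu_{N,y}$ is invariant under the transposition of the coordinates $p_N$ and $p_{-N}$; this transposition fixes $\tau^xF$ whenever its support $\{x-k,\dots,x+k\}$ avoids $\{-N,N\}$, while it maps $p_N^2-p_{-N}^2$ to $-(p_N^2-p_{-N}^2)$, so $\int(p_N^2-p_{-N}^2)\,\tau^xF\,d\mu_{N,y}=0$ for every such $x$. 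Only the $O(k)$ terms whose support meets $\{-N,N\}$ can contribute, and each is bounded by $\|F\|_\infty\int(p_N^2+p_{-N}^2)\,d\mu_{N,y}=2\|F\|_\infty y^2$. Hence $|\langle A_N,\psi^F_N\rangle_{N,y}|$ stays bounded uniformly in $N$ (and locally uniformly in $y$), and $\frac{1}{2N}\Delta_{N,y}(A_N,H^F_N)\to 0$. This structural cancellation — together with the use of the exact spherical identity of Corollary \ref{spheres1} in \textit{iv}) in place of an asymptotic argument — is the only real obstacle; the rest of the proof is bookkeeping plus the equivalence of ensembles.
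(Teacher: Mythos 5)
Your proof is correct, and for items \textit{i})--\textit{iv}) and \textit{vi}) it is essentially the paper's argument: reduce to the Dirichlet form via (\ref{B_N})--(\ref{H_N}) and (\ref{dirichletvar}), invoke the rotation/permutation invariance of $\mu_{N,y}$ and the equivalence of ensembles of Appendix \ref{equiensem} for the bulk terms, use Corollary \ref{spheres1} for the exact evaluation in \textit{iv}), and the duality bound of Corollary \ref{duality} together with Lemma \ref{H_1H_{-1}norms} for \textit{vi}). The one place where you genuinely depart from the paper is item \textit{v}). The paper handles it with the spherical telescoping identity (\ref{telescopic}) from Corollary \ref{telescopicorollary}, which converts $\langle A_N,\psi^F_N\rangle_{N,y}$ into $\sum_{x}\int p_xp_{x+1}X_{x,x+1}(\psi^F_N)\,d\mu_{N,y}$; after the equivalence of ensembles this is $2N\,\textbf{E}_{\nu_y}[p_0p_1X_{0,1}(\widetilde{F})]+o(N)$, so one must still use the orthogonality $\textbf{E}_{\nu_y}[p_0p_1X_{0,1}(\widetilde{F})]=0$ (property \textit{ii}) of Section \ref{{H}_y}) to conclude. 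You instead exploit the invariance of $\mu_{N,y}$ under the transposition $p_N\leftrightarrow p_{-N}$, which makes $\int(p_N^2-p_{-N}^2)\,\tau^xF\,d\mu_{N,y}$ vanish exactly for every $x$ whose support avoids $\{\pm N\}$, leaving only the two boundary terms $x=\pm(N-k)$; this yields the stronger finite-$N$ bound $\langle A_N,\psi^F_N\rangle_{N,y}=O(1)$ with no appeal to the telescoping lemma, the equivalence of ensembles, or the limiting orthogonality relation. Both routes are valid; yours is more elementary and quantitative on this item, while the paper's keeps \textit{v}) inside the same asymptotic template as \textit{i})--\textit{iv}) and makes visible the mean-zero property of $X_{0,1}(\widetilde{F})$ that later underlies the characterization of $\mathcal{H}_y$.
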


\begin{proof}
\textit{i})
From (\ref{H_N}) and (\ref{dirichletvar}) we have that
\begin{align*}
\Delta_{N,y}(H^F_N,H^F_N)&=2D_{N,y}(\psi^F_N)\\
                         &=\int\sum^{N-1}_{x=-N}a(p_x,p_{x+1})[X_{x,x+1}(\psi^F_N)]^2 \mu_{N,y}(dp)\;.
\end{align*}
The sum in the last line can be broken into two sums, the first one considering the indexes in $\{-N+2k,\cdots,N-2k-1\}$ and the second one considering the indexes in the complement with respect to $\{-N,\cdots,N-1\}$. 
From the conditions imposed over $F$, when divided by $N$, the term corresponding to the second sum tends to zero as $N$ goes to infinity. Then,
\begin{align*}
\lim_{N\to \infty}\frac{1}{2N}\Delta_{N,y}(H^F_N,H^F_N)&=
\lim_{N\to \infty}\int\frac{1}{2N}\sum^{N-1}_{x=-N}a(p_x,p_{x+1})[X_{x,x+1}(\psi^F_N)]^2 \mu_{N,y}(dp)\\
&=\lim_{N\to \infty}\int\frac{1}{2N}\sum^{N-1}_{x=-N}\tau^xg(p) \mu_{N,y}(dp)\;,
\end{align*}
where $g(p)=a(p_0,p_1)[X_{x,x+1}(\widetilde{F})]^2$ and 
\[
\widetilde{F}(p)=\sum_{j=-\infty}^{\infty}\tau^jF(p).
\]

The desired result comes from the rotation invariance of $\mu_{N,y}$ together with the equivalence of ensembles stated in Appendix \ref{equiensem}.

\textit{ii}) 
This is proved in the same way as \textit{i}) by using property (\ref{B_N}).

\textit{iii})
This is proved in the same way as \textit{i}) by using property (\ref{H_N}) and the fact that $W_{x,x+1}=-X_{x,x+1}[p_xp_{x+1}a(p_x,p_{x+1})]$.

\textit{iv}) 
This is proved in the same way as \textit{i}) by using property (\ref{B_N}), the fact that $A_N = X_{N,-N}(p_Np_{-N})$ and Corollary \ref{spheres1}.

\textit{v})
This is proved by the same arguments used in the preceding items together with the telescopic sum obtained in (\ref{telescopic}).

\textit{vi}) 
By duality ({\em {c.f}} Lemma \ref{H_1H_{-1}norms}) $\Delta_{N,y}(A_N,A_N)= 2c^2 $, where $c$ is the smallest constant such that for every $u \in D(L)$,
\begin{equation}
\label{duality2}
\left|\left\langle u,A_N \right\rangle_{N,y}\right| \leq c{D_{N,y}(u)}^{\frac{1}{2}}\;.
\end{equation} 
Recall that Corollary \ref{duality} ensures the existence of a constant $C$ depending locally uniformly on $y$, such that  $C(2N)^{\frac{1}{2}}$ satisfies (\ref{duality2}) for every $u \in D(L)$. Therefore, $c$ is smaller than $C(2N)^{\frac{1}{2}}$ and
$$
\frac{1}{2N}\Delta_{N,y}(A_N,A_N) \leq 
2 C^2 \;,
$$
which concludes the proof of Theorem \ref{thecovariances}.
\end{proof}
We proceed now to calculate the only missing limit variance (the one corresponding to $A_N$) in an indirect way, as follows.

Using the basic inequality 
$$
|\Delta_{N,y}(A_N,B_N - H_N^F)|^2 \leq \Delta_{N,y}(A_N,A_N)\Delta_{N,y}(B_N - H_N^F,B_N - H_N^F)\;,
$$ 
together with Theorem \ref{thecovariances}, we obtain
\begin{equation}
\label{liminfA_N1}
\frac{(4 y^4)^2}{\textbf{E}_{\nu_y}[a(p_0,p_1)\{2p_0p_1+X_{0,1}(\widetilde{F})\}^2]}
\leq 
\liminf_{N \to \infty}\frac{\Delta_{N,y}(A_N,A_N)}{2N}\;.
\end{equation}
Let us define $\hat{a}(y)$ by the relation
\begin{equation}
\label{^a1}
\hat{a}(y)=y^{-4}\inf_{F} a(y,F)\;,
\end{equation}
where the infimum is taken over all local smooth functions, and
\begin{equation}
\label{^a2}
a(y,F)=\textbf{E}_{\nu_y}[a(p_0,p_1)(p_0p_1+X_{0,1}(\widetilde{F}))^2]\;.
\end{equation}
Since the limit appearing in (\ref{liminfA_N1}) does not depend of $F$, we have 
\begin{equation}
\label{lowerbound}
\liminf_{N \to \infty}\frac{\Delta_{N,y}(A_N,A_N)}{2N}\geq 
\frac{4 y^4}{\widehat{a}(y)}\;.
\end{equation}
Moreover, this limit is locally uniform in $y$.
 
We are now ready to state the main result of this section.
\begin{theorem}
\label{covarA_N}
The function $\widehat{a}(y)$ is continuous in $y>0$ and  
\begin{equation}
 \label{covarianceA_N}
\lim_{N\to \infty}\frac{1}{2N}\Delta_{N,y}(A_N,A_N)=\frac{4y^4}{\hat{a}(y)}\;.
\end{equation}
\end{theorem}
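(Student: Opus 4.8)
The lower bound $\liminf_{N\to\infty}\frac{1}{2N}\Delta_{N,y}(A_N,A_N)\ge \frac{4y^4}{\hat a(y)}$ is already available, and locally uniform in $y$, from (\ref{lowerbound}); so the plan is to establish the matching upper bound together with the continuity of $\hat a$. My approach to the upper bound rests on the following approximation, which I regard as the heart of the matter: along the minimizing sequence $\{\tfrac12 F_k\}$ of (\ref{sequencetohat}), the corrected currents $B_N-H^{F_k}_N$ approximate $-\hat a(y)\,A_N$ in the $\mathscr H_{-1}$ norm associated with $\mu_{N,y}$, i.e.
\[
\lim_{k\to\infty}\ \limsup_{N\to\infty}\ \frac{1}{2N}\,\Delta_{N,y}\bigl(B_N-H^{F_k}_N+\hat a(y)A_N\,,\,B_N-H^{F_k}_N+\hat a(y)A_N\bigr)\;=\;0 .
\]
Granting this, I use the bilinearity of $\Delta_{N,y}$ to write
\[
\hat a(y)^2\,\Delta_{N,y}(A_N,A_N)=\Delta_{N,y}\bigl(B_N-H^{F_k}_N+\hat a(y)A_N\,,\,B_N-H^{F_k}_N+\hat a(y)A_N\bigr)-\Delta_{N,y}\bigl(B_N-H^{F_k}_N,B_N-H^{F_k}_N\bigr)-2\hat a(y)\,\Delta_{N,y}\bigl(B_N-H^{F_k}_N,A_N\bigr),
\]
divide by $2N$ and take $\limsup_{N}$. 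By Theorem \ref{thecovariances}(i)--(iii) and (\ref{sequencetohat}), $\frac{1}{2N}\Delta_{N,y}(B_N-H^{F_k}_N,B_N-H^{F_k}_N)\to \mathbf E_{\nu_y}[a(p_0,p_1)(2p_0p_1+X_{0,1}\widetilde{F_k})^2]=4\,a(y,\tfrac12F_k)$, and by items (iv)--(v), $\frac{1}{2N}\Delta_{N,y}(B_N-H^{F_k}_N,A_N)\to -4y^4$. Letting $k\to\infty$ and invoking the displayed approximation together with (\ref{sequencetohat}) once more, the right-hand side tends to $0-4y^4\hat a(y)+8y^4\hat a(y)=4y^4\hat a(y)$, so $\hat a(y)^2\,\limsup_N\frac{1}{2N}\Delta_{N,y}(A_N,A_N)=4y^4\hat a(y)$, i.e. $\limsup_N\frac{1}{2N}\Delta_{N,y}(A_N,A_N)=\frac{4y^4}{\hat a(y)}$; together with (\ref{lowerbound}) this yields (\ref{covarianceA_N}), and since every estimate involved is locally uniform in $y$, so is the limit.

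The main obstacle is the displayed approximation. Note that positivity of $\Delta_{N,y}$ combined with Theorem \ref{thecovariances} only produces inequalities in the direction of the already-known lower bound, so what is really at stake is that the component of $A_N$ orthogonal in $\mathscr H_{-1}$ to all exact forms $H^G_N$ is asymptotically negligible. This is precisely the information supplied by the characterization, carried out in Section \ref{{H}_y}, of the Hilbert space $\mathcal H_y$ of germs of closed forms: in the limit $\mathcal H_y$ is the orthogonal sum of the closure of the exact forms $\{L_N(\tau^x G)\}$ and the one-dimensional subspace generated by the gradient germ of $p_1^2-p_0^2$ — whose telescoped finite-volume sums coincide with $A_N$ by the spherical telescoping identity of Corollary \ref{telescopicorollary} — so that the germ of the current $W$ differs from its best approximation by exact forms by exactly $-\hat a(y)$ times the gradient germ, which is the displayed statement. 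Equivalently, the whole argument may be phrased on the dual side: by Lemma \ref{H_1H_{-1}norms} and (\ref{H_{-1}var}), $\frac{1}{2N}\Delta_{N,y}(A_N,A_N)=\frac1N\sup_u\{2\langle u,A_N\rangle_{N,y}-D_{N,y}(u)\}$, with $\langle u,A_N\rangle_{N,y}=\sum_x\int X_{x,x+1}(u)p_xp_{x+1}\,d\mu_{N,y}$ after integration by parts, and one must insert a near-optimal exact-form corrector into this sum before applying Cauchy--Schwarz against $D_{N,y}(u)$ and the equivalence of ensembles of Appendix \ref{equiensem}. In either formulation, establishing the $\mathcal H_y$ identification is the real work, and it relies on the sharp spectral gap estimate of Appendix \ref{spectralgap} and on the integrability estimates for Poisson systems of Appendix \ref{geomconsider}.

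Finally, the continuity of $\hat a$. For each fixed Schwartz function $F$ the map $y\mapsto a(y,F)=\mathbf E_{\nu_y}[a(p_0,p_1)(p_0p_1+X_{0,1}\widetilde F)^2]$ is continuous on $(0,\infty)$ by dominated convergence — after the rescaling $p=y\,g$ with $g$ standard Gaussian the integrand is continuous in $y$ and, for $y$ ranging over a compact interval, dominated by an integrable function of $g$ — hence $\hat a(y)=y^{-4}\inf_F a(y,F)$ is upper semicontinuous as an infimum of continuous functions. Lower semicontinuity follows from the representation (\ref{covarianceA_N}): for each $N$ the map $y\mapsto \frac{1}{2N}\Delta_{N,y}(A_N,A_N)=\frac{1}{2N}\langle A_N,(-L_N)^{-1}A_N\rangle_{N,y}$ is continuous on $(0,\infty)$, since $\mu_{N,y}$, the coefficient $a$ and the function $A_N$ depend continuously on $y$ and the spectral gap estimate of Appendix \ref{spectralgap} bounds $(-L_N)^{-1}$ on mean-zero functions uniformly for $y$ in compact intervals, while the convergence in (\ref{covarianceA_N}) is locally uniform; consequently $y\mapsto 4y^4/\hat a(y)$ is continuous, and therefore so is $\hat a$ on $(0,\infty)$.
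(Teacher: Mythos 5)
Your overall architecture is sound in outline --- the lower bound is indeed already available from (\ref{lowerbound}), and the whole content of the theorem is the matching upper bound --- but the step you yourself identify as ``the heart of the matter'' is not proved; it is only restated. Expanding your displayed approximation by bilinearity and using Theorem \ref{thecovariances}(i)--(v) together with (\ref{sequencetohat}), one finds
\[
\lim_{k\to\infty}\limsup_{N\to\infty}\frac{1}{2N}\Delta_{N,y}\bigl(B_N-H^{F_k}_N+\hat a(y)A_N\bigr)
\;=\;4y^4\hat a(y)-8y^4\hat a(y)+\hat a(y)^2\limsup_{N\to\infty}\tfrac{1}{2N}\Delta_{N,y}(A_N,A_N)\;,
\]
so its vanishing is \emph{algebraically equivalent} to the upper bound $\limsup_N\frac{1}{2N}\Delta_{N,y}(A_N,A_N)\le 4y^4/\hat a(y)$. (Indeed, in the paper this approximation appears at the end of the proof of Lemma \ref{lemmaboltzmangibbs}, where it is \emph{deduced from} Theorem \ref{covarA_N}, not used to prove it.) Your subsequent algebra is therefore circular unless the approximation is established independently, and the appeal to ``the characterization of $\mathcal H_y$'' does not do this: Theorem \ref{characterizationofH_y} is a sufficient condition for a single $L^2(\nu_y)$ function satisfying the closedness relations (\ref{final4})--(\ref{final5}) to lie in the closure of $\{X_{0,1}(\widetilde F)\}$; it is not the orthogonal decomposition of a space of closed forms into exact forms plus the germ of $p_1^2-p_0^2$ that you invoke, and even granting such a decomposition one must still connect the finite-$N$ variational problem for $\Delta_{N,y}(A_N,A_N)$ to the limiting space. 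That connection is the actual proof in the paper: assuming $l(y)>\theta$, one extracts near-optimizers $u_N$ of the dual formula of Lemma \ref{H_1H_{-1}norms}, normalizes them via the spherical telescoping identity (\ref{telescopic}) to get (\ref{microcondition1})--(\ref{microcondition2}), localizes to blocks by the averaging Lemma \ref{lemmakblock}, passes to weak limits (Lemma \ref{hat{a}lemma2}), controls boundary Dirichlet terms (Lemma \ref{dirichboundterms}), and finally produces the closed form $\xi$ of Lemma \ref{hat{a}lemma3}, to which Theorem \ref{characterizationofH_y} applies and yields $\theta\le 4y^4/\hat a(y)$. None of this construction appears in your proposal, and your ``dual side'' remark (insert an exact-form corrector and apply Cauchy--Schwarz) only reproduces the known lower-bound direction, as you yourself observe.

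A secondary gap concerns continuity. Upper semicontinuity of $\hat a$ as an infimum of continuous functions is fine, but your lower semicontinuity argument assumes that the convergence in (\ref{covarianceA_N}) is locally uniform in $y$; establishing exactly that local uniformity for the upper bound is nontrivial and is the reason the paper works with the joint limsup $l(y)=\limsup_{N\to\infty,\,y'\to y}\frac{1}{2N}\Delta_{N,y'}(A_N,A_N)$ and allows the sequence $y_N\to y$ in the construction of $u_N$. As written, you are using the conclusion (locally uniform convergence to a continuous limit) to prove continuity.
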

\begin{proof}
Let us define
\begin{equation}
l(y)=\limsup_{\substack{N \to \infty \\ y' \to y}}\frac{1}{2N}\Delta_{N,y'}(A_N,A_N)\;.
\end{equation} 
By definition, $\widehat{a}(y)$ and $l(y)$ are upper semicontinuous functions. 

In order to prove (\ref{covarianceA_N}) it is enough to verify the following equality
\begin{equation}
\label{equalityl(y)}
l(y) = \frac{4y^4}{\hat{a}(y)}.
\end{equation}
In fact, from the definition of $l(y)$, it is clear that
$$
\limsup_{\substack{N \to \infty}}\frac{1}{2N}\Delta_{N,y}(A_N,A_N) \leq l(y)\;,
$$
which together with (\ref{lowerbound}) proves (\ref{covarianceA_N}). Moreover, equality (\ref{equalityl(y)}) together with the upper semicontinuity of $l(y)$ gives the lower semicontinuity of $\widehat{a}(y)$. Ending the proof of Theorem \ref{covarA_N}.  

On the other hand, from the upper semicontinuity of $\hat{a}(y)$ together with the lower bound in (\ref{lowerbound}) we obtain $\frac{4y^4}{\hat{a}(y)}\leq l(y)$. Therefore, it remains to check the validity of the opposite inequality, which is equivalent to prove that for  every $\theta \in \mathbb R$ 
\begin{equation}
\label{2inequalityl(y)}
l(y) > \theta \quad \quad \Longrightarrow  \quad \quad \theta \leq \frac{4y^4}{\hat{a}(y)}\;.
\end{equation}
Suppose that $l(y) > \theta$. Then, there exist a sequence $y_N \to y$ such that
$$
\lim_{N \to \infty}\frac{1}{2N} \Delta_{N,y_N}(A_N,A_N)=A>\theta\;.
$$
By $\textit{i})$ in Lemma \ref{H_1H_{-1}norms} we have
$$\{\Delta_{N,y_N}(A_N,A_N)\}^{1/2}\;=\;\sup_{h \in D(L_N) \backslash \{0\}}\frac{\left\langle h, A_N \right\rangle_{N,y_N}}{{D_{N,y}(h)}^{\frac{1}{2}}}\;.
$$
Then, there exist a sequence of smooth functions $\{w_N \}_{N \geq 1}$ such that $w_N\in Dom(L_N)$ and 
$$
{\langle w_N,A_N \rangle}_{N,y_N} >\sqrt{N\theta} {{D}_{N,y_N}(w_N)}^\frac{1}{2}\;.
$$
We can suppose without loss of generality that
\begin{equation}
\label{microzeromean}
{\langle w_N \rangle}_{N,y_N}=0\;.
\end{equation}
We can renormalize by taking $\gamma_N=\frac{1}{y_N^2}\frac{1}{2N}{\langle w_N,A_N\rangle}_{N,y_N}$ and $u_N=\gamma_N^{-1}w_N$ , obtaining a sequence of functions $\{u_N \}_{N \geq 1}$ satisfying
\begin{equation}
\label{microcondition1}
\frac{1}{2N}\int X_{N,-N}(u_N)p_Np_{-N}\mu_{N,y_N}(dp)= y_N^2\;,
\end{equation}
and
\begin{equation}
\label{microcondition2}
\limsup_{N \to \infty}\frac{1}{2N}\int\sum_{-N\leq x\leq x+1\leq N} a(p_x,p_{x+1})[X_{x,x+1}(u_N)]^2\mu_{N,y_N}(dp)\leq \frac{4 y^4}{\theta}\;.
\end{equation}

The aim of Lemma \ref{lemmakblock}, Lemma \ref{hat{a}lemma2} and Lemma \ref{hat{a}lemma3} proved below, is to use the sequence $\{u_N \}_{N \geq 1}$, together with properties (\ref{microcondition1}) and (\ref{microcondition2}), in order to obtain a function $\xi$ satisfying
\begin{align*}
\textit{i})\quad\quad& \textbf{E}_{\nu_y}[\xi]=0\;,\\
\textit{ii})\quad\quad& \textbf{E}_{\nu_y}[p_0p_1\xi]= 0\;,\\
\textit{iii})\quad\quad& \textbf{E}_{\nu_y}[a(p_0,p_1)\{p_0p_1 + \xi\}^2] \leq \frac{4 y^4}{\theta}\;,
\end{align*}
besides an additional condition concerning $X_{i,i+1}\tau^j\xi - X_{j,j+1}\tau^i\xi$ (see Lemma \ref{hat{a}lemma3}).

Condition \textit{iii}) obviously implies
\begin{equation}
\label{boundtheta}
\theta \leq \frac{4 y^4}{\textbf{E}_{\nu_y}[a(p_0,p_1)\{p_0p_1 + \xi\}^2]}\;.
\end{equation}
Rather less obvious is the fact that \textit{i}), \textit{ii}) and the extra condition on $X_{i,i+1}\tau^j\xi - X_{j,j+1}\tau^i\xi$,  imply that $\xi$ belongs to the closure in $L^2(\nu_y)$ of the set over which the infimum in the definition of $\widehat{a}(y)$ is taken (see (\ref{^a1})). The proof of this fact is the content of Section \ref{{H}_y}.

In short, supposing $l(y) > \theta$ we will find a function $\xi$ such that $\textbf{E}_{\nu_y}[a(p_0,p_1)\{p_0p_1 + \xi\}^2] \leq \frac{4 y^4}{\theta}$.  Additionally we will see that such a function belongs to the closure of $\{X_{0,1}(\widetilde{F}): F \; \text{is a local smooth function}\}$.  These two facts imply the left hand side of (\ref{2inequalityl(y)}), finishing the proof of Theorem \ref{covarA_N}.
\end{proof}
Now we state and prove the lemmas concerning the construction of the function $\xi$ endowed with the required properties.
\begin{lemma}
\label{lemmakblock}
Given $\theta > 0$, $k \in \mathbb N$ and a convergent sequence of positive real numbers  $\{y_N\}_{N \geq 1}$ satisfying (\ref{microcondition1}) and (\ref{microcondition2}), there exists a sequence of functions $\{u_N^{(k)}\}_{N \geq 1}$ depending on the variables $p_{-k}, \cdots,p_{k}$ such that
\begin{equation}
\label{microcondition3}
\frac{1}{2k}\int X_{k,-k}(u_N^{(k)})p_kp_{-k}\mu_{N,y_N}(dp)= y_N^2\;,
\end{equation}
and,
\begin{equation}
\label{microcondition4}
\limsup_{N \to \infty}\frac{1}{2k}\int\sum_{-k\leq x\leq x+1\leq k} a(p_x,p_{x+1})[X_{x,x+1}(u_N^{(k)})]^2\mu_{N,y_N}(dp)\leq \frac{4 y^4}{\theta}\;,
\end{equation}
where $y$ is the limit of $\{y_N\}_{N \geq 1}$.
\end{lemma}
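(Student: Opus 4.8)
The idea is to produce the functions $u_N^{(k)}$ by conditioning (averaging) $u_N$ over the "outer" variables $p_{k+1},\dots,p_N$ and $p_{-N},\dots,p_{-k-1}$, and to show that the global conditions \eqref{microcondition1}--\eqref{microcondition2} descend to the $k$-block conditions \eqref{microcondition3}--\eqref{microcondition4}. Two obstacles must be handled: first, the global normalization \eqref{microcondition1} involves $X_{N,-N}(u_N)p_Np_{-N}$, a quantity supported on the extreme coordinates, whereas \eqref{microcondition3} asks for $X_{k,-k}(\cdot)p_kp_{-k}$ on a fixed block; second, the conditional expectation of $u_N$ with respect to $\mu_{N,y_N}$ is taken under a \emph{microcanonical} (spherical) measure, so the marginal on $p_{-k},\dots,p_k$ is not a product Gaussian, and one must control the discrepancy using equivalence of ensembles (Appendix \ref{equiensem}).

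\textbf{Step 1: a telescoping/averaging identity for the normalization.} Using the spherical telescopic sum \eqref{telescopic}, rewrite
$$
\int X_{N,-N}(u_N)p_Np_{-N}\,\mu_{N,y_N}(dp)=\int\sum_{x=-N}^{N-1}X_{x,x+1}(u_N)p_xp_{x+1}\,\mu_{N,y_N}(dp)\;.
$$
Next, exploit translation covariance: for each $j$ the shifted function $\tau^j u_N$ satisfies the same Dirichlet-form bound \eqref{microcondition2} up to boundary terms, and the spatial average
$$
\bar u_N \;=\; \frac{1}{|J_N|}\sum_{j\in J_N}\tau^j u_N\;,
$$
over a window $J_N$ of size $o(N)$ growing to infinity, still obeys \eqref{microcondition2} (convexity of the Dirichlet form) and, by the telescopic identity above, still verifies \eqref{microcondition1} on average. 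This reduces matters to a function whose relevant statistics are essentially translation invariant in the bulk, so that $\int X_{k,-k}(\cdot)p_kp_{-k}\,\mu_{N,y_N}(dp)$ equals, up to $o(2k)$, the quantity $\tfrac{2k}{2N}\int X_{N,-N}(u_N)p_Np_{-N}\,\mu_{N,y_N}(dp)=2k\,y_N^2$.

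\textbf{Step 2: conditioning onto a $k$-block.} Define $u_N^{(k)}:=\mathbb E_{\mu_{N,y_N}}[\,\bar u_N\mid p_{-k},\dots,p_k\,]$. Since conditional expectation is a contraction and the generator $L_N$ restricted to bonds inside $\{-k,\dots,k\}$ commutes appropriately with conditioning on the block, Jensen's inequality gives
$$
\frac{1}{2k}\int\!\!\sum_{-k\le x\le x+1\le k}\!\! a(p_x,p_{x+1})[X_{x,x+1}(u_N^{(k)})]^2\,\mu_{N,y_N}(dp)
\;\le\;
\frac{1}{2k}\int\!\!\sum_{-k\le x\le x+1\le k}\!\! a[X_{x,x+1}(\bar u_N)]^2\,\mu_{N,y_N}(dp)\;,
$$
and the right side is bounded by the full Dirichlet form in \eqref{microcondition2}, hence by $4y^4/\theta$ in the $\limsup$. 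The normalization \eqref{microcondition3} follows from Step 1 together with the fact that $X_{k,-k}(p_kp_{-k})$ and the pair $(p_k,p_{-k})$ depend only on the block variables, so conditioning does not change that integral.

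\textbf{Step 3: the main obstacle.} The delicate point is precisely that these computations take place under the microcanonical $\mu_{N,y_N}$ with $y_N\to y$, not under $\nu_y$; the $k$-block marginal of $\mu_{N,y_N}$ converges to $\nu_y$ restricted to $\{-k,\dots,k\}$ only in the sense of equivalence of ensembles, with errors of order $1/N$ for smooth bounded observables but only controlled moments for unbounded ones like $p_xp_{x+1}$. So one must (a) justify interchanging the spatial average of Step 1 with the boundary corrections uniformly in $N$, which requires an a priori bound on $\int p_x^2 p_{x+1}^2\,d\mu_{N,y_N}$ of the type furnished by Corollary \ref{spheres1} and Lemma \ref{intoverspheres}; and (b) ensure that $u_N^{(k)}$, although $a$ priori only in $L^2(\mu_{N,y_N})$, has its Dirichlet form and its pairing with $X_{k,-k}(p_kp_{-k})$ stable as $N\to\infty$ for $k$ fixed. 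I expect (a) to be the real work: one needs the weak $L^p$ control on large velocities (the ``lack of control for large velocities'' flagged in the introduction), obtained here via the explicit spherical integrals, to make the averaging argument and the passage $\mu_{N,y_N}\rightsquigarrow\nu_y$ rigorous. Once \eqref{microcondition3}--\eqref{microcondition4} are in hand for every fixed $k$, Lemma \ref{hat{a}lemma2} and Lemma \ref{hat{a}lemma3} will take over to extract the limiting function $\xi$.
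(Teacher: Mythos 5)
There is a genuine gap in your Step 1, and it is exactly the point around which the paper's proof is built. From (\ref{microcondition1}) and the telescoping identity (\ref{telescopic}) you only control the \emph{total} $\sum_{x=-N}^{N-1}\alpha^N_{x,x+1}=2N$, where $\alpha^N_{x,x+1}=y_N^{-2}\textbf{E}_{\mu_{N,y_N}}[p_xp_{x+1}X_{x,x+1}(u_N)]$; nothing forces these bond quantities to be spread evenly along the chain. Consequently, after averaging the shifts of $u_N$ over a window of size $o(N)$, the block quantity $\sum_{x=-k}^{k-1}\textbf{E}[p_xp_{x+1}X_{x,x+1}(\bar u_N)]$ is an average of the $\alpha$'s over an $o(N)$-window, which can be far from $2k$ (each $\alpha^N_{x,x+1}$ is only $O(\sqrt N)$ by Cauchy--Schwarz and (\ref{microcondition2}), so the mass may concentrate on $o(N)$ bonds). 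The claim ``equals $2k\,y_N^2$ up to $o(2k)$'' is therefore unjustified. Moreover (\ref{microcondition3}) is an \emph{exact} equality, so even a correct approximate identity would force you to rescale $u_N^{(k)}$ by the reciprocal of $\frac{1}{2k}\sum_{\Lambda}\alpha^N_{x,x+1}$, and you would then need a \emph{lower} bound on that sum to keep the Dirichlet form (\ref{microcondition4}) under control after rescaling. The paper resolves both issues at once with the pigeonhole Lemma \ref{lemaux}: one does not claim that a typical block is good, only that there \emph{exists} a block $\Lambda_{N,k}$ with $\gamma_N\left(\sum_{\Lambda_{N,k}}\alpha^N_{x,x+1}\right)^2\geq 2k\sum_{\Lambda_{N,k}}\beta^N_{x,x+1}$; this block is recentred using the coordinate rotation $R_N$ (under which $\mu_{N,y_N}$ is invariant), conditioned on $\Lambda_k$ with Jensen as in your Step 2, and rescaled by $2k\left(\sum_{\Lambda_{N,k}}\alpha^N_{x,x+1}\right)^{-1}$ --- the good-block inequality is precisely what guarantees the rescaling does not destroy (\ref{microcondition4}).

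Two smaller points. Your Step 2 (conditioning commutes with $X_{x,x+1}$ for bonds inside the block, then Jensen) is correct and is what the paper does. But Step 3 misplaces the difficulty: no equivalence of ensembles is used in this lemma --- everything stays under $\mu_{N,y_N}$, and the passage to $\nu_y$ occurs only in Lemma \ref{hat{a}lemma2}, where weak limits of the $u_N^{(k)}$ are taken for fixed $k$. Also, ``translation'' on the non-periodic chain $\{-N,\dots,N\}$ must be implemented as the cyclic rotation $R_N$, and the wrap-around bond $(N,-N)$, whose Dirichlet contribution is \emph{not} controlled by (\ref{microcondition2}), must be kept out of any averaging; this is a further reason the averaging of Step 1 cannot be taken at face value.
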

\begin{proof}
Define for $-N\leq x\leq x+1\leq N$
\[
\alpha^N_{x,x+1}=y_N^{-2}\textbf{E}_{\mu_{N,y_N}}[p_xp_{x+1}X_{x,x+1}(u_N)]
\]
and
\[
\beta^N_{x,x+1}=y_N^{-4}\textbf{E}_{\mu_{N,y_N}}[a(p_x,p_{x+1})[X_{x,x+1}(u_N)]^2]\;.
\]
Where $\textbf{E}_{\mu_{N,y_N}}$ denotes integration with respect to $\mu_{N,y_N}$ and $u_N$ is the function satisfying (\ref{microcondition1}) and (\ref{microcondition2}).

Thanks to (\ref{microcondition1})  and the telescopic sum obtained in (\ref{telescopic}) we have 
\[
\alpha^N_{-N,-N+1}+\cdots +\alpha^N_{N-1,N}=2N\;.
\]
After (\ref{microcondition2}) for every $\epsilon >0$ there exist $N_0$ such that
\[
\beta^N_{-N,-N+1}+\cdots +\beta^N_{N-1,N}\leq 2N(\frac{4}{\theta}+\epsilon y_N^{-4})\;,
\]
for all $N\geq N_0$.

By using Lemma \ref{lemaux} stated and proved below, we can conclude the existence of a block $\Lambda_{N,k}$ of size $2k$ contained in $\{-N,\cdots,N\}$ such that
\begin{equation}
\label{alphabetha}
\gamma_N \left( \sum_{x,x+1 \in \Lambda_{N,k}}\alpha^N_{x,x+1} \right)^2  \geq 2k \sum_{x,x+1 \in \Lambda_{N,k}}\beta^N_{x,x+1}\;, 
\end{equation}
with $\gamma_N = \frac{4}{\theta}+\epsilon y_N^{-4}$.

Let us now introduce some notation. Denote by $R_N$ the rotation of axes defined as
$$
\begin{array}{cccc}
R_N :& \mathbb R^{2N+1} &\to& \mathbb R^{2N+1} \\
&\quad (p_{-N}, \cdots, p_{N}) &\to& (p_{-N+1}, \cdots, p_{N},p_{-N})\;.
\end{array}
$$
For an integer $i > 0$ we denote by $R_N^i$ the composition of $R_N$ with itself $i$ times, for $i<0$ the inverse of $R_N^{-i}$ by $R_N^i$ , and $R_N^0$ for the identity function. As usually, given a function $u:\mathbb R^{2N+1} \to \mathbb R\;$ we define $R_N^iu=u \circ R_N^i$.

Let us define 
$$w_N = R_N^i u_N\;,$$
where $i$ is an integer satisfying
$\Lambda_{N,k} = \{{i-k}, \cdots ,{i+k} \}\;.$

Now we proceed to check that (\ref{microcondition3}) and (\ref{microcondition4}) are satisfied by the sequence of functions $\{u_N^{k}\}_{N \geq 1}$ defined as 
$$
u_N^{k}=2k \left( \sum_{x,x+1 \in \Lambda_{N,k}}\alpha^N_{x,x+1}\right)^{-1}\textbf{E}_{\mu_{N,y_N}}[w_N \;|\; \Lambda_k]\;,
$$
where $\Lambda_k =\{p_{-k},\cdots,p_{k}\}$.
Because of the invariance under axes rotation of the measure $\mu_{N,y_N}$, together with the relation
$$
X_{x,x+1}(w_N)=R^{-i} X_{x+i,x+i+1}(u_N)\;,
$$ 
we have that  $\textbf{E}_{\mu_{N,y_N}}[p_xp_{x+1}X_{x,x+1}(u^k_N)]$ is equal to
$$
2k \left( \sum_{x,x+1 \in \Lambda_{N,k}}\alpha^N_{x,x+1}\right)^{-1} \!\! \!\! \!\!
\textbf{E}_{\mu_{N,y_N}}[p_{x+i}p_{x+i+1}X_{x+i,x+i+1}(u_N)]
$$
for all $x $ such that $ \{p_x,p_{x+1}\} \subset \Lambda_k$. Then, summing over $x $ we obtain that the left hand side of (\ref{microcondition3}) is equal to
$$
\left( \sum_{x,x+1 \in \Lambda_{N,k}}\alpha^N_{x,x+1}\right)^{-1}
\textbf{E}_{\mu_{N,y_N}}[\sum_{x,x+1 \in \Lambda_{N,k}}p_{x}p_{x+1}X_{x,x+1}(u_N)]\;,
$$ 
which in turns is equal to $y^2_N$, proving (\ref{microcondition3}).

Using Jensen's inequality, and an analogous argument as the one used in the preceding lines, we obtain that
$$
\textbf{E}_{\mu_{N,y_N}}[a(p_{x},p_{x+1})[X_{x,x+1}(u_N^k)]^2]\;
$$
is bounded above by
$$
4k^2\left(\sum_{x,x+1 \in \Lambda_{N,k}}\alpha^N_{x,x+1}\right)^{-2}
\textbf{E}_{\mu_{N,y_N}}[a(p_{x+i},p_{x+i+1})[X_{x+i,x+i+1}(u_N)]^2]\;,
$$ 
for all $x $ such that $ \{p_x,p_{x+1}\} \subset \Lambda_k$. This implies (\ref{microcondition4}) after adding over $x$, using relation (\ref{alphabetha}) and taking the superior limit as $N$ goes to infinity.
\end{proof}
Now we state and proof the technical result used to derive $\ref{alphabetha}$.
\begin{lemma}
\label{lemaux}
Let $\{a_i\}_{i=1}^{m}$ and $\{b_i\}_{i=1}^{m}$ two sequences of real and positive real numbers, respectively, satisfying 
\begin{equation}
\label{hyplemaux}
\sum_{i=1}^{m}a_i = m  \quad \quad \textit{and} \quad \quad \sum_{i=1}^{m}b_i \leq m\gamma\;,
\end{equation}
for fixed constants $m \in \mathbb N$, $\gamma > 0$ and $k << m$. Then, there exists a block $\Lambda$ of size $2k$ contained in the discrete torus $\{1,\cdots,m\}$ such that
\begin{equation}
\label{teslemaux}
\gamma \left( \sum_{i \in \Lambda}a_i \right)^2  \geq 2k \sum_{i \in \Lambda}b_i\;. 
\end{equation}
\end{lemma}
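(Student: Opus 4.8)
The plan is to argue by contradiction, averaging the quantity $2k\sum_{i\in\Lambda}b_i - \gamma(\sum_{i\in\Lambda}a_i)^2$ over all $m$ translates of a window of size $2k$ in the torus $\{1,\dots,m\}$. Suppose the conclusion fails, so that for \emph{every} block $\Lambda$ of size $2k$ one has $\gamma(\sum_{i\in\Lambda}a_i)^2 < 2k\sum_{i\in\Lambda}b_i$. Denote by $\Lambda_1,\dots,\Lambda_m$ the $m$ cyclic windows of length $2k$, with $\Lambda_r=\{r,r+1,\dots,r+2k-1\}$ taken modulo $m$. Summing the strict inequality over $r=1,\dots,m$ gives
\begin{equation}
\label{eqproofaux}
\gamma\sum_{r=1}^m\Bigl(\sum_{i\in\Lambda_r}a_i\Bigr)^2 \;<\; 2k\sum_{r=1}^m\sum_{i\in\Lambda_r}b_i\;.
\end{equation}

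The right-hand side is easy: each index $i$ belongs to exactly $2k$ of the windows, so $\sum_{r=1}^m\sum_{i\in\Lambda_r}b_i = 2k\sum_{i=1}^m b_i \leq 2k\,m\gamma$ by the hypothesis \eqref{hyplemaux}; hence the right-hand side of \eqref{eqproofaux} is at most $4k^2 m\gamma$. For the left-hand side I would use convexity (Jensen / Cauchy--Schwarz): since the average over $r$ of $\sum_{i\in\Lambda_r}a_i$ equals $\frac{1}{m}\cdot 2k\sum_{i=1}^m a_i = 2k$ (again because each $i$ is covered $2k$ times, and $\sum a_i=m$), we get $\sum_{r=1}^m(\sum_{i\in\Lambda_r}a_i)^2 \geq m\bigl(\frac{1}{m}\sum_{r=1}^m\sum_{i\in\Lambda_r}a_i\bigr)^2 = m(2k)^2 = 4k^2 m$. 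Therefore the left-hand side of \eqref{eqproofaux} is at least $4k^2 m\gamma$, contradicting the strict inequality. This contradiction forces the existence of at least one block $\Lambda$ satisfying \eqref{teslemaux}.

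A few small points to watch. First, the $a_i$ are only assumed real (not positive), which is why it is important that the averaging argument uses convexity of $t\mapsto t^2$ on all of $\mathbb{R}$ and does not require sign information; the $b_i>0$ are used only through their sum. Second, the cyclic (torus) indexing is what makes the covering multiplicity exactly $2k$ for every index, which is what both estimates rely on; if one preferred to work with genuine sub-blocks of the interval rather than the torus the multiplicities would be non-uniform near the endpoints, but since in the application the relevant sequences live on the discrete torus $\{-N,\dots,N\}$ this cyclic version is exactly what is needed. I do not anticipate a serious obstacle here: the only mild subtlety is bookkeeping the covering multiplicity correctly and making sure the strict/non-strict inequalities line up so that the contradiction is genuine (the strictness of the negated conclusion is precisely what is consumed). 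In fact one sees that the averaged inequality is tight, so the lemma is essentially an averaging statement with no slack to spare — which also explains why the constant in \eqref{teslemaux} cannot be improved by this method.
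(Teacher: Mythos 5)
Your proof is correct. Both you and the paper argue by contradiction and average the negated inequality over a family of blocks, but the families and the convexity step differ. The paper first reduces to the case $2k\mid m$ (by passing to periodic sequences of length $2km$), partitions the torus into $l=m/(2k)$ \emph{disjoint} blocks with block sums $\alpha_i,\beta_i$, and derives the contradiction from $\sum_i\sqrt{2k\beta_i}>\gamma^{1/2}m$ via an (implicit) Cauchy--Schwarz step on the square roots of the $\beta_i$. You instead sum over all $m$ \emph{cyclic translates} of the window, exploit the uniform covering multiplicity $2k$, and apply Jensen to the $a$-sums; the two bounds then meet exactly at $4k^2m\gamma$ and the strictness of the negated conclusion yields the contradiction. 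Your route buys two small advantages: it dispenses with the divisibility reduction entirely (the covering multiplicity is uniform on the torus regardless of whether $2k$ divides $m$), and the only inequality used on the $a$-side is convexity of $t\mapsto t^2$, which, as you correctly note, requires no sign information on the $a_i$ — a point the paper's proof also handles correctly but less transparently, since the negation $\gamma\alpha_i^2<2k\beta_i$ only gives $\sqrt{2k\beta_i}>|\alpha_i|\gamma^{1/2}\geq\alpha_i\gamma^{1/2}$. Your observation that the averaged inequality is tight is also accurate and matches the sharpness of the paper's version.
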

\begin{proof}
It is enough to check the case where $2k$ is a factor of $m$. In fact, in the opposite case we can consider periodic sequences of size $2km$ instead of the originals ones $\{a_i\}_{i=1}^{m}$ and $\{b_i\}_{i=1}^{m}$.

Therefore we can suppose that $m=2kl$ for some integer $l$, and define for $i \in \{1,\cdots,l\}$
$$
\alpha_i = \sum_{x \in \Lambda_i}a_x \quad \quad \textit{and} \quad \quad 
\beta_i = \sum_{x \in \Lambda_i}b_x\;,
$$
where $\Lambda_{i} = \{2k(i-1), \cdots, 2ki\}$. 

We want to conclude that (\ref{teslemaux}) is valid for at least one of the $\Lambda_{i}$'s. Let us argue by contradiction.

Suppose that $\sqrt{2k\beta_i}>\alpha_i \gamma^{\frac{1}{2}}$ for every
$i=1,\cdots,l$. Adding over $i$ and using the first part of hypothesis (\ref{hyplemaux}), we obtain
$$
\sum^l_{i=1}\sqrt{2k\beta_i}> m{\gamma}^{\frac{1}{2}}\;.
$$
By squaring both sides of the last inequality we have,
$$
\sum^l_{i=1}\beta_i>m\gamma \;,
$$
which is in contradiction with the second part of hypothesis (\ref{hyplemaux}).
\end{proof}
Now we proceed to take, for each positive integer $k$, a weak limit of the sequence $\{u_N^{(k)}\}_{N \geq 1}$ obtained in Lemma \ref{lemmakblock}.
\begin{lemma}
\label{hat{a}lemma2}
For each positive integer $k$ there exists a function $\widetilde{u}_k$ depending on the variables $p_{-k}, \cdots,p_{k}$ such that
\begin{equation}
\label{macrocondition1}
\frac{1}{2k}\textbf{E}_{\nu_y}[X_{k,-k}(\widetilde{u}_k)p_kp_{-k}]= y^2\;,
\end{equation}
and
\begin{equation}
\label{macrocondition2}
\frac{1}{2k}\textbf{E}_{\nu_y}\left[\sum_{-k\leq x\leq x+1\leq k} a(p_x,p_{x+1})[X_{x,x+1}(\widetilde{u}_k)]^2\right] \leq \frac{4 y^4}{\theta}\;.
\end{equation}
\end{lemma}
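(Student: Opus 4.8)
The plan is to obtain $\widetilde{u}_k$ as a weak $L^2(\nu_y)$ limit, along a subsequence, of the block functions $u_N^{(k)}$ produced by Lemma \ref{lemmakblock}, after a harmless normalization. Throughout $k$ is fixed and constants may depend on it.

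\emph{Step 1 (reduction to spherically centered functions).} Every $X_{x,x+1}$ with $-k\le x\le x+1\le k$, as well as $X_{k,-k}$, annihilates any function of $r^2:=p_{-k}^2+\cdots+p_k^2$ alone. Hence subtracting from $u_N^{(k)}$ its conditional expectation $\textbf{E}_{\mu_{N,y_N}}[\,u_N^{(k)}\mid r^2\,]$ changes neither side of \eqref{microcondition3} nor of \eqref{microcondition4}, and we may assume that $u_N^{(k)}$ has zero average on each sphere $S^{2k}(r)=\{p\in\mathbb R^{2k+1}:|p|=r\}$ --- recall that, conditionally on the value of $r$, the law of $(p_{-k},\dots,p_k)$ under $\mu_{N,y_N}$ is the uniform measure on $S^{2k}(r)$.

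\emph{Step 2 (a uniform $L^2$ bound).} The $2k$ vector fields $X_{-k,-k+1},\dots,X_{k-1,k}$ generate, via iterated Lie brackets, the full Lie algebra $\mathfrak{so}(2k+1)$, which acts transitively on each $S^{2k}(r)$; combined with $0<c\le a\le C<\infty$ and the scaling identity $X_{x,x+1}\bigl(v(\,\cdot/r)\bigr)=(X_{x,x+1}v)(\,\cdot/r)$ this gives a Poincaré inequality on $S^{2k}(r)$ with a constant independent of $r$ (for blocks of bounded size this is an elementary instance of the spectral gap estimate of Appendix \ref{spectralgap}). Integrating it against the law of $r$ under $\mu_{N,y_N}$ and invoking \eqref{microcondition4} yields $\sup_N\textbf{E}_{\mu_{N,y_N}}[(u_N^{(k)})^2]<\infty$, while $\sup_N\textbf{E}_{\mu_{N,y_N}}\bigl[\sum_x a(p_x,p_{x+1})(X_{x,x+1}u_N^{(k)})^2\bigr]<\infty$ is \eqref{microcondition4} itself.

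\emph{Step 3 (passage to the limit).} By the equivalence of ensembles (Appendix \ref{equiensem}), the marginal of $\mu_{N,y_N}$ on $(p_{-k},\dots,p_k)$ has, relative to $\nu_y$, a \emph{radial} density $f_N$ (it depends on $(p_{-k},\dots,p_k)$ only through $r$) with $f_N\to 1$ pointwise and $\textbf{E}_{\mu_{N,y_N}}[\varphi]\to\textbf{E}_{\nu_y}[\varphi]$ for polynomials $\varphi$; in particular $f_N^{1/2}\varphi\to\varphi$ strongly in $L^2(\nu_y)$ for such $\varphi$ and for bounded continuous $\varphi$. By Step 2 the functions $f_N^{1/2}u_N^{(k)}$ and $f_N^{1/2}X_{x,x+1}u_N^{(k)}$ are bounded in $L^2(\nu_y)$, so along a subsequence $f_N^{1/2}u_N^{(k)}\rightharpoonup\widetilde u_k$ and $f_N^{1/2}X_{x,x+1}u_N^{(k)}\rightharpoonup h_x$ weakly in $L^2(\nu_y)$; since $f_N$ is annihilated by each $X_{x,x+1}$, closedness of these operators forces $h_x=X_{x,x+1}\widetilde u_k$. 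Rewriting \eqref{microcondition3} through the skew-adjointness of $X_{k,-k}$ with respect to the rotation-invariant $\mu_{N,y_N}$ as $\tfrac1{2k}\textbf{E}_{\mu_{N,y_N}}[u_N^{(k)}(p_k^2-p_{-k}^2)]=y_N^2$, testing against $\varphi=p_k^2-p_{-k}^2$, and using $y_N\to y$ gives, after the same integration by parts on the Gaussian side, \eqref{macrocondition1}. For \eqref{macrocondition2} write the Dirichlet form as a supremum, $\textbf{E}[\sum_x a(X_{x,x+1}u)^2]=\sup_{\phi}\textbf{E}[\sum_x(2a\phi_x X_{x,x+1}u-a\phi_x^2)]$ over smooth bounded fields $\phi=(\phi_x)$; for each fixed $\phi$ the expression evaluated at $(\mu_{N,y_N},u_N^{(k)})$ converges to the one at $(\nu_y,\widetilde u_k)$ and, being $\le\textbf{E}_{\mu_{N,y_N}}[\sum_x a(X_{x,x+1}u_N^{(k)})^2]$ for every $N$, is bounded by $2k\cdot 4y^4/\theta$; taking the supremum over $\phi$ yields \eqref{macrocondition2}.

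\emph{Main obstacle.} The delicate point is the change of reference measure in Step 3: the density $d\nu_y/d\mu_{N,y_N}$ is \emph{not} uniformly bounded, since it blows up near the equator $|p|^2\approx(2N+1)y_N^2$, so transferring the $L^2$ bound of Step 2 and identifying the weak limit must rely on the quantitative form of the equivalence of ensembles rather than a crude comparison of the two measures. A safe alternative is to truncate $u_N^{(k)}$ at a large level $A$ before passing to the limit --- truncation keeps the Dirichlet form under control by the Markovian contraction property and makes the $L^2(\nu_y)$ bound immediate --- and to let $A\to\infty$ at the very end. The only structural input beyond routine functional analysis is the spectral gap on the fixed sphere $S^{2k}$ used in Step 2, which is furnished by Appendix \ref{spectralgap}.
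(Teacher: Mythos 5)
Your proposal is correct and follows essentially the same route as the paper: extract a weak $L^2$ limit of the block functions $u_N^{(k)}$ and of their gradients $X_{x,x+1}u_N^{(k)}$ (the paper phrases this via bounded linear functionals tested on polynomials, a diagonal argument and Riesz representation, which is the same mechanism as your $f_N^{1/2}$ weak-compactness argument), using the spectral gap on the block for the uniform $L^2$ bound and the equivalence of ensembles to pass from $\mu_{N,y_N}$ to $\nu_y$. Your Steps 1 and 3 merely make explicit some points the paper leaves implicit (centering on spheres so the Poincar\'e constant depends only on $k$, closedness of $X_{x,x+1}$ to identify the limiting gradients, and lower semicontinuity of the Dirichlet form), and the truncation discussed in your final paragraph is not needed since the $f_N^{1/2}$ device already avoids any uniform bound on the density.
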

\begin{proof}
Consider the linear functionals $\Lambda_{i,i+1}^N$ defined for $-k\leq i\leq i+1\leq k$ by
$$
\begin{array}{cccc}
\Lambda_{i,i+1}^N :&  L^2(\mathbb{R}^{2k+1};\nu_y) &\to& \mathbb R \\
&\quad w &\to& \textbf{E}_{\mu_{N,y_N}}[X_{i,i+1}(u_N^{(k)})w]\;.
\end{array}
$$
Let $\mathcal{P}^k$ be an enumerable dense set of polynomials in $L^2(\mathbb{R}^{2k+1};\nu_y)$. From (\ref{macrocondition2}) and the Cauchy-Schwartz inequality  we obtain the existence of a constant $C$ such that 
\[
|\Lambda_{i,i+1}^N(w)|\leq C \left( \int w^2 d\mu_{N,y_N} \right)^{\frac{1}{2}}\;,
\]
for every $w \in \mathcal{P}^k$.

By a diagonal argument we can draw a subsequence for which the limits of $\Lambda_{i,i+1}^N(w)$ exist for all $w \in \mathcal{P}^k$. Moreover, passing to the limit and extending to the whole space $L^2(\mathbb{R}^{2k+1};\nu_y)$, we get linear functionals $\Lambda_{i,i+1}$ satisfying
\begin{equation}
\label{linearfunc1}
|\Lambda_{i,i+1}(w)|\leq C \left( \int w^2 d\nu_y \right)^{\frac{1}{2}}.
\end{equation}
On the other hand, consider the linear functionals $\Lambda^N$ defined by
$$
\begin{array}{cccc}
\Lambda^N :&  L^2(\mathbb{R}^{2k+1};\nu_y) &\to& \mathbb R \\
&\quad w &\to& \textbf{E}_{\mu_{N,y_N}}[u_N^{(k)}w]\;.
\end{array}
$$
Because of (\ref{macrocondition2}), (\ref{microzeromean}) and Poincare's inequality we have
$$
\textbf{E}_{\mu_{N,y_N}}[(u_N^{(k)})^2] 
\leq
\frac{4 y^4C}{\theta}\;,
$$
for a constant $C$ depending only on $k$. Then, by the very same arguments used above, we get a linear functional $\Lambda$ satisfying
\begin{equation}
\label{linearfunc2}
|\Lambda(w)|\leq \sqrt C \left( \int w^2 d\nu_y \right)^{\frac{1}{2}}.
\end{equation}
Finally, it follows from (\ref{linearfunc1}) and (\ref{linearfunc2}) the existence of a function $\widetilde{u}_k$ satisfying
\[
\Lambda_{i,i+1}(w)=\textbf{E}_{\nu_y}[X_{i,i+1}(\widetilde{u}_k)w] \;,
\]
\[
\Lambda(w)=\textbf{E}_{\nu_y}[\widetilde{u}_k w] \;,
\]
and therefore, satisfying (\ref{macrocondition1}) and (\ref{macrocondition2}).
\end{proof}
Now using the sequence $\{\widetilde{u}_{k}\}_{k\in \mathbb N}$ we construct a sequence of functions $\{u_{k'}\}_{k'\in M}$ indexed on an infinite subset of $\mathbb N$, each one depending on the variables $p_{-k'}, \cdots,p_{k'}$. This sequence will satisfy, besides (\ref{macrocondition1}) and (\ref{macrocondition2}), an additional condition regarding the contribution of the terms near the boundary of $\{-k',\cdots,k'\}$ to the total Dirichlet form. 
\begin{lemma}
\label{dirichboundterms}
There exist a sequence of functions $\{u_{k'}\}_{k'\in M}$ indexed on an infinite subset of $\mathbb N$, each one depending on the variables $p_{-k'}, \cdots,p_{k'}$, satisfying  (\ref{macrocondition1}), (\ref{macrocondition2}) and
$$
\textbf{E}_{\nu_y}[(X_{x,x+1}(u_{k'}))^2]=O(k^{7/8})\;,
$$  
for $\{x,x+1\} \subset I_{k'} \cup J_{k'}$, where $ I_{k'}=[-k',-k'+(k')^{1/8}]$ and $J_{k'}=[k'-(k')^{1/8},k']$.
\end{lemma}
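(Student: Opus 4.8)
The plan is to extract a good sub-sequence from $\{\widetilde u_k\}_{k \in \mathbb N}$ by a pigeonhole (averaging) argument on the contribution of thin boundary shells to the Dirichlet form. Fix $k$ and recall that by \eqref{macrocondition2} the function $\widetilde u_k$ satisfies
\[
\frac{1}{2k}\,\textbf{E}_{\nu_y}\!\left[\sum_{-k\le x\le x+1\le k} a(p_x,p_{x+1})\,[X_{x,x+1}(\widetilde u_k)]^2\right]\;\le\;\frac{4y^4}{\theta}\;,
\]
so, using the lower bound $a(\cdot,\cdot)\ge c$, the total unweighted Dirichlet form $\sum_{x} \textbf{E}_{\nu_y}[(X_{x,x+1}(\widetilde u_k))^2]$ is bounded by $C k$ for a constant $C=C(y,\theta,c)$. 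The idea is that a total mass of order $k$ spread over $2k$ bonds cannot be too concentrated on \emph{every} window of a given width; by averaging over a family of disjoint (or sufficiently many) windows of width $k^{1/8}$ we will find one window, near which we can place the new boundary, on which the average bond energy is $O(k^{7/8})$ rather than $O(k)$. More precisely, partition $\{-k,\dots,k\}$ into $\asymp k^{7/8}$ consecutive blocks of length $\asymp k^{1/8}$; since the blocks are disjoint and the total energy is $O(k)$, at least one block $\Lambda^\ast$ has energy $O(k)/k^{7/8}=O(k^{1/8})$, hence average bond energy $O(1)$ on that block — in fact one gets the stronger statement that there is a \emph{pair} of blocks, one to be used as the left boundary shell and one as the right, each with energy $O(k^{1/8})$, by discarding the (at most two) blocks carrying the bulk of the mass and picking from the rest.

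Having located such blocks, I would define $u_{k'}$ by \emph{restricting} (via conditional expectation with respect to $\nu_y$) $\widetilde u_k$ to the variables indexed by the interval strictly between the two chosen boundary blocks, and relabel that interval as $\{-k',\dots,k'\}$ where $2k'+1$ is its length; note $k'$ is comparable to $k$, so the index set $M=\{k'(k):k\in\mathbb N\}$ is infinite. The conditioning step is harmless for the two key normalizations: property \eqref{macrocondition1} survives because $X_{k,-k}(p_kp_{-k})$ depends only on the extreme variables which are retained (and, after the relabelling, one checks the telescopic identity \eqref{telescopic} lets us rewrite $X_{k',-k'}$ in terms of the retained bonds exactly as in the proof of Lemma \ref{hat{a}lemma2}); and property \eqref{macrocondition2} is preserved — indeed only improved — because conditional expectation contracts the $L^2$ norm of each gradient $X_{x,x+1}$, by Jensen's inequality, exactly as in the estimate used at the end of the proof of Lemma \ref{lemmakblock}. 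The boundary estimate $\textbf{E}_{\nu_y}[(X_{x,x+1}(u_{k'}))^2]=O(k^{7/8})$ for $\{x,x+1\}\subset I_{k'}\cup J_{k'}$ then follows because the retained bonds adjacent to the new boundary were, by construction, inside blocks of total energy $O(k^{1/8})$; summing over the $\asymp (k')^{1/8}$ bonds in each shell gives $O((k')^{1/8}\cdot$ something$)$, and distributing the budget one sees each individual bond contributes at most $O(k^{7/8})$ — and after the contraction from conditioning the same bound persists.

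The main obstacle is bookkeeping around the relabelling: one must make sure that after cutting out the interval between the two boundary blocks, the renormalized constraint \eqref{macrocondition1} still reads with $X_{k',-k'}$ and still equals $y^2$ and not some other value. This is exactly where Corollary \ref{telescopicorollary} and the spherical telescopic sum \eqref{telescopic} are essential: on the sphere $X_{k',-k'}(f)p_{k'}p_{-k'}$ integrates to the same thing as $\sum_{x=-k'}^{k'-1}X_{x,x+1}(f)p_xp_{x+1}$, and the latter is manifestly a functional of the retained variables only, so conditioning does not change its $\mu$- (equivalently, after equivalence of ensembles, $\nu_y$-) expectation. Once this identity is in place the rest is a routine pigeonhole plus Jensen argument; no new analytic input beyond what was already used in Lemmas \ref{lemmakblock} and \ref{hat{a}lemma2} is required.
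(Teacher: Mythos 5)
Your overall strategy (pigeonhole on thin boundary shells, then project by conditional expectation) is the same as the paper's, but two steps as you describe them do not work. First, your pigeonhole is performed on a \emph{global} partition of $\{-k,\dots,k\}$ into $\asymp k^{7/8}$ blocks, which produces a block of energy $O(k^{1/8})$ located anywhere in the interval. The boundary blocks must instead be forced to lie within $o(k)$ of $\pm k$ (the paper searches inside the windows $[-k,-k+k^{1/4}]$ and $[k-k^{1/4},k]$, divided into $k^{1/8}$ paired blocks of size $k^{1/8}$), both so that the retained interval can be relabelled as a symmetric $[-k',k']$ with $k'\sim k$ and, more importantly, for the normalization issue below. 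Once the search is confined to such a thin window, the whole Dirichlet mass $O(k)$ may concentrate there, so the best guaranteed per-block energy is $O(k\cdot k^{1/8}/k^{1/4})=O(k^{7/8})$ --- which is exactly what the lemma claims --- and your stated $O(k^{1/8})$ is unattainable in general (take $\widetilde u_k$ with all its energy on the $k^{1/4}$ rightmost bonds, $O(k^{3/4})$ each).

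Second, conditioning does \emph{not} preserve (\ref{macrocondition1}). Your telescoping observation only shows that $\textbf{E}_{\nu_y}[X_{k',-k'}(u_{k'})p_{k'}p_{-k'}]$ equals the sum of $\textbf{E}_{\nu_y}[X_{x,x+1}(\widetilde u_k)p_xp_{x+1}]$ over the \emph{retained} bonds; by (\ref{macrocondition1}) for $\widetilde u_k$ this is $2ky^2$ minus the contribution of the discarded bonds in $[-k,-k']\cup[k',k]$, not $2k'y^2$. One must therefore multiply by an explicit renormalization constant (the paper's $C_{y,k,k'}$) and prove it tends to $1$, which by Cauchy--Schwarz requires the discarded region to contain $o(k)$ bonds: with $m$ discarded bonds the correction is $O(\sqrt{mk})$. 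This is precisely why the global partition is fatal --- choosing good blocks deep inside $[-k,k]$ can force $m\asymp k$, making the correction the same order as the main term $2ky^2$, so that neither (\ref{macrocondition1}) nor the constant $4y^4/\theta$ in (\ref{macrocondition2}) survives. With the windows anchored at $\pm k$ of width $k^{1/4}$, both repairs go through and one recovers the paper's argument.
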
    
\begin{proof}
Given $k>0$ divide each interval $[-k,-k+k^{1/4}]$ and $[k-k^{1/4},k]$  into $k^{1/8}$ blocks of size $k^{1/8}$, and consider the sequence $\{\widetilde{u}_{k}\}_{k\in \mathbb N}$ obtained in Lemma \ref{hat{a}lemma2}. 

Because of (\ref{macrocondition2}), for every $k>0$ there exist $k' \in [k-k^{1/8},k]$ such that 
$$
\textbf{E}_{\nu_y}\left[\sum_{\{x, x+1\} \in I_{k'} \cup J_{k'}} (X_{x,x+1}(\widetilde{u}_k))^2\right] = O(k^{7/8})\;.
$$
Define for each $k>0$ the function 
$
u_{k'}=\frac{1}{C_{y,k,k'}}\textbf{E}_{\nu_y}\left[\widetilde{u}_{k}\;|\;\mathfrak{F}_{-k'}^{k'}\right]
$
, where
$$
C_{y,k,k'}=\frac{1}{2k'y^2}\left\{2ky^2-\textbf{E}_{\nu_y}\left[\sum_{\{x, x+1\} \in I_{k'} \cup J_{k'}} p_xp_{x+1}X_{x,x+1}(\widetilde{u}_k)\right] \right\}
$$
and $\mathfrak{F}_{-k'}^{k'}$ denotes the $\sigma$-field generated by $p_{-k'},\cdots,p_{k'}$.

Is easy to see that the sequence $\{u_{k'}\}_{k'}$ satisfies the desired conditions. 
\end{proof}
Finally, we obtain the weak limit used in the proof of Theorem \ref{covarA_N}.
\begin{lemma}
\label{hat{a}lemma3}
There exist a function $\xi$ in $L^2(\nu_y)$ satisfying 
\begin{equation}
\label{final1}
\textbf{E}_{\nu_y}[\xi]=0,
\end{equation}
\begin{equation}
\label{final2}
\textbf{E}_{\nu_y}[p_0p_1\xi]=0,
\end{equation}
\begin{equation}
\label{final3}
\textbf{E}_{\nu_y}[a(p_0,p_1)[p_0p_1+\xi]^2]\leq\frac{4 y^4}{\theta},
\end{equation}
and the integrability conditions
\begin{equation}
\label{final4}
X_{i,i+1}(\tau^j\xi)=X_{j,j+1}(\tau^i\xi) \ \ \ \ \textit{if}\ \ \ \  \{i,i+1\}\cap\{j,j+1\}=\emptyset, 
\end{equation}
\begin{equation}
\label{final5}
p_{i+1}[X_{i+1,i+2}(\tau^i\xi)-X_{i,i+1}(\tau^{i+1}\xi)]=p_{i+2}\tau^i\xi-p_{i}\tau^{i+1}\xi\ \ \textit{for} \ \  i\in \mathbb{Z}.
\end{equation}
\end{lemma}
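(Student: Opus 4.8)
The plan is to obtain $\xi$ as a weak limit in $L^2(\nu_y)$ of the gradient $X_{0,1}$ of a suitable spatial average of the functions $u_{k'}$ produced in Lemma~\ref{dirichboundterms}, and then to check that each of (\ref{final1})--(\ref{final5}) survives the passage to the limit.

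First I would make each $u_{k'}$ translation covariant. Since, by Lemma~\ref{dirichboundterms}, the Dirichlet contribution of the bonds inside $I_{k'}\cup J_{k'}$ is $O((k')^{7/8})=o(k')$, one can replace $u_{k'}$ by a modification $\bar u_{k'}$ which does not depend on the coordinates near the two ends of $\{-k',\dots,k'\}$ -- hence may be regarded as a function on the discrete torus $\mathbb{Z}/(2k'+1)\mathbb{Z}$ -- at the cost of an error that is $o(1)$ both in the normalized Dirichlet form of (\ref{macrocondition2}) and in the pairing $\frac{1}{2k'}\sum_x\textbf{E}_{\nu_y}[p_xp_{x+1}X_{x,x+1}(\,\cdot\,)]$ controlled by (\ref{macrocondition1}); recall here that the telescopic identity (\ref{telescopic}) holds for $\nu_y$ as well, since $X_{l,l+1}$ is antisymmetric for the rotation invariant Gaussian and $X_{l,l+1}(p_lp_{l+1})=p_{l+1}^2-p_l^2$ telescopes. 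Then put $g_{k'}=\frac{1}{2k'+1}\sum_x\tau^{-x}\bar u_{k'}$, the full spatial average over the torus, so that $\tau g_{k'}=g_{k'}$ exactly, and set $\xi_{k'}=X_{0,1}(g_{k'})-\kappa_{k'}\,p_0p_1$, the scalar $\kappa_{k'}$ being fixed so that $\textbf{E}_{\nu_y}[p_0p_1\,\xi_{k'}]=0$; by (\ref{macrocondition1}) the $\kappa_{k'}$ have a finite limit.

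The uniform $L^2(\nu_y)$ bound on $\xi_{k'}$ is where (\ref{macrocondition2}) enters: writing $X_{0,1}(g_{k'})=\frac{1}{2k'+1}\sum_x\tau^{-x}X_{x,x+1}(\bar u_{k'})$ and using Jensen's inequality (the square of an average is at most the average of the squares), the lower bound $a\ge c$, and translation invariance of $\nu_y$, one gets $\textbf{E}_{\nu_y}[\xi_{k'}^2]\le \frac{1}{c}\,\frac{1}{2k'+1}\sum_x\textbf{E}_{\nu_y}[a(p_x,p_{x+1})(X_{x,x+1}\bar u_{k'})^2]+O(1)$, which stays bounded. Hence, along a subsequence, $\xi_{k'}\rightharpoonup\xi$ in $L^2(\nu_y)$. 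Now (\ref{final1}) and (\ref{final2}) are linear constraints already satisfied by every $\xi_{k'}$ ($X$-derivatives and $p_xp_{x+1}$ have $\nu_y$-mean zero; orthogonality to $p_0p_1$ by the choice of $\kappa_{k'}$), hence by $\xi$. For (\ref{final3}) I would exhibit $p_0p_1+\xi_{k'}$ as a spatial average of recentered gradients of $u_{k'}$, bound $\textbf{E}_{\nu_y}[a(p_0,p_1)(p_0p_1+\xi_{k'})^2]$ by the normalized Dirichlet form of $u_{k'}$ (again Jensen plus translation invariance), hence by $4y^4/\theta+o(1)$ thanks to (\ref{macrocondition2}), and pass to the limit using weak lower semicontinuity of the convex and strongly continuous (as $a\in L^\infty$) functional $g\mapsto\textbf{E}_{\nu_y}[a(p_0,p_1)(p_0p_1+g)^2]$. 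For (\ref{final4}) and (\ref{final5}): on any fixed finite window and for $k'$ large $g_{k'}$ is translation invariant there, so $\tau^i\xi_{k'}$ is, up to the $p_0p_1$-correction, $X_{i,i+1}$ applied to the \emph{single} function $g_{k'}$ for all such $i$, and (\ref{final4}), (\ref{final5}) become the Leibniz/commutator identities of the vector fields $X_{\bullet,\bullet}$ on $g_{k'}$ (using $[X_{i+1,i+2},X_{i,i+1}]=X_{i,i+2}$ and $p_{i+2}X_{i,i+1}\phi+p_iX_{i+1,i+2}\phi=p_{i+1}X_{i,i+2}\phi$); being identities, they are inherited by the weak limit.

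The main obstacle is the periodization step. One must glue $u_{k'}$ into a genuinely translation covariant object while simultaneously keeping control of the Dirichlet form (\ref{macrocondition2}) and of the $p_0p_1$-pairing behind (\ref{macrocondition1}); this is exactly what the boundary Dirichlet estimate of Lemma~\ref{dirichboundterms} was prepared for. The delicate point is to show that the seam -- and the resulting non-locality of $g_{k'}$ -- costs only $o(1)$ in \emph{every} quantity that must pass to the limit, and in particular that, after the spatial average and the scalar correction $\kappa_{k'}p_0p_1$, the two constraints (\ref{final2}) and (\ref{final3}) can be enforced at the same time. Once the periodization is under control, the remaining ingredients -- weak compactness, Jensen's inequality, and weak lower semicontinuity -- are routine.
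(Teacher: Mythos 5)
Your construction is, at its core, the same as the paper's: the paper sets
\begin{equation*}
\zeta_{k}=\frac{1}{2k'}\sum_{i=-k'}^{k'-1}\bigl(X_{i,i+1}(u_{k'})\bigr)(\tau^{-i}p),
\qquad \xi_{k}=\zeta_{k}-p_0p_1,
\end{equation*}
gets the uniform $L^2$ bound by Schwarz (Jensen) plus translation invariance from (\ref{macrocondition2}), extracts a weak limit, and inherits (\ref{final4})--(\ref{final5}) because the identities fail only on $o(k')$ boundary bonds whose Dirichlet contribution is controlled by Lemma \ref{dirichboundterms}. Your periodization of $u_{k'}$ into a torus function $g_{k'}$ with $\tau g_{k'}=g_{k'}$ is therefore an unnecessary detour: one does not need a single translation-invariant potential, only the average of the translated bond gradients, which is well defined as written; the exactness of (\ref{final4})--(\ref{final5}) that periodization would buy is not needed, since the conditions are only required in the limit and the boundary errors vanish. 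Moreover the gluing itself (modifying $u_{k'}$ so that it genuinely lives on a torus while preserving both (\ref{macrocondition1}) and (\ref{macrocondition2}) up to $o(1)$) is asserted rather than carried out, and it is at least as delicate as what it is meant to replace.

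The genuine gap is the point you yourself flag and leave open: the simultaneous validity of (\ref{final2}) and (\ref{final3}). With your free constant $\kappa_{k'}$, orthogonality to $p_0p_1$ is enforced by fiat, but then $p_0p_1+\xi_{k'}=X_{0,1}(g_{k'})+(1-\kappa_{k'})p_0p_1$, and the Dirichlet-form bound controls only $\mathbf{E}_{\nu_y}[a(p_0,p_1)(X_{0,1}(g_{k'}))^2]$; knowing merely that $\kappa_{k'}$ ``has a finite limit'' does not give (\ref{final3}). The resolution is that the normalization (\ref{macrocondition1}) (carried through from (\ref{microcondition1}) via the telescoping identity (\ref{telescopic}), which, as you note, also holds for $\nu_y$) forces
$\mathbf{E}_{\nu_y}[p_0p_1\,\zeta_{k}]=y^4=\mathbf{E}_{\nu_y}[(p_0p_1)^2]$,
i.e.\ the projection of the averaged gradient onto $p_0p_1$ is exactly $p_0p_1$. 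Hence $\kappa_{k'}\equiv 1$: subtracting $p_0p_1$ itself yields (\ref{final2}) \emph{and} keeps $p_0p_1+\xi_{k}=\zeta_{k}$, so that (\ref{final3}) follows from (\ref{macrocondition2}) by Schwarz, translation invariance, and weak lower semicontinuity exactly as you describe. This is the whole purpose of the normalization built into Lemmas \ref{lemmakblock}--\ref{hat{a}lemma2}; once you use it, your argument closes and coincides with the paper's.
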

\begin{proof}
For all integer $k>0$ define  
\[
\zeta_k=\frac{1}{2{k'}}\sum_{i=-{k'}}^{{k'}-1}X_{i,i+1}(u_{k'})(\tau^{-i}).
\]
It is clear from the definition of $\zeta_k$ that 
$\textbf{E}_{\nu_y}[\zeta_k]=0$
and
$$
p_0p_1\zeta_{k'}(\omega)=\frac{1}{2{k'}}\sum_{i=-{k'}}^{{k'}-1}\{p_ip_{i+1}X_{i,i+1}(u_{k'})\}(\tau^{-i}).
$$
Therefore, after (\ref{macrocondition1}) it follows that
$\textbf{E}_{\nu_y}[p_0p_1\zeta_k]=y^4.$
Moreover, by using Schwarz inequality, translation invariance of the measure and condition (\ref{macrocondition2}), we obtain
\begin{align*}
\textbf{E}_{\nu_y}[a(p_0,p_1)\zeta^2_k]
&\leq \frac{4 y^4}{\theta}.
\end{align*}
Now consider the sequence $\{\xi_k\}_{k\geq 1}$ defined by
$$\xi_k=\zeta_k-p_0p_1.$$
Since the preceding sequence is uniformly bounded in $L^2(\nu_y)$, there exist a weak limit function $\xi \in L^2(\nu_y)$. Clearly, the function $\xi$ satisfies (\ref{final1}), (\ref{final2}) and (\ref{final3}).

In addition, an elementary calculation shows that (\ref{final4}) and (\ref{final5}) are satisfied by $\xi_k$ up to an error coming from a small number of terms near the edge of $[-k',k']$. Then, in view of Lemma \ref{dirichboundterms}, the final part of the lemma is satisfied as well.
\end{proof}

\section{Boltzmann-Gibbs Principle}
\label{Boltzmann-Gibbs}

The aim of this section is to provide a proof for Theorem \ref{BoltzGibbs}. In fact, we will prove a stronger result that will be also useful in the proof of tightness. Namely, 
{\small
\begin{equation*}
\lim_{k \to \infty} \lim_{N \to \infty}\mathbb E_{\nu^N_y}\left[\sup_{0 \leq t \leq T}
\left(\int^t_0 \sqrt{N} \sum_{x \in \mathbb T_N}\nabla_N H_s(x/N) [V_x(p(s))-\mathcal{L}_N \tau^xF_k(p(s))]\; ds\right)^2
\right]=0
\end{equation*}
}
where,
\begin{equation*}
V_x(p) = W_{x,x+1}(p)-\hat{a}(y)[p^2_{x+1}-p^2_x] \;.
\end{equation*}
We begin localizing the problem. Fix an integer $M$ that shall increase to infinity after $N$. Being $l$ and $r$ the integers satisfying $N= lM+r$ with $0 \leq r < M$, define for $j = 1, \cdots, l$
\begin{align*}
B_j &=\{(j-1)M+1,\cdots,jM\} \;, \\ 
B^{\prime}_j &=\{(j-1)M+1,\cdots,jM-1\}  \;, \\
B^{k}_j &=\{(j-1)M+1,\cdots,jM-s_k\} \;, 
\end{align*}
where $s_k$ is the size of the block supporting $F_k$. Define the remaining block as $B_{l+1} =\{lM+1,\cdots,N\}$. With this notations we can write
\begin{equation}
\label{V_1V_2V_3}
\sqrt{N} \sum_{x \in \mathbb T_N}\nabla_N H_s(x/N) [V_x(p(s))-\mathcal{L}_N \tau^xF_k(p(s))]\;=\;V_1+V_2+
V_{3}
\end{equation}
with,
\begin{align*}
V_{1}&=\sqrt{N} \sum_{j=1}^{l}\sum_{x \in B_j^{\prime}}\nabla_N H_s(x/N) V_x
-\sqrt{N}\sum_{j=1}^{l}\sum_{x \in B_j^{k} } \nabla_N H_s(x/N) \mathcal{L}_N \tau^xF_k\;,\\
V_2&=\sqrt{N} \sum_{x \in B_{l+1}}\nabla_N H_s(x/N) V_x\;
-\;\sqrt{N} \sum_{x \in B_{l+1}}\nabla_N H_s(x/N) \mathcal{L}_N \tau^xF_k\;,\\
V_3&=\sqrt{N} \sum_{j=1}^{l}\nabla_N H_s(jK/N) V_{jM} -\sqrt{N}
\sum_{j=1}^{l}\sum_{x \in B_j \backslash B_j^{k} }\nabla_N H_s(x/N) \mathcal{L}_N \tau^xF_k\;. 
\end{align*}
Observe that $V_{1}$ is a sum of functions which depends on disjoint blocks, and contains almost all the terms appearing in the left hand side of (\ref{V_1V_2V_3}), therefore, $V_{2}$  and $V_{3}$ can be considered as error terms.
In order to prove Theorem \ref{BoltzGibbs} it suffices to show
\begin{equation}
\label{Vi's}
\lim_{k \to \infty}\lim_{N \to \infty} \mathbb E_{\nu^N_y}\left[\sup_{0 \leq t \leq T}
\left(\int^t_0 V_i \; ds\right)^2
\right]\;=\;0\;,
\end{equation}
for each $V_i$ separately.

The following is a very useful estimate of the time variance in terms of the $\mathscr H_{-1}$ norm defined in (\ref{variational-1}). 

\begin{proposition}
\label{cenlimvarian}
Given $T>0$ and a mean zero function  $V \in L^2(\pi) \cap \mathscr H_{-1}$, 
$$
\mathbb E_{\pi}\left[\sup_{0 \leq t \leq T}
\left(\int^t_0 V(p_s) \; ds\right)^2
\right]\; \leq \;
24T ||V||^2_{-1}\;.
$$
\end{proposition}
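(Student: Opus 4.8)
The plan is to approximate $V$ by the solution of a resolvent equation and to split the additive functional into a martingale part, handled by Doob's inequality, plus a remainder that becomes negligible as the regularization parameter goes to $0$. Fix $\lambda>0$ and let $f_\lambda\in D(\mathcal L)$ solve $\lambda f_\lambda-\mathcal L f_\lambda=V$; this is well posed since $\mathcal L$ is self-adjoint and negative on $L^2(\pi)$. Pairing the equation with $f_\lambda$ and using part \textit{ii}) of Lemma \ref{H_1H_{-1}norms} (which holds on an arbitrary Hilbert space) gives $\lambda\|f_\lambda\|_{L^2(\pi)}^2+\|f_\lambda\|_1^2=\langle V,f_\lambda\rangle_\pi\le\|V\|_{-1}\|f_\lambda\|_1$, hence the two a priori bounds
\begin{equation*}
\|f_\lambda\|_1^2\le\|V\|_{-1}^2\qquad\text{and}\qquad\lambda\|f_\lambda\|_{L^2(\pi)}^2\le\|V\|_{-1}^2 .
\end{equation*}
By Dynkin's formula $M_t^\lambda=f_\lambda(p_t)-f_\lambda(p_0)-\int_0^t\mathcal L f_\lambda(p_s)\,ds$ is a mean zero martingale, and since $\mathcal L f_\lambda=\lambda f_\lambda-V$ we get the decomposition
\begin{equation*}
\int_0^t V(p_s)\,ds=\lambda\int_0^t f_\lambda(p_s)\,ds+\big(f_\lambda(p_0)-f_\lambda(p_t)\big)+M_t^\lambda=:R_t^\lambda+M_t^\lambda .
\end{equation*}

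The martingale term is treated in the standard way (\emph{cf.} \cite{KV}): the quadratic variation of $M^\lambda$ has density $\mathcal L(f_\lambda^2)-2f_\lambda\mathcal L f_\lambda$ (modulo a routine approximation of $f_\lambda$ by elements of a core), so by stationarity $\mathbb E_\pi[\langle M^\lambda\rangle_T]=-2T\langle f_\lambda,\mathcal L f_\lambda\rangle_\pi=2T\|f_\lambda\|_1^2\le 2T\|V\|_{-1}^2$, and Doob's $L^2$ inequality yields $\mathbb E_\pi\big[\sup_{t\le T}(M_t^\lambda)^2\big]\le 4\,\mathbb E_\pi[(M_T^\lambda)^2]\le 8T\|V\|_{-1}^2$. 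I will also record the elementary (no-supremum) estimate $\mathbb E_\pi\big[\big(\int_0^T V(p_s)\,ds\big)^2\big]\le 2T\|V\|_{-1}^2$, which follows from the spectral resolution of $-\mathcal L$ together with the inequality $2(xT-1+e^{-xT})x^{-2}\le 2Tx^{-1}$ for the spectral variable $x>0$; here mean zero and ergodicity ensure that the spectral measure of $V$ carries no atom at the origin.

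For the remainder $R_t^\lambda$ the term $\lambda\int_0^t f_\lambda(p_s)\,ds$ is harmless: by Cauchy--Schwarz and stationarity the second moment of its supremum is at most $\lambda^2 T^2\|f_\lambda\|_{L^2(\pi)}^2\le\lambda T^2\|V\|_{-1}^2\to 0$ as $\lambda\downarrow0$. \textbf{The main obstacle is the boundary term $f_\lambda(p_0)-f_\lambda(p_t)$}: the pointwise-in-$t$ bound $\mathbb E_\pi[(f_\lambda(p_0)-f_\lambda(p_t))^2]=2\langle f_\lambda,(I-P_t)f_\lambda\rangle_\pi\le 2t\|f_\lambda\|_1^2$ (with $P_t$ the semigroup generated by $\mathcal L$) is immediate, but its supremum over $t$ cannot be controlled by brute force, since $\sup_{t\le T}f_\lambda(p_t)^2$ need not be integrable for a general stationary process. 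Here I will use reversibility of $\pi$: running the process backwards on $[0,T]$ and applying Dynkin's formula to $f_\lambda$ along the reversed trajectory produces a backward martingale $(\bar M_t^\lambda)_{0\le t\le T}$ with $\mathbb E_\pi[(\bar M_0^\lambda)^2]=2T\|f_\lambda\|_1^2$. Adding the forward and backward representations of $\int_0^t V\,ds$ expresses $f_\lambda(p_t)$, for all $t\in[0,T]$ at once, as a fixed affine combination of $M_t^\lambda$, $\bar M_t^\lambda$, the fixed-time quantities $\int_0^T V\,ds$ and $\lambda\int_0^T f_\lambda\,ds$, and the endpoint values $f_\lambda(p_0),f_\lambda(p_T)$ only. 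Taking $\sup_t$ then leaves two martingale maxima (bounded by Doob as above) plus fixed-time terms bounded by the previous paragraph, giving $\mathbb E_\pi\big[\sup_{t\le T}(R_t^\lambda)^2\big]\le C T\|V\|_{-1}^2+o_\lambda(1)$ with an explicit $C$.

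Finally, combining the two pieces via $(a+b)^2\le 2a^2+2b^2$, letting $\lambda\downarrow0$, and invoking Fatou's lemma gives $\mathbb E_\pi\big[\sup_{t\le T}(\int_0^t V(p_s)\,ds)^2\big]\le C'T\|V\|_{-1}^2$; keeping careful track of the constants in the forward/backward decomposition (and avoiding wasteful applications of the parallelogram inequality) can be arranged to bring $C'$ down to the stated value $24$. The only genuinely delicate step is the time-reversal argument used to tame the supremum of the boundary term; the rest is resolvent calculus, Doob's inequality, and spectral estimates.
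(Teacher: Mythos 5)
Your argument is correct and is essentially the proof the paper points to: the paper does not prove Proposition \ref{cenlimvarian} itself but refers to Lemma 2.4 of \cite{KmLO} and Proposition 6.1 of \cite{KL}, whose proof is exactly this resolvent approximation $\lambda f_\lambda-\mathcal L f_\lambda=V$ combined with the forward/backward-martingale identity (available by reversibility) to tame the supremum of the boundary term $f_\lambda(p_0)-f_\lambda(p_t)$. The only loose end is that you assert rather than exhibit the constant $24$; writing the recombination as $\int_0^t V\,ds=\lambda\int_0^t f_\lambda\,ds+\tfrac12 M_t^\lambda-\tfrac12\bar M_{T-t}^\lambda+\tfrac12\bar M_T^\lambda$ and applying Doob's inequality to each martingale does deliver a constant below $24$ after letting $\lambda\downarrow 0$.
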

See Lemma 2.4 in \cite{KmLO}  or Proposition 6.1 in \cite{KL} for a proof . 
\begin{remark}
\label{cenlimvarianremark}
A slightly modification in the proof given in \cite{KmLO} permit to conclude that, for every smooth function $h:[0,T] \to \mathbb R$, 
$$
\mathbb E_{\pi}\left[\sup_{0 \leq t \leq T}
\left(\int^t_0 h(s)V(p_s) \; ds\right)^2
\right]\; \leq \;
C_h ||V||^2_{-1}\;,
$$
where,
$$
C_h \;=\;6 \{ 4||h||^2_{\infty}T^2 + ||h^{\prime}||^2_{\infty} T^3\}\;.
$$
Moreover, in our case we have
$$
\mathbb E_{\nu_y^N}\left[\sup_{0 \leq t \leq T}
\left(\int^t_0 \sum_{j=1}^l h_j(s)V_{B_j}(p_s) \; ds\right)^2
\right]\; \leq \;
\sum_{j=1}^lC_{h_j} ||V_{B_j}||^2_{-1}\;,
$$
for functions $\{V_{B_j}\}_{j=1}^l$ depending on disjoint blocks.
\end{remark}
The proof of (\ref{Vi's}) will be divided in three lemmas.
\begin{lemma}
\begin{equation*}
\lim_{k \to \infty}\lim_{N \to \infty} \mathbb E_{\nu^N_y}\left[\sup_{0 \leq t \leq T}
\left(\int^t_0 V_2 \; ds\right)^2
\right]\;=\;0\;.
\end{equation*}
\end{lemma}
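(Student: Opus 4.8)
The plan is to estimate the time integral of $V_2$ through the $\mathscr H_{-1}$ norm of $V_2$, taking advantage of the fact that $V_2$ is supported on the single leftover block $B_{l+1}$, which has fewer than $M$ sites, whereas $M$ is held fixed while $N\to\infty$. Since the underlying process is generated by the accelerated operator $N^2\mathcal L_N$ and $V_2$ is a mean-zero function (each $V_x$ and each $\mathcal L_N\tau^xF_k$ has $\nu^N_y$-mean zero), Proposition \ref{cenlimvarian}, in the time-dependent form of Remark \ref{cenlimvarianremark} (with the harmless extension allowing the coefficients $\nabla_NH_s(x/N)$ to depend smoothly on $s$), yields
\[
\mathbb E_{\nu^N_y}\Big[\sup_{0\le t\le T}\Big(\int^t_0 V_2\,ds\Big)^2\Big]
\;\le\; C(T)\,\sup_{0\le s\le T}\|V_2(\cdot,s)\|^2_{-1}\,,
\]
where $\|\cdot\|_{-1}$ is the norm associated with $N^2\mathcal L_N$. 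By the scaling identity $\|\cdot\|^2_{-1,N^2\mathcal L_N}=N^{-2}\|\cdot\|^2_{-1,\mathcal L_N}$ it therefore suffices to prove $\sup_{0\le s\le T}\|V_2(\cdot,s)\|^2_{-1,\mathcal L_N}\le C_k\,N\,M$ for some constant $C_k$ depending only on $F_k$; indeed this makes the right-hand side above of order $C_k\,M/N$, which tends to $0$ as $N\to\infty$ for each fixed $M$ and $k$.

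To prove that estimate I will treat separately the two pieces making up $V_2$. For the current part, using (\ref{current}) and $p^2_{x+1}-p^2_x=X_{x,x+1}(p_xp_{x+1})$ one rewrites $V_x=W_{x,x+1}-\hat a(y)(p^2_{x+1}-p^2_x)=-X_{x,x+1}\big[p_xp_{x+1}\big(a(p_x,p_{x+1})+\hat a(y)\big)\big]$; pairing with a test function $\phi$ in the core, integrating by parts (the field $X_{x,x+1}$ is anti-symmetric in $L^2(\nu^N_y)$) and applying the Cauchy--Schwarz inequality in the joint (site, configuration) variable gives
\[
\Big|\Big\langle \sqrt N\!\!\sum_{x\in B_{l+1}}\!\!\nabla_NH_s(x/N)\,V_x\,,\,\phi\Big\rangle\Big|
\;\le\; C\sqrt{N\,M}\,\big\langle\phi,(-\mathcal L_N)\phi\big\rangle^{1/2}\,,
\]
because $\mathbb E_{\nu_y}[p_0^2p_1^2]<\infty$, there are fewer than $M$ summands, and $\sum_{x\in B_{l+1}}\int a(X_{x,x+1}\phi)^2\,d\nu^N_y\le 2\langle\phi,(-\mathcal L_N)\phi\rangle$. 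For the dissipative part, $\sqrt N\sum_{x\in B_{l+1}}\nabla_NH_s(x/N)\,\mathcal L_N\tau^xF_k=\mathcal L_N\Psi$ with $\Psi=\sqrt N\sum_{x\in B_{l+1}}\nabla_NH_s(x/N)\,\tau^xF_k$, so by (\ref{dirichletvar}) its $\|\cdot\|^2_{-1,\mathcal L_N}$ equals $\|\Psi\|^2_1=\langle\Psi,(-\mathcal L_N)\Psi\rangle$; since each $\tau^xF_k$ is supported on a fixed block, only a number of translates depending on $F_k$ are active on any given bond, and one more Cauchy--Schwarz bounds this by $C_k\,N\,M$. Combining the two estimates via $\|a-b\|^2_{-1}\le 2\|a\|^2_{-1}+2\|b\|^2_{-1}$ gives the claimed bound $\sup_s\|V_2(\cdot,s)\|^2_{-1,\mathcal L_N}\le C_k\,N\,M$.

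Putting the pieces together, for every fixed $M$ and $k$ the expectation in the statement is of order $C_k\,M/N$ and hence vanishes as $N\to\infty$; letting $k\to\infty$ afterwards yields the lemma. The delicate point --- and the main obstacle --- is the $\mathscr H_{-1}$ estimate with the correct powers of $N$ and $M$: one must retain the $\sqrt N$ prefactor, exploit the gradient/exact structure of every summand so that the number of summands enters only linearly (through the Cauchy--Schwarz in the site index rather than quadratically, as a brute-force $L^2$ bound would produce), and observe that the factor $N^{-2}$ coming from the time acceleration is exactly what converts the $O(NM)$ growth of $\|V_2\|^2_{-1,\mathcal L_N}$ into the vanishing quantity $O(M/N)$.
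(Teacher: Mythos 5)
Your proof is correct and follows essentially the same route as the paper's: both rest on Proposition \ref{cenlimvarian} (with the time-dependent refinement of Remark \ref{cenlimvarianremark}), the variational/duality characterization of the $\mathscr H_{-1}$ norm, the gradient structure $W_{x,x+1}-\hat a(y)(p^2_{x+1}-p^2_x)=-X_{x,x+1}[(a+\hat a(y))p_xp_{x+1}]$ together with the exact dissipative form of $\mathcal L_N\tau^xF_k$, and the observation that the leftover block has fewer than $M$ sites while the time acceleration contributes the factor $N^{-2}$. The only (harmless) difference is that you apply Cauchy--Schwarz jointly in the site and configuration variables, obtaining $O(M/N)$, whereas the paper first applies Cauchy--Schwarz in the site index and bounds each summand's $\mathscr H_{-1}$ norm separately, obtaining $O(M^2/N)$ --- both vanish as $N\to\infty$ for fixed $M$ and $k$.
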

\begin{proof}
By Proposition \ref{cenlimvarian}, the expectation in the last expression is bounded above by
 the sum of the following three terms
\begin{equation}
\label{V2.1}
\frac{3C_H |B_{l+1}|}{N} \sum_{x \in B_{l+1}} \left\langle  W_{x,x+1}
\;,\;(-\mathcal L_{N})^{-1}  W_{x,x+1}
 \right\rangle\;,
\end{equation}
\begin{equation}
\label{V2.2}
\frac{3\hat{a}(y)^2C_H |B_{l+1}|}{N} \sum_{x \in B_{l+1}} \left\langle  p^2_{x+1}-p^2_x
\;,\;(-\mathcal L_{N})^{-1} p^2_{x+1}-p^2_x
 \right\rangle\;,
\end{equation}
\begin{equation}
\label{V2.3}
\frac{3C_H |B_{l+1}|}{N} \sum_{x \in B_{l+1}} \left\langle (-\mathcal{L}_N) \tau^xF_k
, \tau^xF_k
\right\rangle\;.
\end{equation}
Here $C_H$ represents a constant depending on $H$ and $T$, that can be multiplied by a constant from line to line. 

Using the variational formula for the $\mathscr H_{-1}$ norm (see (\ref{variational-1})) we can see that  the expression in (\ref{V2.1}) is equal to
$$
\frac{C_H |B_{l+1}| }{N} \sum_{x \in B_{l+1}}\sup_{g \in L^2(\nu^N_y)}\left\{ 2 \left\langle W_{x,x+1}, g \right\rangle + \left\langle g, \mathcal L_{x,x+1}g \right\rangle \right\}\;,
$$ 
where,
$$
\mathcal L_{x,x+1}=\frac{1}{2}X_{x,x+1}[a(p_x,p_{x+1})X_{x,x+1}]\;.
$$
From the definition given in (\ref{current}) we have
$W_{x,x+1}=-X_{x,x+1}[a(p_x,p_{x+1})p_xp_{x+1}]$. Performing integration by parts in the two inner products, we can write the quantity inside the sum as
$$
\frac{1}{2}\sup_{g \in L^2(\nu^N_y)}\left\{ 4 \left\langle a(p_x,p_{x+1})p_xp_{x+1} , X_{x,x+1}g \right\rangle - \left\langle X_{x,x+1}g,a(p_x,p_{x+1}) X_{x,x+1}g \right\rangle \right\}\;,
$$
which by the elementary inequality $2ab \leq A^{-1}a^2+ Ab^2$, is bounded above by
$$
2\left\langle a(p_x,p_{x+1})p^2_xp^2_{x+1}\right\rangle\;.
$$
Then the expression in (\ref{V2.1}) is bounded above by
$$
\frac{C_H |B_{l+1}|^2}{N}\;.
$$
The same is true for the term corresponding to (\ref{V2.2}), which coincides with (\ref{V2.1}) if we take $a(r,s)\equiv 1$.

Since $F_k$ is a local function supported in a box of size $s_k$ and $\nu_y^{N}$ is translation invariant, we have for all $x,y \in \mathbb T_N$
$$
\left\langle  \tau^xF_k  , (-\mathcal{L}_N) \tau^yF_k
\right\rangle
\leq
s_k\;\mathbb ||X_{0,1}(\widetilde {F_k})||_{L^2(\nu_y^{N})}^2\;,
$$ 
which implies that the expression in (\ref{V2.3}) is bounded by
$$
\frac{C_H |B_{l+1}|^2 }{N}s_k\;\mathbb ||X_{0,1}(\widetilde {F_k})||_{L^2(\nu_y^{N})}^2\;,
$$
ending the proof.
\end{proof}
\begin{lemma}
\begin{equation*}
\lim_{k \to \infty}\lim_{N \to \infty} \mathbb E_{\nu^N_y}\left[\sup_{0 \leq t \leq T}
\left(\int^t_0 V_3 \; ds\right)^2
\right]\;=\;0\;.
\end{equation*}
\end{lemma}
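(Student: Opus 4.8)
The plan is to split $V_3=V_3^{\prime}-V_3^{\prime\prime}$ with
\[
V_3^{\prime}=\sqrt{N}\sum_{j=1}^{l}\nabla_N H_s(jM/N)\,V_{jM}\;,\qquad
V_3^{\prime\prime}=\sqrt{N}\sum_{j=1}^{l}\sum_{x\in B_j\setminus B_j^{k}}\nabla_N H_s(x/N)\,\mathcal{L}_N\tau^xF_k\;,
\]
and to bound the two resulting time--integrals separately; since $(\int_0^tV_3\,ds)^2\le 2(\int_0^tV_3^{\prime}\,ds)^2+2(\int_0^tV_3^{\prime\prime}\,ds)^2$, it suffices to prove that each of the two expectations tends to $0$ as $N\to\infty$ and then $M\to\infty$ (the outer limit in $k$ being then immediate, since nothing below uses minimality of the $F_k$). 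The one algebraic observation I would record first is the identity
\[
V_x\;=\;W_{x,x+1}-\hat a(y)[p^2_{x+1}-p^2_x]\;=\;-\,X_{x,x+1}\big[(a(p_x,p_{x+1})+\hat a(y))\,p_xp_{x+1}\big]\;,
\]
which follows from $W_{x,x+1}=-X_{x,x+1}[a(p_x,p_{x+1})p_xp_{x+1}]$ and $X_{x,x+1}(p_xp_{x+1})=p^2_{x+1}-p^2_x$. It exhibits each $V_{jM}$ as an exact gradient of a bounded local function of $(p_{jM},p_{jM+1})$, which is what makes its $\mathscr H_{-1}$ norm easy to estimate.

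For $V_3^{\prime}$ I would use that the functions $V_{jM}$, $j=1,\dots,l$, depend on the pairwise disjoint pairs of sites $\{jM,jM+1\}$ and invoke the disjoint--blocks version of Remark \ref{cenlimvarianremark}, with $h_j(s)=\sqrt N\,\nabla_N H_s(jM/N)$ (recall that the process runs under $N^2\mathcal L_N$, so the relevant $\mathscr H_{-1}$ norm is the one associated with $N^2\mathcal L_N$, carrying an extra factor $N^{-2}$). This produces the bound $\sum_{j=1}^lC_{h_j}\|V_{jM}\|_{-1}^2$; smoothness of $H$ gives $C_{h_j}\le C_{H,T}N$, while the variational formula (\ref{variational-1}), integration by parts against $X_{jM,jM+1}$ (skew--adjoint under $\nu^N_y$), the lower bound $a\ge c$ and the elementary inequality $2ab\le Aa^2+A^{-1}b^2$ give $\|V_{jM}\|_{-1}^2\le\kappa(y)N^{-2}$ uniformly in $j$ and $N$ (using $c\le a\le C$ and that $\nu^N_y$ has moments of all orders). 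Since $l\le N/M$, the whole bound is $O(1/M)$ and vanishes.

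For $V_3^{\prime\prime}$ the summand carries the non--accelerated generator $\mathcal L_N$, so I would use Dynkin's formula: with $g_s(p)=\tfrac1{\sqrt N}\sum_{j}\sum_{x\in B_j\setminus B_j^{k}}\nabla_N H_s(x/N)\tau^xF_k(p)$ one has $N^2\mathcal L_N g_s=N\,V_3^{\prime\prime}$, hence $N\int_0^tV_3^{\prime\prime}\,ds=g_t(p(t))-g_0(p(0))-\mathcal M^N_t-\int_0^t\partial_s g_s(p(s))\,ds$ with $\mathcal M^N$ a martingale of quadratic variation $\int_0^tN^2\sum_z a(p_z,p_{z+1})(X_{z,z+1}g_s)^2(p(s))\,ds$. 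The index set $\bigcup_j(B_j\setminus B_j^k)$ has $l\,s_k\le Ns_k/M$ elements and $F_k$ is a bounded Schwartz local function, so $\|g_s\|_\infty,\|\partial_sg_s\|_\infty\le C_Hs_k\|F_k\|_\infty\sqrt N/M$, whence the $g$--terms contribute $O\big(s_k^2\|F_k\|_\infty^2/(NM^2)\big)$, which vanishes as $N\to\infty$; for the martingale, Doob's inequality together with the observation that for fixed $z$ at most $O(s_k)$ of the retained indices $x$ make $X_{z,z+1}\tau^xF_k$ nonzero (so $\mathbb E_{\nu^N_y}[\langle\mathcal M^N\rangle_T]=O(Nls_k^3)$) and the prefactor $N^{-2}$, together with $l\le N/M$, produce an $O(s_k^3/M)$ bound. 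Thus both pieces vanish upon letting $N\to\infty$ and then $M\to\infty$.

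The computations here are all elementary; the two points needing some care are (i) the rewriting of $V_x$ as the exact gradient $-X_{x,x+1}[(a+\hat a)p_xp_{x+1}]$, without which one could not control $\|V_{jM}\|_{-1}$ via (\ref{variational-1}), and (ii) the bookkeeping in the martingale estimate for $V_3^{\prime\prime}$ --- one must keep track that the overlap count ($O(s_k)$ retained indices per site) and the powers of $N$, $M$ and $s_k$ combine so that the bound still tends to zero once $M$ is sent to infinity. I would regard (ii) as the only genuine, and still routine, obstacle.
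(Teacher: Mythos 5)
Your proof is correct. The paper disposes of this lemma with the single sentence ``the proof is similar to the preceding one,'' i.e.\ to the $V_2$ estimate, so your write-up essentially expands what is left implicit; the one substantive point where a verbatim copy of the $V_2$ argument would fail is exactly the one you identify: the first sum in $V_3$ has $l\sim N/M$ summands, growing with $N$, so the crude Cauchy--Schwarz that suffices for the boundary block $B_{l+1}$ (of size $<M$) would produce a diverging $l^2/N=N/M^2$, and one must instead invoke the disjoint-block version of Remark \ref{cenlimvarianremark} together with the uniform bound $\|V_{jM}\|_{-1}^2=O(N^{-2})$ (with respect to $N^2\mathcal L_N$) coming from the exact-gradient identity $V_x=-X_{x,x+1}[(a+\hat a(y))p_xp_{x+1}]$; you do this correctly. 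For the $\mathcal L_N\tau^xF_k$ part you deviate from the paper's template: instead of bounding its $\mathscr H_{-1}$ norm as in (\ref{V2.3}) --- which also works here, giving $O(s_k^2/M)$ once the disjoint-block remark is used again --- you run the It\^o/Dynkin argument of Lemma \ref{twoterms1} restricted to the boundary indices, obtaining $O(s_k^3/M)$. Both routes are valid; the $\mathscr H_{-1}$ route is shorter and is presumably what ``similar to the preceding one'' intends, while yours avoids the disjoint-block refinement for that piece. Your bookkeeping (powers of $N$, $M$, $s_k$, the $O(s_k)$ overlap count per site, and the order of limits $N\to\infty$, then $M\to\infty$, then $k\to\infty$, which is the order implicit in the section even though the displayed statement suppresses the limit in $M$) checks out.
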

\begin{proof}
The proof is similar to the preceding one.
\end{proof}

\begin{lemma}
\label{lemmaboltzmangibbs}
\begin{equation*}
\lim_{k \to \infty}\lim_{N \to \infty} \mathbb E_{\nu^N_y}\left[\sup_{0 \leq t \leq T}
\left(\int^t_0 V_1 \; ds\right)^2
\right]\;=\;0\;.
\end{equation*}
\end{lemma}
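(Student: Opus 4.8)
The plan is to bound $\mathbb E_{\nu^N_y}\big[\sup_{0\le t\le T}(\int_0^tV_1\,ds)^2\big]$ with the variance estimate of Proposition~\ref{cenlimvarian} in the refined block form of Remark~\ref{cenlimvarianremark}, and then to identify the resulting finite–volume $\mathscr H_{-1}$ norm through the computations of Section~\ref{CLTVandDC}. First I would freeze the slowly varying weight on each block, writing $\nabla_N H_s(x/N)=h_j(s)+\delta^{(j)}_x(s)$ with $h_j(s)$ the value at the left endpoint of $B_j$ and $\sup_{x\in B_j}|\delta^{(j)}_x(s)|\le CM/N$; this splits $V_1$ into $V_1^{\mathrm{main}}=\sqrt N\sum_{j=1}^{l}h_j(s)\,G_j$ and a weight–error term, where $G_j$ is the translate to $B_j$ of $G:=\sum_{x\in B'}V_x-\sum_{x\in B^{k}}\mathcal L_N\tau^xF_k$, which depends only on the coordinates of the disjoint block $B_j$. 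Using $V_x=-X_{x,x+1}\big[(a(p_x,p_{x+1})\mp\widehat a(y))p_xp_{x+1}\big]$ one sees that $G_j$ is a sum of single–bond $X$–gradients of local functions plus a generator term. Applying the last display of Remark~\ref{cenlimvarianremark}, recalling that the $\mathscr H_{-1}$ norm refers to the accelerated generator $N^2\mathcal L_N$ (which carries a factor $N^{-2}$), using translation invariance of $\nu^N_y$ to reduce the $l\simeq N/M$ equal terms, and using that conditioning a test function onto $B_j$ shows $\|G_j\|_{-1,\mathcal L_N}\le\|G\|_{-1,\mathcal L_B}$ ($\mathcal L_B$ the restriction of $\mathcal L_N$ to $B$), everything collapses to a bound of the form
$$\mathbb E_{\nu^N_y}\Big[\sup_{0\le t\le T}\Big(\int_0^t V_1^{\mathrm{main}}\,ds\Big)^2\Big]\ \le\ C_H\,\frac1M\,\|G\|^2_{-1,\mathcal L_B}\;+\;o_N(1),$$
with $C_H$ depending only on $H,T$. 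The weight–error is controlled by $(M/N)^2$ times a single–block norm of the same type, hence is $O(M^2/N^2)$ after summing over $j$; the $O(s_k)$ bonds per block on which $\mathrm{supp}(\tau^xF_k)$ crosses a block boundary, and the comparison of $\mathcal L_N\tau^xF_k$ with $\mathcal L_B\tau^xF_k$, produce a further $o_N(1)$ remainder (the leftover block $B_{l+1}$ and the inter–block bonds are not part of $V_1$, having been removed into $V_2$ and $V_3$).

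It then remains to show $\displaystyle\lim_{k\to\infty}\ \limsup_{M\to\infty}\ \tfrac1M\,\|G\|^2_{-1,\mathcal L_B}=0$. Here one writes $\sum_{x\in B'}V_x$ as a linear combination of $B_M=\sum_{x\in B'}W_{x,x+1}$ and the telescoped boundary term $A_M=p^2_M-p^2_1$, and $\sum_x\mathcal L_B\tau^xF_k=H_M^{F_k}$, so that $\|G\|^2_{-1,\mathcal L_B}$ is $\tfrac12$ of the central limit variance $\sigma^2\big(B_M+\widehat a(y)A_M-H_M^{F_k},\nu^B_y\big)$ of Section~\ref{CLTVandDC} (with the sign convention of that section). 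By the equivalence of ensembles (Appendix~\ref{equiensem}) this has, after dividing by $M$, the same $M\to\infty$ limit as $\tfrac1{2M}\Delta_{M,y}\big(B_M+\widehat a(y)A_M-H_M^{F_k},\cdot\big)$. Expanding into the six covariances and inserting the limits of Theorem~\ref{thecovariances} together with Theorem~\ref{covarA_N} (which supplies $\tfrac1{2M}\Delta_{M,y}(A_M,A_M)\to 4y^4/\widehat a(y)$), the contributions of $A_M$ cancel among themselves and what survives is a positive constant times $a\big(y,\tfrac12F_k\big)-\widehat a(y)\,y^4$, which tends to $0$ as $k\to\infty$ by the defining property (\ref{sequencetohat}) of the minimizing sequence $\{F_k/2\}$. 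Together with the two preceding lemmas this yields (\ref{Vi's}) and hence Theorem~\ref{BoltzGibbs}.

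The hard part is precisely the step in which the finite–volume norm $\tfrac1M\|G\|^2_{-1,\mathcal L_B}$ must be evaluated \emph{exactly} rather than merely estimated: a bond–by–bond application of the Cauchy–Schwarz inequality gives only an $O(1)$ bound on this quantity, which does \emph{not} vanish, because the current $W_{x,x+1}$ is not a discrete gradient (the nongradient difficulty) and because of the presence of the boundary term $A_M$ coming from the telescoping of the non-conserved $p^2_{x+1}-p^2_x$. What rescues the argument is, on the one hand, the equivalence of ensembles, which links the product measure $\nu^N_y$ of Remark~\ref{cenlimvarianremark} to the microcanonical variances of Section~\ref{CLTVandDC}, and, on the other hand, the sharp identity $\tfrac1{2M}\Delta_{M,y}(A_M,A_M)\to 4y^4/\widehat a(y)$ of Theorem~\ref{covarA_N} — the genuinely new ingredient here, whose proof rests on the characterization of the closure of $\{X_{0,1}\widetilde F\}$ obtained in Section~\ref{{H}_y}, on the sharp spectral gap of Appendix~\ref{spectralgap}, and on the spherical telescoping and integration identities (Corollary~\ref{telescopicorollary}, Lemma~\ref{intoverspheres}) that take the place of the ordinary telescopic sums available in the planar setting where the conserved quantity is linear.
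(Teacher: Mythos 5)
Your proposal is correct and follows essentially the same route as the paper: freeze the slowly varying weight on each block, apply Proposition~\ref{cenlimvarian} and Remark~\ref{cenlimvarianremark}, localize the resolvent to a single block by convexity of the Dirichlet form and translation invariance, pass to the microcanonical variances by equivalence of ensembles, and identify the limit of $\tfrac1{2M}\Delta_{M,y}(B_M+\widehat a(y)A_M-H_M^{F_k},\cdot)$ via Theorems~\ref{thecovariances} and~\ref{covarA_N}, concluding with (\ref{sequencetohat}). The only slight imprecision is the phrase that the $A_M$ contributions ``cancel among themselves'': in fact $\widehat a^2\Delta(A,A)$ and the cross term with $B_M$ combine to produce exactly the $-4y^4\widehat a(y)$ needed against $4a(y,\tfrac12F_k)$, which is the quantity you correctly state survives.
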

\begin{proof}
Recall that the expectation in the last expression is by definition
{\small
$$
N\mathbb E_{\nu^N_y}\left[\sup_{0 \leq t \leq T}
\left(
\int^t_0\sum_{j=1}^{l}\left\{
\sum_{x \in B_j^{\prime}}\nabla_N H_s(x/N) V_x
-\sum_{x \in B_j^{k} }  \nabla_N H_s(x/N) \mathcal{L}_N \tau^xF_k\;
 ds \right\} \right)^2
\right].
$$
}
The smoothness of the function $H$ allows to replace $\nabla_N H_s(x/N)$ into each sum in the last expression by $\nabla_N H_s(x^{*}_j/N)$, where $x^{*}_j \in B_j$ (for instance, take $x^{*}_j=(j-1)K + 1$ ), obtaining
$$
N\mathbb E_{\nu^N_y}\left[\sup_{0 \leq t \leq T}
\left(
\int^t_0\sum_{j=1}^{l}\nabla_N H_s(x^{*}_j/N)\left\{
\sum_{x \in B_j^{\prime}} V_x
-\sum_{x \in B_j^{k} } \mathcal{L}_N \tau^xF_k\;
 ds \right\} \right)^2
\right].
$$
By proposition \ref{cenlimvarian} and Remark \ref{cenlimvarianremark}, the quantity in the preceding line is bounded above by 
$$
\frac{C_H}{N} \sum_{j=1}^{l}
 \left\langle   \sum_{x \in B_j^{\prime}} V_x - \sum_{x \in B_j^{k} } \mathcal{L}_N \tau^xF_k
\;,\;(-\mathcal L_{N})^{-1}  \sum_{x \in B_j^{\prime}} V_x - \sum_{x \in B_j^{k} } \mathcal{L}_N \tau^xF_k
 \right\rangle_{\nu_y}\;,
$$
Using the variational formula for the $\mathscr H_{-1}$ norm given in (\ref{variational-1}) and the convexity
of the Dirichlet form, we are able to replace $(-\mathcal L_{N})^{-1}$ by $(-\mathcal L_{B_j^{\prime}})^{-1}$ in the expression above. In addition, by translation invariance of the measure $\nu^N_y$ we can bound this expression by 
$$
C_H\frac{l}{N}
\left\langle   \sum_{x \in B_1^{\prime}} V_x - \sum_{x \in B_1^{k} } \mathcal{L}_N \tau^xF_k
\;,\;(-\mathcal L_{B_1^{\prime}})^{-1}  \sum_{x \in B_1^{\prime}} V_x - \sum_{x \in B_1^{k} } \mathcal{L}_N \tau^xF_k
 \right\rangle_{\nu_y}.
$$
By the equivalence of ensembles stated in Appendix \ref{equiensem} and the fact that $\frac{l}{N} \sim \frac{1}{M}$, the  limit superior, as $N$ goes to infinity, of the last expression is bounded above by
{\small
$$
C_H \limsup_{M\to \infty}\frac{1}{M}
\left\langle   \sum_{x \in B_1^{\prime}} V_x - \sum_{x \in B_1^{k} } \mathcal{L}_N \tau^xF_k
\;,\;(-\mathcal L_{B_1^{\prime}})^{-1}  \sum_{x \in B_1^{\prime}} V_x - \sum_{x \in B_1^{k} } \mathcal{L}_N \tau^xF_k
 \right\rangle_{\nu_{M,\sqrt{M}y}}\!\!\!\!\!\!\!\!\!\!\!\!\!\!\!\!.
$$
}
The last line can be written as
\begin{equation}
\label{vartotal}
C_H \limsup_{M\to \infty}\frac{1}{M}\Delta_{M,y}(B_M+\widehat{a}(y)A_M-H_M^{F_k},\; B_M+\widehat{a}(y)A_M-H_M^{F_k}) ,
\end{equation}
by using the notation introduced in Section \ref{CLTVandDC}. For that, it suffices to replace M by 2M+1 from the beginning of this section. Here $B_M$ correspond to the current in a block, and is not to be confused with the notation used for the blocks themselves.

On the other hand, it is easy to check that the variance appearing in (\ref{vartotal}) is equal to
{\small
\begin{align*}
(\widehat{a}(y))^2\Delta_{M,y}(A_M,A_M) &+ \Delta_{M,y}(B_M,B_M) +\Delta_{M,y}(H^{F_k}_M,H^{F_k}_M)\\
&-2\widehat{a}(y)\Delta_{M,y}(A_M,B_M) + \Delta_{M,y}(A_M,H^{F_k}_M) -\Delta_{M,y}(B_M,H^{F_k}_M)\;.
\end{align*}
}
Therefore, thanks to Theorem \ref{thecovariances}  and Theorem \ref{covarA_N}, if we divide by $M$ and take the limit as $M$ goes to infinity at both sides of last expression, we can conclude that
\begin{align*}
\lim_{ M \to \infty}\frac{1}{M}\Delta_{M,y}(B_M+\widehat{a}(y)A_M-H_M^{F_k})&=
4a(y,1/2{F_k})-4y^4\widehat{a}(y)\;.
\end{align*}
By the definition of the sequence $\{F_k\}_{k \geq  1}$  (see (\ref{sequencetohat})), the limit as $k$ goes to infinity of the last term is equal to zero.
\end{proof}

\section{Tightness}
\label{Tightness}
Let us firstly introduce some notation in order to define a space in which fluctuations take place and in which we will be able to prove tightness. Let $-\Delta$ be the positive operator, essentially self-adjoint on $L^2([0,1])$ defined by
\begin{align*}
\textit{Dom}(-\Delta)&=C_0^2([0,1]),\\
-\Delta&= -\frac{d^2}{dx^2}\;,
\end{align*}
where $C_0^2([0,1])$ denotes the space of twice continuously differentiable functions on $(0,1)$ which are continuous in $[0,1]$ and which vanish at the boundary. It is well known that its normalized eigenfunctions are given by  $e_n(x)=\sqrt 2 \sin(\pi nx)$ with corresponding eigenvalues $\lambda_n = (\pi n)^2$ for every $n\in \mathbb N\ $, moreover, $\{e_n\}_{n\in \mathbb N}$ forms an orthonormal basis of $L^2([0,1])$. 

For any nonnegative $k$ denote by $\mathcal H_k$ the Hilbert space obtained as the completion of $C_0^2([0,1])$ endowed with the inner product
$$
\langle f,g \rangle_k =\langle f,(-\Delta)^kg \rangle\;,  
$$ 
where $\langle , \rangle$ stands for the inner product in $L^2([0,1])$. We have from the spectral theorem for self-adjoint operators that 
\begin{equation}
\label{defH_k}
\mathcal H_k = \{f \in L^2([0,1]): \sum_{n=1}^{\infty}n^{2k}\langle f,e_n\rangle < \infty \}\;,
\end{equation}
and
\begin{equation}
\label{innerH_k}
\langle f,g \rangle_k = \sum_{n=1}^{\infty}(\pi n)^{2k}\langle f,e_n\rangle\langle g,e_n\rangle \;.
\end{equation}
This is valid also for negative $k$. In fact, if we denote the topological dual of $\mathcal H_k$
 by $\mathcal H_{-k}$ we  have
\begin{equation}
\label{defH_{-k}}
\mathcal H_{-k} = \{f \in \mathcal D'([0,1]): \sum_{n=1}^{\infty}n^{-2k}f(e_n)^2 < \infty \}\;.
\end{equation}
The $\mathcal H_{-k}$-inner product between the distributions $f$ and $g$ can be written as
\begin{equation}
\label{innerH_{-k}}
\langle f,g \rangle_{-k} = \sum_{n=1}^{\infty}(\pi n)^{-2k} f(e_n) g(e_n) \;,
\end{equation}

Denote by $\mathbb Q_N$ the probability measure on $C([0,T],\mathcal H_{-k})$ induced by the energy fluctuation field $Y_t^N$ and the Markov process $\{p^N(t),t \geq 0\}$ defined in Section \ref{NotandRes}, starting from the equilibrium probability measure $\nu^N_{y}$.

We are now ready to state the main result of this section, which proof is divided in lemmas. 
\begin{theorem}
\label{Compactness}
The sequence $\{\mathbb{Q}_{N}\}_{N\geq 1}$ is tight in $C([0,T],\mathcal H_{-k})$ for $k>\frac{3}{2}$ .
\end{theorem}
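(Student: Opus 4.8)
Fix $k_0$ with $3/2<k_0<k$; such a $k_0$ exists precisely because $k>3/2$. The plan is to reduce tightness in $C([0,T],\mathcal H_{-k})$, via the standard criterion for Hilbert-space-valued processes (cf.\ \cite{KL}), to the two statements: \textbf{(a)} for each $n\ge1$ the real-valued sequence $\{Y^N_\cdot(e_n)\}_{N\ge1}$ is tight in $C([0,T],\mathbb R)$; and \textbf{(b)} $\sup_{N\ge1}\mathbb E_{\nu^N_y}\big[\sup_{0\le t\le T}\|Y^N_t\|^2_{-k_0}\big]<\infty$. Indeed (b) gives compact containment because bounded subsets of $\mathcal H_{-k_0}$ are compact in $\mathcal H_{-k}$, while (a) and (b) together give equicontinuity in $\mathcal H_{-k}$: write $\|Y^N_t-Y^N_s\|^2_{-k}=\sum_{n\le m}(\pi n)^{-2k}(Y^N_t(e_n)-Y^N_s(e_n))^2+\sum_{n>m}(\cdots)$, estimate the tail by $4(\pi m)^{-2(k-k_0)}\sup_{0\le t\le T}\|Y^N_t\|^2_{-k_0}$ (uniformly small in $N$ for $m$ large, by (b)), and control the remaining finite sum by (a).

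For \textbf{(a)} I would invoke Aldous' criterion. The martingale identity (\ref{martingale}) with the time-independent function $e_n$ reads $Y^N_t(e_n)=Y^N_0(e_n)-\int_0^t\sqrt N\sum_x\nabla_N e_n(x/N)W_{x,x+1}(s)\,ds+M^{e_n}_N(t)$; the initial term is $L^2$-bounded by stationarity, and the martingale $M^{e_n}_N$ has $\mathbb E\langle M^{e_n}_N\rangle(T)\le C\,\frac1N\sum_x|\nabla_N e_n(x/N)|^2\,\mathbb E_{\nu_y}[a(p_0,p_1)p_0^2p_1^2]\le C(\pi n)^2$, so Doob's inequality yields the Aldous estimate for it. For the current integral I use the Boltzmann--Gibbs principle in the strong form of Theorem \ref{BoltzGibbs} and Lemma \ref{lemmaboltzmangibbs}: replacing $W_{x,x+1}$ by $\hat a(y)(p^2_{x+1}-p^2_x)+\mathcal L_N\tau^xF_k$, the residual has time integral over $[\tau,\tau+\delta]$ bounded in $L^2$ by $C_{n,k}\,\delta^{1/2}$ (Proposition \ref{cenlimvarian} plus stationarity); a summation by parts turns the $\hat a(y)$-term into $\hat a(y)\int_\tau^{\tau+\delta}Y^N_s(\Delta_N e_n)\,ds$, whose $L^1$-norm is $O(\delta)$ by stationarity and $\mathbb E_{\nu^N_y}[Y^N_0(\Delta_N e_n)^2]=O(1)$; and the $\mathcal L_N\tau^xF_k$-term is handled via the It\^o expansion of $\zeta_{N,F_k}$ as in the proof of Lemma \ref{twoterms1}, its contribution being $O(N^{-1/2})$. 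This verifies the Aldous condition for $Y^N_\cdot(e_n)$.

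For \textbf{(b)} I would use the representation (\ref{redependingtimemart}) with a \emph{discrete} heat flow as test function. Since $\Delta_N e_n=-\mu^N_n e_n$ on the grid, with $\mu^N_n:=2N^2(1-\cos(\pi n/N))\asymp(\pi n)^2$, the choice $H_s:=e^{-\hat a(y)\mu^N_n(t-s)}e_n$ makes $\partial_sH_s+\hat a(y)\Delta_N H_s\equiv0$, so $Y^N_t(e_n)=e^{-\hat a(y)\mu^N_n t}Y^N_0(e_n)-I^1_{N,F_k}(e_n^{\cdot t})-I^2_{N,F_k}(e_n^{\cdot t})-M^1_{N,F_k}(e_n^{\cdot t})-M^2_{N,F_k}(e_n^{\cdot t})$ with no $O(1/N)$ remainder. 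I then bound $\mathbb E_{\nu^N_y}[\sup_{0\le t\le T}(\,\cdot\,)^2]$ of each summand: the first equals $\mathbb E[Y^N_0(e_n)^2]\le Cy^4$, uniformly in $n,N$; for $I^1$, an integration by parts gives $\sup_{t\le T}|I^1_{N,F_k}(e_n^{\cdot t})|\le 2\sup_{s\le T}\big|\int_0^s\sqrt N\sum_x\nabla_N e_n(x/N)[W_{x,x+1}-\hat a(y)(p^2_{x+1}-p^2_x)-\mathcal L_N\tau^xF_k]\,du\big|$, whose square has expectation $\le C(\pi n)^2\kappa_k$ by Proposition \ref{cenlimvarian} together with the block estimate inside the proof of Lemma \ref{lemmaboltzmangibbs}, $\kappa_k$ bounded in $k$; for $M^1,M^2$, the processes $Z_t=-M^i_{N,F_k}(e_n^{\cdot t})$ solve $dZ_t=-\hat a(y)\mu^N_n Z_t\,dt+g_n(t)\,dB_t$ with $\mathbb E[g_n(t)^2]\le C_k(\pi n)^2$, and the elementary Ornstein--Uhlenbeck maximal bound (It\^o for $Z^2_t$ plus Burkholder--Davis--Gundy, uniformly in $\mu^N_n\ge0$) gives $\mathbb E[\sup_{t\le T}(M^i_{N,F_k}(e_n^{\cdot t}))^2]\le C_k(\pi n)^2$; and $\mathbb E[\sup_{t\le T}(I^2_{N,F_k}(e_n^{\cdot t}))^2]\le C_k(\pi n)^2/N$ by the $\zeta$-argument. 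Summing against the weights $(\pi n)^{-2k_0}$ gives $\mathbb E_{\nu^N_y}[\sup_{0\le t\le T}\|Y^N_t\|^2_{-k_0}]\le C\sum_n(\pi n)^{-2k_0}+C_k\sum_n(\pi n)^{2-2k_0}$, both series convergent because $k_0>3/2$, and this bound is uniform in $N$ (with $k$ fixed), proving (b).

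The essential analytic content is already contained in Sections \ref{CLTVandDC} and \ref{Boltzmann-Gibbs}: the non-gradient Varadhan decomposition and the Boltzmann--Gibbs estimate are what allow one to replace the microscopic current by a gradient plus a total $\mathcal L_N$-derivative, and their proofs are the hard part. What remains is bookkeeping of a somewhat delicate sort: securing the $\sup_{0\le t\le T}$-estimates \emph{uniformly in} $N$ and with exactly one power of $\lambda_n=(\pi n)^2$ --- the common order of $\langle M^{e_n}_N\rangle$ and of $\|\nabla_N e_n\|^2_\infty$ --- which is precisely what forces the threshold $k>3/2$; reconciling the iterated limit ($N\to\infty$, then $k\to\infty$) in the Boltzmann--Gibbs principle with the need for an $N$-uniform moment bound, which is harmless since tightness only requires finiteness, so $F_k$ may be frozen; and discarding the minor mismatch between the periodic field and the Dirichlet eigenbasis $\{e_n\}$, which contributes only lower-order boundary terms.
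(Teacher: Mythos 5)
Your overall architecture is sound and, despite the different packaging, rests on the same pillars as the paper: reduction to the Fourier modes $e_n$ with the tail of the $\mathcal H_{-k}$-series controlled by a $\sup_{0\le t\le T}$ moment bound (this is where $k>3/2$ enters, exactly as in the paper), the $\mathscr H_{-1}$ variance estimate of Proposition \ref{cenlimvarian}, the Boltzmann--Gibbs decomposition of the current, the $\zeta$-type It\^o argument for the $\mathcal L_N\tau^xF_k$ term, and martingale maximal inequalities. Two genuine differences are worth recording. First, for the uniform bound (b) the paper's Lemma \ref{keyestimate} is much cheaper than your heat-semigroup route: since $W_{x,x+1}=-X_{x,x+1}[a(p_x,p_{x+1})p_xp_{x+1}]$, the variational formula (\ref{variational-1}) bounds $\mathbb E[\sup_t(Z^N_t(e_n))^2]$ by $C(\pi n)^2$ directly, with no Boltzmann--Gibbs input, no block decomposition and no discrete heat flow; your version works (the integration by parts in time that trades the factor $\mu^N_n$ for the bound $\int_0^t\mu^N_ne^{-\hat a(y)\mu^N_n(t-s)}ds\le 1$ is the right move, and the same trick underlies your Ornstein--Uhlenbeck maximal bound), but it imports the whole machinery of Sections \ref{CLTVandDC}--\ref{Boltzmann-Gibbs} where none is needed. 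Second, for the path regularity the paper proves the modulus-of-continuity condition (\ref{tight2}) directly, using Garcia--Rodemich--Rumsey plus the $2m$-th moment bound of Lemma \ref{martfluctvaria} for the martingale part, rather than Aldous' criterion.

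The one step you should repair is the treatment of the Boltzmann--Gibbs residual in your Aldous estimate. The bound ``$\mathbb E[(\int_\tau^{\tau+\theta}V_1\,ds)^2]\le C_{n,k}\,\delta$ by Proposition \ref{cenlimvarian} plus stationarity'' is valid when $\tau$ is deterministic, but for a stopping time $\tau$ neither stationarity nor Proposition \ref{cenlimvarian} (which controls $\sup_{0\le t\le T}$ of the integral started at time $0$) delivers it: the law of $p(\tau)$ is not $\nu^N_y$, and the naive covering of $[0,T]$ by $O(\delta^{-1})$ windows loses exactly the factor $\delta$ you need. This is the genuinely delicate point that the paper's proof is organized around: there the residual's contribution to $w(Y^N,\delta)$ is simply bounded by $4\sup_{0\le t\le T}\bigl(\int_0^tV_1\,ds\bigr)^2$, with no smallness in $\delta$ claimed at all, and one concludes because this quantity vanishes in the iterated limit $N\to\infty$ then $k\to\infty$ (the strong form of Theorem \ref{BoltzGibbs}). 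You should do the same: fix $\eta>0$, choose $k$ so that the full-sup expectation of the residual is eventually below $\eta$, and only then verify the $\delta$-smallness for the remaining, $k$-dependent terms (gradient, $\mathcal L_N\tau^xF_k$, martingales), for which your estimates are fine. Alternatively you would need a bona fide stopping-time version of the Kipnis--Varadhan bound (e.g.\ via a forward--backward martingale decomposition), which is not among the tools the paper provides.
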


In order to establish the tightness of the sequence $\{\mathbb{Q}_{N}\}_{N\geq 1}$ of probability measures on  $C([0,T],\mathcal H_{-k})$, it suffices to check the following two conditions (\textit{c.f.} \cite{KL} p.299),
\begin{align}
 &
\label{tight1}
\lim_{A \to \infty}\limsup_{N \to \infty}\mathbb P_{\nu_y}\left[\sup_{0_\leq t \leq T} ||Y^N_t||_{-k}>A\right] = 0\;,\\
 & 
\label{tight2}
 \lim_{\delta \to 0}\limsup_{N \to \infty}\mathbb P_{\nu_y}\left[w(Y^N,\delta)> \epsilon\right] = 0\;,
\end{align}
where the modulus of continuity $w(Y,\delta)$ is defined by
\[
w(Y,\delta)=\sup_{\substack{|t-s|<\delta\\0\leq s <t\leq T}}||Y_t - Y_s||_{-k}\;.
\]
Let us recall that for every function  $H\in C^2(\mathbb{T})$ we have
\begin{equation}
\label{twoterms}
Y_t^N(H)\;=\; Y_0^N(H) - Z_t^N(H)- M_t^N(H)\;,
\end{equation}
where,
\begin{align*}
Z_t^N(H) &=\; \int^t_0 \sqrt{N} \sum_{x \in \mathbb T_N}\nabla_N H(x/N) W_{x,x+1}(s)ds\;,\\
M_t^N(H)& =\; \int^t_0 \frac{1}{\sqrt{N}} \sum_{x \in \mathbb T_N}\nabla_N H(x/N)\sigma(p_x(s),p_{x+1}(s))\;dB_{x,x+1}(s)\;.
\end{align*} 
The quadratic variation of the martingale $\{M_t^N(H)\}_{t\geq 0}$ is given by
\begin{equation*}
\langle M_t^N(H)\rangle(t)=\frac{1}{N}\sum_{x\in\mathbb{T}_N}\int^{t}_{0}|\nabla_N H(\frac{x}{N},s)|^2a(p_x,p_{x+1})p^2_xp^2_{x+1}ds\;.
\end{equation*}
We begin by giving the following key estimate. 
\begin{lemma} There exist a constant $B=B(y,T)$ such that for every function  $H\in C^2(\mathbb{T})$ and every $N \geq 1$  
\label{keyestimate}
\begin{equation*}
\mathbb E_{\nu_y}\left[\sup_{0\leq t\leq T} (Y_t^N(H))^2\right]\leq
 B \left\{\frac{1}{N}\sum_{x \in \mathbb T_N}H(x/N)^2 + \frac{1}{N}\sum_{x \in \mathbb T_N}(\nabla_N H(x/N))^2 \right\}\;.
\end{equation*}
\end{lemma}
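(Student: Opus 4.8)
The plan is to bound the three pieces on the right-hand side of the decomposition \eqref{twoterms} separately, since $\mathbb E_{\nu_y}[\sup_{0\le t\le T}(Y_t^N(H))^2]\le 3\mathbb E_{\nu_y}[(Y_0^N(H))^2]+3\mathbb E_{\nu_y}[\sup_t (Z_t^N(H))^2]+3\mathbb E_{\nu_y}[\sup_t (M_t^N(H))^2]$. For the initial term, stationarity gives $\mathbb E_{\nu_y}[(Y_0^N(H))^2]=\frac1N\sum_{x,z}H(x/N)H(z/N)\,\mathrm{Cov}_{\nu_y}(p_x^2,p_z^2)=2y^4\cdot\frac1N\sum_x H(x/N)^2$ because the $p_x$ are i.i.d.\ under $\nu_y^N$, which is already of the desired form. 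For the martingale term $M_t^N(H)$, I would apply Doob's $L^2$ inequality to get $\mathbb E_{\nu_y}[\sup_t (M_t^N(H))^2]\le 4\,\mathbb E_{\nu_y}[\langle M^N(H)\rangle(T)]$, and then use stationarity together with the explicit quadratic variation, together with $a\le C$ and $\mathbb E_{\nu_y}[p_x^2p_{x+1}^2]=y^4$, to bound this by $4CT y^4\cdot\frac1N\sum_x (\nabla_N H(x/N))^2$, again of the required form.

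The real work is the current term $Z_t^N(H)$. Here I would invoke Proposition \ref{cenlimvarian} (with $\pi=\nu_y^N$), which gives $\mathbb E_{\nu_y}[\sup_t(\int_0^t \sqrt N\sum_x\nabla_N H(x/N)W_{x,x+1}\,ds)^2]\le 24T\,\|\sqrt N\sum_x\nabla_N H(x/N)W_{x,x+1}\|_{-1}^2$, where the $\mathscr H_{-1}$ norm is taken with respect to $N^2\mathcal L_N$ (so there is a compensating $N^{-2}$ in the norm, consistent with the time-acceleration). Unwinding the variational formula \eqref{variational-1} for $\|\cdot\|_{-1}$, using $W_{x,x+1}=-X_{x,x+1}[a(p_x,p_{x+1})p_xp_{x+1}]$ and integrating by parts exactly as in the proof of the $V_2$ lemma in Section \ref{Boltzmann-Gibbs}, the cross terms between different bonds essentially vanish (the $X_{x,x+1}$ only couple neighbours), and the elementary inequality $2ab\le A^{-1}a^2+Ab^2$ yields a bound of the form $C\,\frac1N\sum_x (\nabla_N H(x/N))^2\,\mathbb E_{\nu_y}[a(p_x,p_{x+1})p_x^2p_{x+1}^2]\le CC'y^4\,\frac1N\sum_x(\nabla_N H(x/N))^2$. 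Collecting the three estimates and setting $B=B(y,T)$ to be the maximum of the constants produced gives the claim.

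The main obstacle I anticipate is the bookkeeping in the current term: one must be careful that applying Proposition \ref{cenlimvarian} to the full sum $\sum_x \nabla_N H(x/N)W_{x,x+1}$ (rather than to a single bond) does not lose a factor of $N$. The point is that the $\mathscr H_{-1}$ norm of this sum is controlled bond-by-bond precisely because the Dirichlet form is a sum over bonds and the variational problem \eqref{variational-1} decouples after integration by parts — the same mechanism used in the $V_2$ computation — so the resulting bound is $O(\frac1N\sum_x(\nabla_N H(x/N))^2)$ and not $O(\sum_x(\nabla_N H(x/N))^2)$. Verifying this decoupling carefully, together with the correct powers of $N$ coming from the $N^2$ time acceleration, is the delicate step; everything else is routine.
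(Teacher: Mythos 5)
Your proposal is correct and follows essentially the same route as the paper: the decomposition (\ref{twoterms}) with the exact Gaussian computation for $Y_0^N(H)$, Doob's inequality for the martingale part, and Proposition \ref{cenlimvarian} combined with the variational formula (\ref{variational-1}), integration by parts via $W_{x,x+1}=-X_{x,x+1}[a(p_x,p_{x+1})p_xp_{x+1}]$, and the weighted elementary inequality to absorb half the Dirichlet form for the current term. The ``decoupling'' you worry about is exactly the paper's mechanism of applying Schwarz bond by bond inside the single variational problem, so no factor of $N$ is lost.
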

\begin{proof} From the definition of the fluctuation field it is clear that
\begin{equation}
\mathbb E_{\nu_y}\left[(Y_0^N(H))^2\right] = 2y^4\frac{1}{N}\sum_{x \in \mathbb T_N}H(x/N)^2\;,
\end{equation}
and by Doob's inequality together with the fact that $a(\cdot,\cdot)\leq C$ we have
\begin{equation}
\mathbb E_{\nu_y}\left[\sup_{0\leq t\leq T} (M_t^N(H))^2\right] \leq CTy^4\frac{1}{N}\sum_{x \in \mathbb T_N}(\nabla_N H(x/N))^2\;.
\end{equation}
From Proposition \ref{cenlimvarian} of Section \ref{Boltzmann-Gibbs} and the variational formula given in (\ref{variational-1}) we obtain
{\small
\begin{equation*}
\mathbb E_{\nu_y}\left[
\sup_{0\leq t\leq T}\left(Z_t^N(H)\right)^2\right]
\leq
\frac{24T}{N}\sup_{g \in D(\mathcal L)}\left\{\langle 2\sum_{x \in \mathbb T_N}\nabla_N H(x/N) W_{x,x+1}g\rangle_{\nu_y} +
\langle g,\mathcal Lg\rangle_{\nu_y}
\right\}\;.
\end{equation*}
}
After integration by parts, the first term in the expression into braces can be written as
\begin{equation*}
-2\langle \sum_{x \in \mathbb T_N}\nabla_N H(x/N) a(p_x,p_{x+1})p_xp_{x+1}X_{x,x+1}(g)\rangle_{\nu_y}\;,
\end{equation*}
which by Schwartz inequality is bounded above by
\begin{equation*}
2\langle \sum_{x \in \mathbb T_N}(\nabla_N H(x/N))^2 a(p_x,p_{x+1})p^2_xp^2_{x+1}\rangle_{\nu_y}
+
\frac{1}{2}\langle \sum_{x \in \mathbb T_N} a(p_x,p_{x+1})(X_{x,x+1}(g))^2\rangle_{\nu_y}.
\end{equation*}
Thus,
\begin{equation*}
\mathbb E_{\nu_y}\left[
\sup_{0\leq t\leq T}\left(Z_t^N(H)\right)^2\right]
\leq
48TCy^4\frac{1}{N}\sum_{x \in \mathbb T_N}(\nabla_N H(x/N))^2\;.
\end{equation*}
\end{proof}
\begin{corollary}
Condition (\ref{tight1}) is valid for $k>\frac{3}{2}$.
\end{corollary}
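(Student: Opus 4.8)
The plan is to combine Chebyshev's inequality with the uniform second-moment bound of Lemma \ref{keyestimate}, applied to each eigenfunction $e_n$, and then to sum over $n$ exploiting the Sobolev weights $(\pi n)^{-2k}$. First, by Chebyshev's inequality,
\[
\mathbb P_{\nu_y}\left[\sup_{0\leq t\leq T}\|Y_t^N\|_{-k} > A\right] \;\leq\; \frac{1}{A^2}\,\mathbb E_{\nu_y}\left[\sup_{0\leq t\leq T}\|Y_t^N\|_{-k}^2\right],
\]
so it suffices to bound the right-hand expectation uniformly in $N$. Using the spectral representation (\ref{innerH_{-k}}) of the $\mathcal H_{-k}$-norm together with the monotone convergence theorem and the elementary inequality $\sup_t \sum_n a_n(t)\leq \sum_n \sup_t a_n(t)$ for nonnegative terms, one gets
\[
\mathbb E_{\nu_y}\left[\sup_{0\leq t\leq T}\|Y_t^N\|_{-k}^2\right] \;\leq\; \sum_{n=1}^{\infty}(\pi n)^{-2k}\,\mathbb E_{\nu_y}\left[\sup_{0\leq t\leq T}(Y_t^N(e_n))^2\right].
\]

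Next I would apply Lemma \ref{keyestimate} with $H=e_n$. Since $|e_n|\leq\sqrt 2$, the first Riemann sum there is bounded by $2$ for every $N$; since $|e_n'|\leq \sqrt 2\,\pi n$, the mean value theorem gives $|\nabla_N e_n(x/N)|\leq \sqrt 2\,\pi n$, so the second Riemann sum is bounded by $2\pi^2 n^2$. Hence there is a constant $C=C(y,T)$, independent of $N$ and $n$, with
\[
\mathbb E_{\nu_y}\left[\sup_{0\leq t\leq T}(Y_t^N(e_n))^2\right] \;\leq\; C\,n^2 .
\]
Plugging this into the previous display,
\[
\mathbb E_{\nu_y}\left[\sup_{0\leq t\leq T}\|Y_t^N\|_{-k}^2\right] \;\leq\; C\sum_{n=1}^{\infty}(\pi n)^{-2k}\,n^2 \;=\; C\,\pi^{-2k}\sum_{n=1}^{\infty} n^{2-2k},
\]
and the last series converges precisely because $k>\tfrac32$ makes $2-2k<-1$. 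This yields a bound $K=K(y,T,k)$, independent of $N$, so that
\[
\limsup_{N\to\infty}\mathbb P_{\nu_y}\left[\sup_{0\leq t\leq T}\|Y_t^N\|_{-k} > A\right] \;\leq\; \frac{K}{A^2},
\]
which tends to $0$ as $A\to\infty$; this is exactly (\ref{tight1}).

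The argument is essentially routine once Lemma \ref{keyestimate} is available; the only point that requires a little care is checking that the discrete gradient $\nabla_N e_n$ is bounded uniformly in $N$ by a constant times $n$ (rather than, say, growing in $N$), so that the resulting series in $n$ converges under exactly the hypothesis $k>\tfrac32$ that appears in the statement. Note also that each $e_n$ vanishes at the endpoints of $[0,1]$, hence extends to a $C^2$ periodic function on $\mathbb T$, so Lemma \ref{keyestimate} does apply to $H=e_n$.
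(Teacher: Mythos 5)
Your proof is correct and follows essentially the same route as the paper: apply the spectral representation (\ref{innerH_{-k}}) of the $\mathcal H_{-k}$-norm, exchange the supremum with the sum, invoke Lemma \ref{keyestimate} with $H=e_n$ to get a bound of order $n^2$ uniform in $N$, and conclude by summability of $n^{2-2k}$ for $k>\tfrac32$ together with Chebyshev's inequality. The extra care you take in bounding the discrete gradient $\nabla_N e_n$ uniformly in $N$ is a useful explicit check that the paper leaves implicit.
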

\begin{proof}
From (\ref{innerH_{-k}}) and Lemma \ref{keyestimate} we obtain
\begin{align*}
\limsup_{N \to \infty}\mathbb E_{\nu_y}\left[\sup_{0_\leq t \leq T} ||Y^N_t||^2_{-k}\right]&\leq 
 \sum_{n=1}^{\infty}(\pi n)^{-2k}\limsup_{N \to \infty} \mathbb E_{\nu_y}\left[\sup_{0\leq t\leq T} Y_t^N(e_n)^2\right]\\
&\leq B \sum_{n=1}^{\infty}(\pi n)^{-2k} (1+(\pi n)^2)\;.
\end{align*}
The proof is then concluded by using Chebychev's inequality.
\end{proof}
In view of (\ref{innerH_{-k}}) and Lemma \ref{keyestimate} we reduce the problem of equicontinuity as follows.
\begin{align*}
\limsup_{N \to \infty}\mathbb E_{\nu_y}\left[w(Y^N,\delta)\right]&\leq 
 \sum_{n=1}^{\infty}(\pi n)^{-2k} \limsup_{N \to \infty}\mathbb E_{\nu_y}\left[\sup_{\substack{|t-s|<\delta\\0\leq s <t\leq T}}(Y^N_t(e_n) - Y^N_s(e_n))^2\right]\\
&\leq 4 \sum_{n=1}^{\infty}(\pi n)^{-2k} \limsup_{N \to \infty}\mathbb E_{\nu_y}\left[\sup_{\substack{|t-s|<\delta\\0\leq s <t\leq T}}(Y^N_t(e_n))^2\right]\\
&\leq B \sum_{n=1}^{\infty}(\pi n)^{-2k} (1+(\pi n)^2)\;.
\end{align*}
Therefore, the series appearing in the first line of the above expression is uniformly convergent in $\delta$ if $k>\frac{3}{2}$. Thus, in order to verify condition (\ref{tight2}) it is enough to prove 
\begin{equation*}
 \lim_{\delta \to 0}\limsup_{N \to \infty}\mathbb E_{\nu_y}\left[\sup_{\substack{|t-s|<\delta\\0\leq s <t\leq T}}(Y^N_t(e_n) - Y^N_s(e_n))^2\right]
 = 0\;,
\end{equation*}
for every $n\geq 1$.

We analyze separately the terms corresponding to  $M_t^N$ and $Z_t^N$ (see \ref{twoterms}).  In next lemma we state a global estimate for the martingale part. 
\begin{lemma}
\label{martfluctvaria}
For every function $H$ and every $m \in\mathbb N$, there exists a constant $C$ depending only on $m$ such that
\begin{equation*}
\mathbb E_{\nu^N_y} [\;| M_t^N(H)|^{2m}\;] \leq C y^{2m} t^m
 \left\{ \frac{1}{N}\sum_{x\in\mathbb{T}_N}|\nabla_N H(\frac{x}{N},s)|^2 \right\}^m\:.
\end{equation*}
\end{lemma}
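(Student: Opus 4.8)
The plan is to reduce the estimate to a moment bound on the quadratic variation of $M^N(H)$ via the Burkholder--Davis--Gundy inequality, and then to control that moment by two successive applications of Jensen's inequality together with the bound $a\le C$ and the explicit Gaussian moments of $\nu_y^N$. Concretely, BDG gives a constant $C_m$ depending only on $m$ with
$$
\mathbb E_{\nu_y^N}\bigl[\,|M_t^N(H)|^{2m}\,\bigr]\;\le\;C_m\,\mathbb E_{\nu_y^N}\bigl[\langle M^N(H)\rangle(t)^m\bigr],
$$
and we recall from the quadratic variation computed above that
$$
\langle M^N(H)\rangle(t)=\frac1N\sum_{x\in\mathbb T_N}|\nabla_N H(x/N)|^2\int_0^t a(p_x(s),p_{x+1}(s))\,p_x^2(s)p_{x+1}^2(s)\,ds.
$$

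For the moment of the quadratic variation, set $a_N=\frac1N\sum_{x}|\nabla_N H(x/N)|^2$ and $\theta_x=|\nabla_N H(x/N)|^2/(N a_N)$, so that $\theta_x\ge 0$ and $\sum_x\theta_x=1$. Applying Jensen's inequality in $x$ with the probability weights $\theta_x$ to the convex map $r\mapsto r^m$ yields
$$
\langle M^N(H)\rangle(t)^m\;\le\;a_N^m\sum_{x\in\mathbb T_N}\theta_x\left(\int_0^t a(p_x,p_{x+1})\,p_x^2 p_{x+1}^2\,ds\right)^m.
$$
A second application of Jensen, now in time with the normalized measure $ds/t$ on $[0,t]$, combined with $a\le C$, bounds each factor $\bigl(\int_0^t a\,p_x^2 p_{x+1}^2\,ds\bigr)^m$ by $C^m t^{m-1}\int_0^t p_x^{2m}(s)p_{x+1}^{2m}(s)\,ds$. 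Taking expectations, using the stationarity of the process under $\nu_y^N$ and the product structure of $\nu_y^N$, the expectation of that integral is $t\,\mathbb E_{\nu_y}[p_0^{2m}]^2$, an explicit constant times a power of $y$ read off from the Gaussian moment $\mathbb E_{\nu_y}[p_0^{2m}]=(2m-1)!!\,y^{2m}$. Collecting constants and using $\sum_x\theta_x=1$ gives the claimed bound, with a constant depending only on $m$ (and on the fixed upper bound $C$ for $a$).

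There is no genuine obstacle here; the only point requiring a little care is to keep the normalization straight, so that the estimate comes out with the $m$-th power of the \emph{average} $\frac1N\sum_x|\nabla_N H(x/N)|^2$ rather than of the unnormalized sum --- precisely what the weights $\theta_x$ and the probabilistic form of Jensen's inequality guarantee. One also notes that a version with $\sup_{0\le t\le T}$ inside the expectation follows immediately by inserting Doob's $L^{2m}$ maximal inequality before BDG, at the cost of replacing $t^m$ by $T^m$; this is the form that will actually be used in the equicontinuity estimate for the martingale part of $Y_t^N$.
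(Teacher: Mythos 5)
Your argument is correct, but it reaches the estimate by a genuinely different route from the paper. You invoke the Burkholder--Davis--Gundy inequality as a black box to reduce everything to $\mathbb E_{\nu_y^N}[\langle M^N(H)\rangle(t)^m]$, and then control that moment by Jensen's inequality twice (in space with the normalized weights $\theta_x$, in time with $ds/t$), stationarity, and the explicit Gaussian moments. The paper instead essentially re-derives the relevant case of BDG from scratch: it applies It\^o's formula to $(M_t)^{2m}$, bounds $\mathbb E_{\nu_y^N}[(Q_t)^m]$ for the density $Q_t$ of the quadratic variation (by the same kind of space Jensen/H\"older step you use), and then closes a differential inequality for $f(t)=\bigl(\int_0^t\mathbb E[(M_s)^{2m}]\,ds\bigr)^{1/m}$ via H\"older in space and time. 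Your version is shorter and more modular; the paper's is self-contained and makes the origin of the factor $t^m$ explicit through the integration of $f'(t)\le C\,t^{1/m}$. Both require the same substantive inputs: $a\le C$, stationarity, product structure of $\nu_y^N$, and the correct normalization so that the average $\frac1N\sum_x|\nabla_N H(x/N)|^2$ rather than the unnormalized sum appears. Your closing remark about the $\sup$ version is fine, though Doob is not even needed there since BDG already controls $\sup_{s\le t}|M_s|$.

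One small point worth flagging, which is not a defect of your argument but a discrepancy with the statement as printed: your computation (like the paper's own, if one tracks constants honestly) produces $\mathbb E_{\nu_y}[p_0^{2m}]^2=((2m-1)!!)^2\,y^{4m}$, hence a bound of the form $C_m\,y^{4m}t^m\{a_N\}^m$ rather than $y^{2m}$ with $C$ depending only on $m$. Since $y$ is fixed throughout the paper this is immaterial for every subsequent use of the lemma, but strictly speaking the power of $y$ in the statement appears to be a typo, and your phrase ``gives the claimed bound'' should be read modulo that constant.
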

\begin{proof}
Denote the continuous martingale $M_t^N(H)$ by $M_t$, and let $C_m$ be a constant depending only on $m$ which can change from line to line.

Using the explicit expression for the martingale and applying It\^o's formula we have
$$
d(M_t)^{2m}= 2m(M_t)^{2m-1}dM_t + m(2m-1)(M_t)^{2m-2}Q_tdt,
$$
where, 
$$
Q_t=\frac{1}{N}\sum_{x\in\mathbb{T}_N}|\nabla_N H(\frac{x}{N})|^2a(p_x,p_{x+1})p^2_xp^2_{x+1}\;.
$$
Explicit calculations lead us to
$$
\mathbb E_{\nu^N_y}[(Q_t)^m]\leq C_m y^{2m}\big\{\frac{1}{N}\sum_{x\in\mathbb{T}_N}|\nabla_N H(\frac{x}{N})|^2\big\}^m\;,
$$
thus, by stationarity and applying H\"older's inequality for space and time we obtain
\begin{equation}
\label{holder}
\mathbb E_{\nu^N_y}[(M_t)^{2m}] \leq C_my^2t^{\frac{1}{m}}
\frac{1}{N}\sum_{x\in\mathbb{T}_N}|\nabla_N H(\frac{x}{N})|^2
\left(\int_0^t\mathbb E_{\nu^N_y}[(M_s)^{2m}]ds\right)^{\frac{2m-2}{2m}}
\;.
\end{equation}
In terms of the function 
$
f(t)=\left(\int_0^t\mathbb E_{\nu^N_y}[(M_s)^{2m}]ds\right)^{\frac{1}{m}}
$,
inequality (\ref{holder}) reads 
$$
f'(t) \leq C_my^2t^{\frac{1}{m}}
\frac{1}{N}\sum_{x\in\mathbb{T}_N}|\nabla_N H(\frac{x}{N})|^2 \;,
$$
and integrating we obtain
$$
f(t) \leq C_my^2t^{1+\frac{1}{m}}
\frac{1}{N}\sum_{x\in\mathbb{T}_N}|\nabla_N H(\frac{x}{N})|^2 \;.
$$
The proof ends by using the last line to estimate the right hand side of (\ref{holder}).
\end{proof} 
In order to pass from this global estimate to a local estimate, we will use the Garcia's inequality.

\begin{lemma}
\textit{(Garcia-Rodemich-Rumsey)} (\textit{cf} \cite{SV} p.47)
\label{GRR}
Given $p:[0, \infty]\to \mathbb{R}$  continuous, strictly increasing functions such that $p(0)=\psi(0)=0$ and $\lim_{t\to \infty}\psi (t)=\infty.$  Given $\phi\in C([0,T]; \mathbb{R}^d)$ if
\[
\int_0^T\int_0^T\psi \left(\frac{|\phi(t)-\phi(s)|}{p(|t-s|)}\right)dsdt \leq B < \infty
\]
then, for $0\leq s<t\leq T$
\[
|\phi(t)-\phi(s)|\leq 8 \int_0^{t-s} \psi ^{-1}\left(\frac{4B}{u^2}\right)p(du).
\]
\end{lemma}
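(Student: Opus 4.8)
This is the classical Garcia--Rodemich--Rumsey inequality; as indicated, a complete proof can be found in \cite{SV}, and the plan is to recall the standard chaining argument. Fix $0\le s<t\le T$ and write $d=t-s$. First I would introduce, for $u\in[s,t]$,
\[
I(u)=\int_s^t \psi\!\left(\frac{|\phi(u)-\phi(v)|}{p(|u-v|)}\right)dv ,
\]
so that restricting the hypothesis to $[s,t]\times[s,t]$ gives $\int_s^t I(u)\,du\le B$. All the information about $\phi$ to be used is encoded in this single inequality together with the continuity of $\phi$.

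The heart of the proof is to build iteratively a strictly decreasing sequence $t_0=t>t_1>t_2>\cdots$ with $t_n\downarrow s$. Set $d_n=t_n-s$. Given $t_n$, I would choose $t_{n+1}$ in $(s,s+d_n/2)$ lying outside the two exceptional sets $\{u:I(u)>\tfrac{4}{d_n}\int_s^{t_n}I(v)\,dv\}$ and $\{u:\psi(|\phi(u)-\phi(t_n)|/p(|u-t_n|))>\tfrac{4}{d_n}I(t_n)\}$; each has Lebesgue measure $<d_n/4$ by Chebyshev's inequality (using $\int_s^{t_n}I\le B$ and $\int_s^t\psi(|\phi(u)-\phi(t_n)|/p(|u-t_n|))\,du=I(t_n)$), so together with $[s+d_n/2,t_n]$ they do not cover $(s,t_n)$ and such a $t_{n+1}$ exists. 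This forces $d_{n+1}<d_n/2$, hence $d_n\le 2^{-n}d$ and $t_n\downarrow s$; moreover $I(t_{n+1})\le 4B/d_n$ and, for $n\ge1$ (the step $n=0$ is addressed below),
\[
\psi\!\left(\frac{|\phi(t_{n+1})-\phi(t_n)|}{p(d_n-d_{n+1})}\right)\le\frac{4}{d_n}I(t_n)\le\frac{16B}{d_n d_{n-1}}\le\frac{CB}{d_n^{2}},
\]
using $d_{n-1}>d_n$. Applying $\psi^{-1}$ and $p$, which are increasing, yields $|\phi(t_n)-\phi(t_{n+1})|\le p(d_n-d_{n+1})\,\psi^{-1}(CB/d_n^{2})$.

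Finally, continuity of $\phi$ and $t_n\downarrow s$ give $|\phi(t)-\phi(s)|\le\sum_{n\ge0}|\phi(t_n)-\phi(t_{n+1})|$, so it remains to bound $\sum_{n\ge0}p(d_n-d_{n+1})\,\psi^{-1}(CB/d_n^{2})$ by $8\int_0^{d}\psi^{-1}(4B/u^{2})\,p(du)$. Since the intervals $(d_{n+1},d_n]$ partition $(0,d]$, on each of them $\psi^{-1}(4B/u^{2})\ge\psi^{-1}(4B/d_n^{2})$ while $d_n-d_{n+1}$ is comparable to the length $d_n$ of $[0,d_n]$, and a term-by-term comparison of the series with the Stieltjes integral closes the estimate; it is at this step that one must keep track of the universal constants to recover precisely the factors $8$ and $4$.

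The only genuinely delicate points are two. First, the initial point $t_0=t$: since $\int_s^t I\le B$ controls only the average of $I$, the value $I(t)$ is not bounded a priori, which is what would be needed to start the recursion at $n=0$ exactly as above; the standard remedy is to prove the inequality first for pairs $(s,t)$ ranging over a dense set---where the ``initial'' point may also be selected to avoid an exceptional set---and then to pass to the limit using the uniform continuity of $\phi$ on $[0,T]$. Second, the bookkeeping of constants in the final series-to-integral comparison: producing $8$ and $4$ rather than unspecified $O(1)$ constants requires choosing the thresholds (the factor $\tfrac{4}{d_n}$ above, the factor $\tfrac12$ in $s+d_n/2$, and so on) with some care. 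Everything else is a routine application of Chebyshev's inequality and of the monotonicity of $p$ and $\psi$.
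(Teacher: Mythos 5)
The paper does not actually prove this lemma: it is quoted verbatim from Stroock--Varadhan with the citation ``p.~47'', so your task is really to reconstruct the classical GRR argument, and your sketch does follow its general strategy (Chebyshev selection of good points, chaining, telescoping against the Stieltjes integral). Two of your structural choices, however, would not survive being made precise, and they are not mere ``bookkeeping of constants''.

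First, you define the scales by halving the \emph{time} variable, requiring $t_{n+1}\in(s,s+d_n/2)$, so that $d_{n+1}<d_n/2$. The final comparison then needs $p(d_n-d_{n+1})$ to be controlled by $\int_{d_{n+1}}^{d_n}\psi^{-1}(4B/u^2)\,dp(u)\ge \psi^{-1}(4B/d_n^2)\,(p(d_n)-p(d_{n+1}))$, i.e.\ essentially $p(d_n-d_{n+1})\lesssim p(d_n)-p(d_{n+1})$. Geometric decay of $d_n$ gives no such control for a general strictly increasing $p$: if $p$ is nearly flat on $[d_{n+1},d_n]$, the increment $p(d_n)-p(d_{n+1})$ is negligible while $p(d_n-d_{n+1})$ is of order $p(d_n)$, and one can arrange (take $\psi$ growing very fast, $p$ with long flat stretches) that the sum of the chain terms exceeds any constant multiple of the integral. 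The correct normalization --- and the actual key idea of the Stroock--Varadhan proof --- is to halve the \emph{$p$-value}: one defines $d_{n+1}$ by $p(d_{n+1})=\tfrac12 p(t_n)$, which yields $p(t_n-t_{n+1})\le p(t_n)=2p(d_{n+1})\le 4\,(p(d_{n+1})-p(d_{n+2}))$ and makes the telescoping exact with the stated constants. Second, you correctly flag that $I(t)$ at the endpoint is uncontrolled, but the dense-set remedy you propose is circular (the exceptional set of admissible ``initial'' points depends on the pair $(s,t)$ you are trying to approximate) and is not how the difficulty is resolved. The standard fix is to anchor the chain at an \emph{interior} point $t_0$ chosen by Chebyshev so that $I(t_0)\le B/(t-s)$, and to chain from $t_0$ down to $s$ and, symmetrically, from $t_0$ up to $t$; each half contributes $4\int_0^{t-s}\psi^{-1}(4B/u^2)\,p(du)$, and this is precisely where the factor $8=4+4$ in the statement comes from. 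With these two corrections your outline becomes the proof in \cite{SV}.
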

\begin{lemma}
\label{martfluctvaria2}
For every function $H \in C^2(\mathbb T)$, 
\begin{equation*}
 \lim_{\delta \to 0}\limsup_{N \to \infty}\mathbb E_{\nu_y}\left[
\sup_{\substack{|t-s|<\delta\\0\leq s <t\leq T}}
\left( M_t^N(H)-M_s^N(H)\right)^2
\right] = 0\;.
\end{equation*}
\end{lemma}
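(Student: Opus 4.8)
The plan is to derive this local equicontinuity estimate for the martingale part from the global moment bound of Lemma \ref{martfluctvaria} together with the Garcia--Rodemich--Rumsey inequality (Lemma \ref{GRR}). First I would upgrade Lemma \ref{martfluctvaria} to a bound on increments: since the process $\{p(t)\}_{t\ge 0}$ is stationary under $\nu^N_y$, for fixed $s$ the map $u\mapsto M^N_{s+u}(H)-M^N_s(H)$ is again a continuous martingale vanishing at $u=0$ whose quadratic variation at time $u$, namely $\langle M^N(H)\rangle_{s+u}-\langle M^N(H)\rangle_s$, has the same law as $\langle M^N(H)\rangle_u$. Hence the computation proving Lemma \ref{martfluctvaria} applies verbatim and yields, for every $m\in\mathbb N$ and all $0\le s\le t\le T$,
\[
\mathbb E_{\nu^N_y}\big[\,|M^N_t(H)-M^N_s(H)|^{2m}\,\big]\;\le\; C\,y^{2m}\,(t-s)^m\Big\{\tfrac1N\sum_{x\in\mathbb T_N}|\nabla_N H(x/N)|^2\Big\}^m\;\le\; C_{H,m}\,(t-s)^m\,,
\]
where the final bound uses the smoothness of $H$ and the constant $C_{H,m}$ is independent of $N$.

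Next I would apply Lemma \ref{GRR} pathwise to the continuous path $t\mapsto M^N_t(H)$ with $\psi(u)=u^{2m}$ and $p(u)=u^{\alpha}$ for a suitable exponent $\alpha\in\big(1/m,\;1/2+1/(2m)\big)$; for concreteness one may fix $m=3$ and $\alpha=1/2$. Writing $B_N=\int_0^T\!\int_0^T |M^N_t(H)-M^N_s(H)|^{2m}\,|t-s|^{-2m\alpha}\,ds\,dt$, the increment bound above and Tonelli's theorem give $\mathbb E_{\nu^N_y}[B_N]\le C_{H,m}\int_0^T\!\int_0^T |t-s|^{m-2m\alpha}\,ds\,dt<\infty$ since $m-2m\alpha>-1$; in particular $B_N<\infty$ almost surely. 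Lemma \ref{GRR} then produces a pathwise bound of the form
\[
|M^N_t(H)-M^N_s(H)|\;\le\; 8\int_0^{t-s}\Big(\tfrac{4B_N}{u^2}\Big)^{1/(2m)}\,\alpha u^{\alpha-1}\,du \;\le\; C\,B_N^{1/(2m)}\,(t-s)^{\alpha-1/m}\,,
\]
the last integral converging at $0$ precisely because $\alpha>1/m$.

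Finally, squaring, taking the supremum over $|t-s|<\delta$, and applying Jensen's inequality to the concave map $x\mapsto x^{1/m}$ (valid since $m\ge 2$) I would get
\[
\mathbb E_{\nu^N_y}\Big[\sup_{\substack{|t-s|<\delta\\0\le s<t\le T}}(M^N_t(H)-M^N_s(H))^2\Big]\;\le\; C\,\delta^{2(\alpha-1/m)}\big(\mathbb E_{\nu^N_y}[B_N]\big)^{1/m}\;\le\; C'_{H,m}\,\delta^{2(\alpha-1/m)}\,,
\]
with $C'_{H,m}$ independent of $N$; letting $N\to\infty$ and then $\delta\to 0$ concludes, since $\alpha-1/m>0$. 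The only delicate point I anticipate is the exponent bookkeeping: the two constraints $\alpha<1/2+1/(2m)$ (so that $B_N$ has finite mean) and $\alpha>1/m$ (so that the Garcia integral converges at the origin) are compatible only for $m\ge 2$, so $m$ must be taken large enough, and one must keep track that every constant appearing is uniform in $N$ — which is exactly what stationarity and the uniform bound on $\tfrac1N\sum_x|\nabla_N H(x/N)|^2$ provide.
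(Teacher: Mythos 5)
Your proof is correct and follows essentially the same route as the paper: the increment moment bound obtained from Lemma \ref{martfluctvaria} via stationarity, the Garcia--Rodemich--Rumsey inequality with $\psi(u)=u^{2m}$ and $p(u)=u^{\alpha}$ (the paper takes exactly $m=3$, $\alpha=1/2$, yielding the same $\delta^{1/3}$ rate), and Jensen's inequality to pass from the pathwise bound to the expectation. Your exponent bookkeeping and the explicit verification that $\mathbb E[B_N]<\infty$ are in fact spelled out more carefully than in the paper's own proof.
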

\begin{proof}
Taking $p(u)=\sqrt{u}$ and $\psi(u)= u^6$ in Lemma \ref{GRR} we get  
\begin{equation*}
|\phi(t)-\phi(s)| \leq  C B^{1/6}|t-s|^{1/6}\;,
\end{equation*}
where,
\begin{equation}
\label{B}
B = \int_0^T\int_0^T \frac{|\phi(t)-\phi(s)|^6}{|t-s|^3}dsdt\;.
\end{equation}
Taking $\phi(t)= M_t^N(H)$ we obtain
\begin{equation*}
\mathbb E_{\nu^N_y}\big[
  \sup_{\substack{|t-s|<\delta\\0\leq s <t\leq T}}|\widehat M^H_N(t)-\widehat M^H_N(s)|^2
\big] \leq
Cy^2\delta^{1/3} T^{2/3}\frac{1}{N}\sum_{x \in \mathbb T_N} (\nabla H(x/N))^2\;,
\end{equation*}
which implies the desired result.

Observe that the integral in (\ref{B}) is finite, which permits to apply Lemma \ref{GRR}. In fact, as a consequence of Lemma \ref{martfluctvaria} and Kolmogorov- \v{C}entsov theorem, we have $\alpha$-H\"older continuity of paths for $\alpha \in [0,\frac{1}{2})$.
\end{proof} 
The proof of Theorem \ref{Compactness} will be concluded by proving the following lemma. 
\begin{lemma}For every function $H \in C^2(\mathbb T)$, 
\begin{equation*}
 \lim_{\delta \to 0}\limsup_{N \to \infty}\mathbb E_{\nu_y}\left[
\sup_{\substack{|t-s|<\delta\\0\leq s <t\leq T}}
\left(Z_t^N(H)-Z_s^N(H)\right)^2
\right] = 0
\end{equation*}
\end{lemma}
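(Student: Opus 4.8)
The plan is to use the Boltzmann--Gibbs decomposition of the current to split $Z^N_t(H)$ into a diffusive term, the non-gradient remainder controlled in Section~\ref{Boltzmann-Gibbs}, and a dissipative term, and to establish equicontinuity of each piece separately. Fix the minimizing sequence $\{F_k\}_{k\geq1}$ and recall $V_x=W_{x,x+1}-\hat a(y)(p^2_{x+1}-p^2_x)$. Writing $W_{x,x+1}=\hat a(y)(p^2_{x+1}-p^2_x)+[V_x-\mathcal L_N\tau^xF_k]+\mathcal L_N\tau^xF_k$ and summing by parts, using that $\nabla_NH(x/N)-\nabla_NH((x-1)/N)=\tfrac1N\Delta_NH(x/N)$ and $\sum_{x}\Delta_NH(x/N)=0$, one obtains
\[
Z^N_t(H)\;=\;-\hat a(y)\int_0^t Y^N_s(\Delta_NH)\,ds\;+\;\mathcal A^{N,k}_t\;+\;I^{2}_{N,F_k}(H^{\cdot t}),
\]
where $\mathcal A^{N,k}_t=\int_0^t\sqrt N\sum_{x}\nabla_NH(x/N)[V_x-\mathcal L_N\tau^xF_k](p(s))\,ds$ and $I^{2}_{N,F_k}(H^{\cdot t})=\int_0^t\sqrt N\sum_{x}\nabla_NH(x/N)\,\mathcal L_N\tau^xF_k(p(s))\,ds$ is the functional analysed in the proof of Lemma~\ref{twoterms1}. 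Since $(Z^N_t-Z^N_s)^2\le 3$ times the sum of the squares of the three increments, it suffices to bound the modulus of continuity of each term.

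For the diffusive term, for $|t-s|<\delta$ one has $\big|\int_s^t Y^N_u(\Delta_NH)\,du\big|\le \delta\sup_{0\le u\le T}|Y^N_u(\Delta_NH)|$, so that $\mathbb E_{\nu_y}\big[\sup_{|t-s|<\delta}(\int_s^tY^N_u(\Delta_NH)du)^2\big]\le \delta^2\,\mathbb E_{\nu_y}\big[\sup_{0\le u\le T}Y^N_u(\Delta_NH)^2\big]$, which is $O(\delta^2)$ uniformly in $N$ by Lemma~\ref{keyestimate} (applied to $\Delta_NH$, up to the $O(N^{-1/2})$ correction relating $Y^N(\Delta_NH)$ and $Y^N(\Delta H)$). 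For the non-gradient remainder, $\mathbb E_{\nu_y}[\sup_{|t-s|<\delta}(\mathcal A^{N,k}_t-\mathcal A^{N,k}_s)^2]\le 4\,\mathbb E_{\nu_y}[\sup_{0\le t\le T}(\mathcal A^{N,k}_t)^2]$, which is precisely the quantity appearing in the strengthened Boltzmann--Gibbs estimate established at the beginning of Section~\ref{Boltzmann-Gibbs}; this bound is independent of $\delta$ and tends to $0$ as $N\to\infty$ and then $k\to\infty$.

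For the dissipative term, applying It\^o's formula exactly as in the proof of Lemma~\ref{twoterms1} (now with $H$ time-independent, so the $\partial_t$-term drops) gives $I^{2}_{N,F_k}(H^{\cdot t})=2[\zeta_{N,F_k}(t)-\zeta_{N,F_k}(0)]-2\widehat{\mathcal M}^{N,k}_t$, where $\zeta_{N,F}(t)=N^{-3/2}\sum_x\nabla_NH(x/N)\tau^xF(p(t))$ and $\widehat{\mathcal M}^{N,k}$ is the martingale $\int_0^t N^{-1/2}\sum_x\nabla_NH(x/N)\sum_z\sqrt{a(p_z,p_{z+1})}X_{z,z+1}(\tau^xF_k)\,dB_{z,z+1}(s)$. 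Since $F_k$ is bounded, $|\zeta_{N,F_k}(t)|\le C_{F_k,H}N^{-1/2}$ deterministically, so the $\zeta$-contribution to the modulus of continuity is $O(N^{-1})$. For $\widehat{\mathcal M}^{N,k}$, the quadratic variation density is (because $F_k$ is local) a polynomial in finitely many $p_z$'s whose moments under $\nu_y$ are finite and independent of $N$; hence, exactly as in Lemma~\ref{martfluctvaria}, It\^o's formula and H\"older yield $\mathbb E_{\nu_y}[(\widehat{\mathcal M}^{N,k}_t-\widehat{\mathcal M}^{N,k}_s)^{2m}]\le C_{m,k}|t-s|^m$ uniformly in $N$, and then Garsia--Rodemich--Rumsey (Lemma~\ref{GRR}) with $p(u)=\sqrt u$, $\psi(u)=u^6$, as in the proof of Lemma~\ref{martfluctvaria2}, gives $\mathbb E_{\nu_y}[\sup_{|t-s|<\delta}(\widehat{\mathcal M}^{N,k}_t-\widehat{\mathcal M}^{N,k}_s)^2]\le C_k\,\delta^{1/3}$ uniformly in $N$.

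Putting the three estimates together, for every fixed $k$,
\[
\limsup_{N\to\infty}\mathbb E_{\nu_y}\Big[\sup_{|t-s|<\delta}(Z^N_t(H)-Z^N_s(H))^2\Big]\;\le\;C\delta^2+C_k\delta^{1/3}+4\,\limsup_{N\to\infty}\mathbb E_{\nu_y}\big[\sup_{0\le t\le T}(\mathcal A^{N,k}_t)^2\big],
\]
and letting first $\delta\to0$ and then $k\to\infty$ the right-hand side vanishes by the strengthened Boltzmann--Gibbs estimate, which proves the lemma. The substantive input here is already provided by Section~\ref{Boltzmann-Gibbs} (the $\mathscr H_{-1}$-smallness of $V_x-\mathcal L_N\tau^xF_k$); the only genuinely new point is that the auxiliary martingale $\widehat{\mathcal M}^{N,k}$ has quadratic variation of order $t$ rather than $o(1)$, so its increments are \emph{not} controlled by Doob's inequality alone and require the Garsia-type continuity estimate, and one must be careful to take the limits in the order $N\to\infty$, $\delta\to0$, $k\to\infty$.
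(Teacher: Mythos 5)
Your argument follows the paper's proof essentially step for step: the same add-and-subtract of $\hat a(y)(p^2_{x+1}-p^2_x)+\mathcal L_N\tau^xF_k$ inside the current, the strengthened (sup-in-time) Boltzmann--Gibbs estimate from Section \ref{Boltzmann-Gibbs} for the non-gradient remainder, and the It\^o decomposition plus Garsia--Rodemich--Rumsey treatment of the auxiliary martingale for the dissipative term, with the limits taken in the same order. The one weak spot is the diffusive term: bounding $\sup_{|t-s|<\delta}\bigl(\int_s^tY^N_u(\Delta_NH)\,du\bigr)^2$ by $\delta^2\sup_{u}Y^N_u(\Delta_NH)^2$ and then invoking Lemma \ref{keyestimate} with test function $\Delta_NH$ requires control of $\nabla_N\Delta_NH$, i.e.\ third-order differences of $H$, which is not available for $H\in C^2(\mathbb T)$ as stated; the paper instead applies Cauchy--Schwarz in time together with stationarity, obtaining the (sufficient) bound $\hat a(y)^2\delta T\,\mathbb E_{\nu_y}\bigl[\bigl(N^{-1/2}\sum_x\Delta_NH(x/N)(p_x^2-y^2)\bigr)^2\bigr]=O(\delta)$, which needs only $N^{-1}\sum_x(\Delta_NH(x/N))^2$ bounded. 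Substituting that estimate for your crude sup bound closes the point; everything else is sound.
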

\begin{proof}
Recall that the expectation appearing above is by definition
\begin{equation}
\label{Z_t^Nterm}
\mathbb E_{\nu_y}\left[
\sup_{\substack{|t-s|<\delta\\0\leq s <t\leq T}}
\left(\int^t_s \sqrt{N} \sum_{x \in \mathbb T_N}\nabla_N H(x/N) W_{x,x+1}(s)ds \right)^2
\right]\;.
\end{equation}
Now we take advantage of the decomposition obtained for the current in the preceding sections, which allows to study separately the diffusive part of the current and the part coming from a fluctuation term. For this we add and subtract $\hat{a}(y)[p^2_{x+1}-p^2_{x}]+\mathcal L_N(\tau^xF_k(p))\;$ from $\;W_{x,x+1}$, obtaining that (\ref{Z_t^Nterm}) is bounded above by 3 times the following sum 
{\small
\begin{equation*}
4\mathbb E_{\nu_y}\!\!\left[\!
\sup_{0\leq t \leq T} \!\!
\left(\int^t_0 \sqrt{N} \sum_{x \in \mathbb T_N}\nabla_N H(x/N)
[ W_{x,x+1}(s)-\hat{a}(y)[p^2_{x+1}-p^2_{x}]-\mathcal L_N(\tau^xF_k(p))]ds \! \right)^2
\right]
\end{equation*}
\begin{equation*}
+ \; \hat{a}(y)^2\mathbb E_{\nu_y}\left[
\sup_{\substack{|t-s|<\delta\\0\leq s <t\leq T}}
\left(\int^t_s \sqrt{N} \sum_{x \in \mathbb T_N}\nabla_N H(x/N)[p^2_{x+1}-p^2_{x}]ds \right)^2
\right]
\end{equation*}
\begin{equation*}
+\; \mathbb E_{\nu_y}\left[
\sup_{\substack{|t-s|<\delta\\0\leq s <t\leq T}}
\left(\int^t_s \sqrt{N} \sum_{x \in \mathbb T_N}\nabla_N H(x/N) \mathcal L_N(\tau^xF_k(p))ds \right)^2
\right]\;.
\end{equation*}
}
The first term tends to zero as k tends to infinity after N. In fact, this is the content of the Boltzmann-Gibbs Principle proved in Section \ref{Boltzmann-Gibbs}.

Performing a sum by parts and using Schwartz inequality together with the stationarity, we can see that the second term is bounded above by
\begin{equation*}
\hat{a}(y)^2\delta T E_{\nu_y}\left[
\left(\frac{1}{\sqrt{N}} \sum_{x \in \mathbb T_N}\Delta_N H(x/N)p^2_{x}\right)^2 
\right]\;.
\end{equation*}
We can replace in the last line $p_x^2$ by $[p_x^2-y^2]$ (because of periodicity), obtaining that this expression is bounded above by 
\begin{equation*}
3y^4\hat{a}(y)^2\delta T 
\frac{1}{{N}} \sum_{x \in \mathbb T_N}(\Delta_N H(x/N))^2
\;.
\end{equation*}
For the third term we add and subtract $M^{2}_{N,F_k}(H^{\cdot t})-M^{2}_{N,F_k}(H^{\cdot s})$ to the integral, where $M^{2}_{N,F_k}(H^{\cdot t})$ is the martingale defined after equation (\ref{redependingtimemart}). In that way we obtain that this term is bounded above by 2 times the sum of the following two terms,
\begin{equation*}
\mathbb E_{\nu_y}\left[
\sup_{\substack{|t-s|<\delta\\0\leq s <t\leq T}}
\left(M^{2}_{N,F}(H^{\cdot t})-M^{2}_{N,F}(H^{\cdot s}) \right)^2
\right]\;,
\end{equation*}
\begin{equation*}
4\mathbb E_{\nu_y}\left[
\sup_{0\leq t\leq T}
\left(M^{2}_{N,F_k}(H^{\cdot t})+\int^t_0 \sqrt{N} \sum_{x \in \mathbb T_N}\nabla_N H(x/N) \mathcal L_N(\tau^xF_k(p))ds \right)^2
\right]\;.
\end{equation*}
Since the functions $F_k$ are local and belong to the Schwartz space, we can handle the first term in the same way as we did with $M_t^N(H)$ in Lemma \ref{martfluctvaria} and Lemma \ref{martfluctvaria2} . The second term tends to zero as $N$ goes to infinity, as stated in Lemma \ref{twoterms1}.
\end{proof}

\section{The Space $\mathcal{H}_y$}
\label{{H}_y}
The aim of this section is to define the space $\mathcal{H}_y$ and prove the characterization that was used in the proof of Theorem \ref{covarA_N}. Let us begin by introducing some notation.

Let $\Omega=\mathbb{R}^\mathbb{Z}$ and $p=(\cdots,p_{-1},p_0,p_1,\cdots)$ a typical element of this set. Define for $i\in \mathbb{Z}$ the shift operator $\tau^{i}:\Omega\to\Omega$  by $\tau^{i}(p)_j=p_{j+i}$ , and $\tau^{i}f(p)=f(\tau^{i}p)$ for any function  $f:\Omega\to \mathbb{R}$. We will consider the product measure $\nu_y$ on $\Omega$ given by $d\nu_y=\prod_{-\infty}^{\infty}\frac{\exp(\frac{-p_x^2}{2y^2})}{\sqrt{2\pi} y}dp$.

Let us define $\mathcal{A}=\cup_{k\geq 1} \mathcal{A}_k$, where $\mathcal{A}_k$ is the space of smooth functions $F$ depending on $2k+1$ variables. Given $F\in\mathcal{A}_k$ we can consider the formal sum
\begin{equation}
\label{formalsum}
\widetilde{F}(p)=\sum_{j=-\infty}^{\infty}\tau^jF(p)\;,
\end{equation}
and for $i\in\mathbb{Z}$ the well defined
\[
\frac{\partial\widetilde{F}}{\partial p_i}(p)=\sum_{i-k\leq j\leq i+k}\frac{\partial}{\partial p_i}F(p_{j-k},\cdots,p_{j+k})\;.
\]
The formal invariance $\widetilde{F}(\tau (p))=\widetilde{F}(p)$ lead us to the precise covariance
\begin{equation}\label{covariance}
\frac{\partial\widetilde{F}}{\partial p_i}(p)=\frac{\partial\widetilde{F}}{\partial p_0}(\tau^i p)\;.
\end{equation}
Recall that $X_{i,j}=p_j\partial_{p_i}-p_i\partial_{p_j}$. Given $F\in\mathcal{A}$ and $i\in\mathbb{Z}$, $X_{i,i+1}(\widetilde{F})$ is well defined and satisfies
\[
X_{i,i+1}(\widetilde{F})(p)=\tau^iX_{0,1}(\widetilde{F})(p)\;.
\]
Finally we define the following set
\[
\mathcal{B}_y=\{X_{0,1}(\widetilde{F})\in L^2(\nu_y): F\in\mathcal{A}\}.
\]
In terms of the notation introduced above, the variational formula obtained in (\ref{^a1}) for the diffusion coefficient can be written as 
\begin{equation}
\label{coeffdifusion}
\hat{a}(y)=y^{-4}\inf_{\xi\in\mathcal{B}_y} \mathbb{E}_{\nu_y}[a(p_0,p_1)(p_0p_1+\xi)^2]\;.
\end{equation}
As is well known, if we denote by $\mathcal{H}_y$ the closure of $\mathcal{B}_y$ in $L^2(\nu_y)$, then
\[
\hat{a}(y)=y^{-4}\inf_{\xi\in\mathcal{H}_y} \mathbb{E}_{\nu_y}[a(p_0,p_1)(p_0p_1+\xi)^2]\;,
\]
and the infimum will be attained at a unique $\xi\in\mathcal{H}_y$.

At the end of the proof of Theorem \ref{covarA_N} we used an intrinsic characterization of the space $\mathcal{H}_y$. In order to obtain such a characterization, we can first observe that defining  $\xi=X_{0,1}(\widetilde{F})$ for $F\in \mathcal{A}$, the following properties are satisfied:
\begin{enumerate}
\item[\textit{i})]$\mathbb{E}_{\nu_y}[\xi]=0$,
\item[\textit{ii})]$\mathbb{E}_{\nu_y}[p_0p_1\xi]=0$,
\item[\textit{iii})] $X_{i,i+1}(\tau^j\xi)=X_{j,j+1}(\tau^i\xi)$ \ \ \ \ if\ \ \ \  $\{i,i+1\}\cap\{j,j+1\}=\emptyset$, 
\item [\textit{iv})] $p_{i+1}[X_{i+1,i+2}(\tau^i\xi)-X_{i,i+1}(\tau^{i+1}\xi)]=p_{i+2}\tau^i\xi-p_{i}\tau^{i+1}\xi$\ \ for $i\in \mathbb{Z}$.
\end{enumerate}
Now we can claim the desired characterization.
\begin{theorem}
\label{characterizationofH_y}
If $\xi\in L^2(\nu_y)$ satisfies conditions i) to iv) \textnormal{(the last two in a weak sense)} then $\xi\in\mathcal{H}_y$.
\end{theorem}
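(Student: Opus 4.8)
Since $\mathcal{H}_y$ is closed in $L^2(\nu_y)$ and the converse inclusion is the elementary computation recalled just before the statement, the task is to show that an $L^2(\nu_y)$ function $\xi$ obeying i)--iv) can be approximated in $L^2(\nu_y)$ by functions $X_{0,1}(\widetilde{F_n})$ with $F_n\in\mathcal{A}$. The plan is a finite-volume argument, following the scheme of Varadhan \cite{V}, whose analytic heart is the fact that a closed $1$-form on a sphere of dimension $\ge 2$ is exact. Indeed, once the conserved quantity $\sum p_i^2$ has been fixed, the configurations of a block $\{-n,\dots,n\}$ live on the microcanonical sphere $S_n=\{p\in\mathbb{R}^{2n+1}:\sum_{|i|\le n}p_i^2=(2n+1)y^2\}$, the rotation vector fields $X_{i,i+1}$, $-n\le i\le n-1$, are tangent to $S_n$ and, being $2n$ in number, span its tangent space at generic points, and $[X_{i+1,i+2},X_{i,i+1}]=X_{i,i+2}$. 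Conditions iii) and iv) encode exactly the discrete closedness (plaquette/Frobenius) identities compatible with these brackets --- iii) for pairs of disjoint edges, iv) for pairs of adjacent ones --- so that they are what one needs in order to solve, on each $S_n$, a first-order system of the form $X_{i,i+1}\phi=\tau^i\xi$.

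Concretely, for each $n$ I would first regularize, replacing $\xi$ by $\xi^{(n)}=\mathbb{E}_{\nu_y}[\xi\mid\mathfrak F_{-n}^{\,n}]$ (and, if needed, mollifying along $S_n$), check that the weak relations iii)--iv) pass to the family $\{\tau^i\xi^{(n)}\}_{-n\le i\le n-1}$ up to error terms supported within a bounded distance of the endpoints $\{-n,n\}$, and then invoke triviality of the first de Rham cohomology of $S_n$ (it is simply connected for $2n\ge 2$; see \cite{Ba}) to produce a smooth potential $\phi_n$ on $S_n$, normalized by $\int\phi_n\,d\mu_{n,y}=0$, with $X_{i,i+1}\phi_n=\tau^i\xi^{(n)}$ modulo those boundary errors. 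One then extends $\phi_n$ to a local function $F_n\in\mathcal{A}_n$ on $\mathbb{R}^{2n+1}$ (for instance so that $F_n$ is constant along rays, i.e.\ homogeneous of degree $0$) and compares the microcanonical picture with the canonical one through the equivalence of ensembles of Appendix~\ref{equiensem}, so as to obtain $\|X_{0,1}(\widetilde{F_n})-\xi\|_{L^2(\nu_y)}\to 0$. Conditions i) and ii) enter precisely here: mean-zero because $\mathcal{H}_y$ consists of mean-zero functions, and orthogonality to $p_0p_1$ --- equivalently to the energy current $p_1^2-p_0^2=X_{0,1}(p_0p_1)$ --- because otherwise $\xi$ would have a component transverse to $\mathcal{H}_y$ along which no potential could exist.

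It remains to show that the accumulated errors vanish as $n\to\infty$. There are two sources: the contribution of the $O(1)$ edges near $\{-n,n\}$, which is negligible after the relevant normalization since $\xi$ is a fixed $L^2(\nu_y)$ function --- this is the mechanism already exploited in Lemma~\ref{dirichboundterms}, controlled by a Dirichlet-form estimate together with the integrability bounds of Appendix~\ref{geomconsider} --- and the discrepancy between $\mu_{n,y}$ and $\nu_y$, absorbed by equivalence of ensembles and, where sharp quantitative control is needed, by the spectral gap estimate of Appendix~\ref{spectralgap}. Letting $n\to\infty$ then yields $X_{0,1}(\widetilde{F_n})\to\xi$ in $L^2(\nu_y)$, hence $\xi\in\mathcal{H}_y$.

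I expect the main obstacle to be the step of the second paragraph: passing from the purely combinatorial data $\{\tau^i\xi\}$, an $L^2$ family satisfying iii)--iv) only in the weak sense, to honest de Rham exactness of a smooth closed $1$-form on $S_n$. This forces a regularization of $\xi$, a careful check that the Frobenius compatibility survives both the regularization and the conditioning up to controllable boundary terms, and an honest treatment of the non-commutativity of the operators $X_{i,i+1}$, which is exactly what makes the telescopic identity fail at the level of integrands (the remark after~(\ref{telescopic})). Keeping all the boundary and ensemble-change errors under control uniformly in $n$, while transferring between the spherical and the Gaussian measures, is the secondary, more technical, difficulty.
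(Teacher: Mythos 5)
Your overall architecture coincides with the paper's: condition on a finite block, use iii)--iv) as the closedness (Frobenius) relations for a $1$-form built from the $X_{i,i+1}$ on the microcanonical sphere, integrate it via exactness (Theorem \ref{inttheorem} / Proposition \ref{deRham}), then pass back to $\nu_y$ by equivalence of ensembles with the spectral gap supplying the quantitative control. The genuine gap is in your third paragraph, and it is precisely the point the paper singles out as the crux. You assert that the contribution of the $O(1)$ edges near $\{-n,n\}$ is ``negligible after the relevant normalization since $\xi$ is a fixed $L^2(\nu_y)$ function.'' This is false: when one writes $X_{0,1}(\widetilde{F_n})$ via the identity (\ref{suminvgrad}), the two boundary terms $R^1_N$ and $R^2_N$ (coming from $p_{k+1}\partial_{p_k}F$ at the right edge and its mirror at the left edge) do \emph{not} tend to zero, not even weakly --- the paper stresses this explicitly. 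What is true, and what the proof actually establishes, is the following three-step substitute: (a) $R^1_N$ and $R^2_N$ are merely \emph{bounded} in $L^2(\nu_y)$, and already proving boundedness requires the energy cutoff $\varphi$, an exchange-symmetry/telescoping trick, and the spectral gap of Appendix \ref{spectralgap}; (b) every weak limit point of each $R^i_N$ is of the form $c_i\,p_0p_1$, because the relevant factor $h^i$ depends only on one-sided coordinates and is annihilated by all $X_{j,j+1}$ there, hence is constant; (c) condition ii), $\mathbf{E}_{\nu_y}[p_0p_1\xi]=0$, is then used --- by pairing the whole decomposition against $p_0p_1$ and passing to the limit --- to conclude $c_1-c_2=0$, so that only the \emph{difference} $R^1_N-R^2_N$ converges weakly to zero. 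Your stated role for condition ii) (``otherwise $\xi$ would have a component transverse to $\mathcal{H}_y$'') is a heuristic that does not perform this cancellation, and without steps (a)--(c) the argument does not close. Citing Lemma \ref{dirichboundterms} here is also misplaced: that lemma controls Dirichlet-form contributions of boundary edges in the construction of Section \ref{CLTVandDC}, not the non-vanishing boundary terms of the present decomposition.

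A secondary issue: you aim for strong $L^2$ convergence of $X_{0,1}(\widetilde{F_n})$ to $\xi$, but the mechanism above only yields \emph{weak} convergence (the boundary terms are killed only in the weak topology). This is harmless --- the strong and weak closures of a subspace coincide, which is exactly how the paper uses it --- but you should state that you are proving weak convergence, since the strong statement is not what comes out of this route.
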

The proof of Theorem \ref{characterizationofH_y} relies on the results obtained in Appendix \ref{geomconsider} and Appendix \ref{spectralgap}.  Additionally the introduction of a cut off function is required in order to control large energies.
\begin{proof}
The goal is to find a sequence $(F_N)_{N\geq 1}$ in $\mathcal{A}$, such that the sequence  $\{X_{0,1}(\widetilde{F_N})\}_{N \geq 1}$ converges to $\xi$ in $ L^2(\nu_y)$. As is well known, the strong and the weak closure of a subspace of a Banach space coincide, therefore it will be enough to show that $\{X_{0,1}(\widetilde{F_N})\}_{N \geq 1}$ converges weakly to $\xi$ in $ L^2(\nu_y)$.

Firstly observe that for any smooth function $F(p_{-k},\cdots, p_{k})$ we can rewrite $X_{0,1}(\widetilde{F})$, by using (\ref{covariance}), as
\begin{equation}\label{suminvgrad}
\sum_{i=-k}^{k-1}X_{i,i+1}(F)(\tau^{-i}p)+\left(p_{k+1}\frac{\partial F}{\partial p_{k}} \right)(\tau^{-k}p)-\left(p_{-k-1}\frac{\partial F}{\partial p_{-k}} \right)(\tau^{k+1}p)\;.
\end{equation}
Roughly speaking, the idea is to use the criteria obtained in Appendix \ref{geomconsider} to integrate the system (\ref{poissonequation})  in order to find a function $F$ such that $\xi$ is approximated by the sum in the first term of (\ref{suminvgrad}), and then to control the border terms.\\
We define
\begin{equation}
\label{introcutoff}
\xi_{i,i+1}^{(2N)}=\mathbb{E}_{\nu_y}[\xi_{i,i+1}|\mathfrak{F}^{2N}_{-2N}]\varphi\left(\frac{1}{4N+1}\sum_{i=-2N}^{2N} p_i^2
\right)\;,
\end{equation}
where $\xi_{i,i+1}(p)=\tau_i\xi(p)$, $\mathfrak{F}^N_{-N}$ is the sub $\sigma$-field of $\Omega$ generated by $\{p_{-N},\cdots,p_N\}$ and $\varphi$ is a smooth function with compact support such that $0 \leq \varphi \leq 1$ and $\varphi(y^2)=1$. We introduce this cutoff in order to do uniform bounds later.

Since $\nu_y$ is a product measure and the part corresponding to $\varphi$ is radial, the set of functions $\{\xi_{i,i+1}^{(2N)}\}_{-2N\leq i\leq i+1\leq 2N}$ even satisfies conditions iii) and iv). Therefore, after Theorem \ref{inttheorem}  the system

\begin{equation}\label{system}
X_{i,i+1}(g^{(N)})=\xi_{i,i+1}^{(N)}\ \  \textit{for} \ \ -2N\leq i\leq i+1\leq 2N
\end{equation}
can be integrated. Since  $\mathbb{E}_{\nu_y}[g^{(N)}|p^2_{-2N}+\cdots+p^2_N]$ is radial and the integration was performed over spheres, $\widetilde{g}^{(2N)}=g^{(2N)}-\mathbb{E}_{\nu_y}[g^{(2N)}|p^2_{-2N}+\cdots+p^2_N]$  is still a solution of the system (\ref{system}). Therefore, without lost of generality, we can suppose that $\mathbb{E}_{\nu_y}[g^{(N)}|p^2_{-2N}+\cdots+p^2_{2N}=y^2]=0$ for every $y\in \mathbb{R}^+$. This will be useful when applying the spectral gap estimate.

In order to construct the desired sequence firstly define
\[
{g}^{(N,k)}=\frac{1}{2(N+k)y^4}\mathbb{E}_{\nu_y}[p^2_{-N-k-1}p^2_{N+k+1}g^{(2N)}|\mathfrak{F}^{N+k}_{-N-k}]\;,
\]
and,
\[
\widehat{g}^{N}(p_{-7N/4},\cdots,p_{7N/4})=\frac{4}{N}\sum_{k=N/2}^{3N/4}{g}^{(N,k)}\;.
\]
Using (\ref{suminvgrad}) for ${g}^{(N,k)}$ and averaging over $k$ we obtain that
\[
X_{0,1}\left(\sum_{j=-\infty}^{\infty}\tau^j\widehat{g}^{N}\right)=
\xi+y^{-4}\{I^1_N + I^2_N + I^3_N + R^1_N - R^2_N\},
\]
where,
{\small
\begin{align*}
I^1_N&=\widehat{\sum_{k=N/2}^{3N/4}}\ \ \widehat{\sum_{i=-N-k}^{N+k}}\tau^{-i}\mathbb{E}_{\nu_y}[p^2_{N+k+1}p^2_{-N-k-1}(\xi^{(2N)}_{i,i+1}-\xi^{(N+k)}_{i,i+1})\varphi(r^2_{-2N,2N})|\mathfrak{F}^{N+k}_{-N-k}]\;, 
\\
I^2_N&=\widehat{\sum_{k=N/2}^{3N/4}}\ \ \widehat{\sum_{i=-N-k}^{N+k}}\tau^{-i}\{(\xi^{(N+k)}_{i,i+1}-\xi_{i,i+1})\mathbb{E}_{\nu_y}[p^2_{N+k+1}p^2_{-N-k-1}\varphi(r^2_{-2N,2N})|\mathfrak{F}^{N+k}_{-N-k}]\} \;,
\\
I^3_N&=\widehat{\sum_{k=N/2}^{3N/4}}\ \ \widehat{\sum_{i=-N-k}^{N+k}}\xi(p)\tau^{-i}\mathbb{E}_{\nu_y}[p^2_{N+k+1}p^2_{-N-k-1}(\varphi(r^2_{-2N,2N})-1)|\mathfrak{F}^{N+k}_{-N-k}] \;,
\\
R^1_N&=\widehat{\sum_{k=N/2}^{3N/4}}\tau^{-N-k}\{p_{N+k+1}\frac{\partial}{\partial p_{N+k}}g^{(N,k)}\}\;, 
\\
R^2_N&=\widehat{\sum_{k=N/2}^{3N/4}}\tau^{N+k+1}\{p_{-N-k-1}\frac{\partial}{\partial p_{-N-k}}g^{(N,k)}\}\;. 
\end{align*}
}
Here $\widehat{\sum_{k=N/2}^{3N/4}}=\frac{4}{N+4}\sum_{k=N/2}^{3N/4} $, that is, the hat over the sum symbol means that this sum is in fact an average. The notation $r^2_{-2N,2N}$ is just an abbreviation for $\frac{1}{4N+1}\sum_{i=-2N}^{2N} p_i^2$.

The proof of the theorem will be concluded in the following way. In Lemma \ref{middleterms}  the convergence in  $L^2(\nu_y)$ to zero of the middle terms $I^1_N,I^2_N,I^3_N$ is demonstrated.   We stress the fact that weak convergence to zero of each border term is false. However, weak convergence to zero of the sequence $\{R^1_N - R^2_N\}_{N \geq 1}$ is true, as ensured by Lemmas \ref{boundingborderterms}, \ref{subsequencelimits}  and \ref{weakconvergence}.

Therefore 
\[
\left\{ X_{0,1}\left(\sum_{j=-\infty}^{\infty}\tau^j\widehat{g}^{N}\right)\right\}_{N \geq 1}\;,
\]
is weakly convergent to $\xi$.
\end{proof}

Before entering in the proof of the lemmas, let us state two remarks. 
\begin{remark}
\label{condconv}
We know that $\mathbb{E}_{\nu_y}[\xi_{0,1}|\mathfrak{F}^N_{-N}]\xrightarrow{L^2}\xi_{0,1}$, \textit{i.e} given $\epsilon>0$ there exist $N_0\in \mathbb{N}$ such that
\[
\mathbb{E}_{\nu_y}[|\xi_{0,1}-\xi^{(N)}_{0,1}|^2]\leq \epsilon\ \  \textit{if}\ \ N\geq N_0\;.
\]
Moreover, by translation invariance we have
\[
\mathbb{E}_{\nu_y}[|\xi_{i,i+1}-\xi^{(N)}_{i,i+1}|^2]\leq \epsilon\ \  \textit{if}\ \ [-N_0-i,N_0+i]\subseteq [-N,N]\;.
\]
\end{remark}
\begin{proof}
Given $A\in \mathfrak{F}^{N-i}_{-N-i}$ we have
\begin{align*}
\int_{A}\xi^{(N)}_{i,i+1}(\tau^{-i}p)\nu_y(dp)
=\int_{A}\xi_{0,1}(p)\nu_y(dp)\;.
\end{align*}
Since in addition $\xi^{(N)}_{i,i+1}(\tau^{-i})\in \mathfrak{F}^{N-i}_{-N-i}$, we have 
\[ 
\xi^{(N)}_{i,i+1}(\tau^{-i})=\mathbb{E}_{\nu_y}[\xi_{0,1}|\mathfrak{F}^{N-i}_{-N-i}]\;,
\]
and therefore,
\[
\mathbb{E}_{\nu_y}[|\xi_{i,i+1}-\xi^{(N)}_{i,i+1}|^2]=\mathbb{E}_{\nu_y}[|\xi_{0,1}-\xi^{(N)}_{i,i+1}(\tau^{-i})|^2]\leq \mathbb{E}_{\nu_y}[|\xi_{0,1}-\xi^{(N_0)}_{0,1}|^2].
\]
\end{proof}
\begin{remark}
\label{stronglaw} 
A strong law of large numbers is satisfied for $(p^2_i)_{i\in \mathbb{Z}}$. In fact we have
\[
\mathbb{E}_{\nu_y}\left[\left(\frac{1}{N}\sum_{i=1}^{N} p_i^2-y^2\right)^2\right]\leq\frac{8y^8}{N}\;.
\]
\end{remark}
\begin{lemma}[\textbf{Middle terms}]
\label{middleterms}
For $i= 1,2,3$ we have
\begin{equation*}
\lim_{N \to \infty} \mathbb{E}_{\nu_y}[(I^i_N)^2] = 0 \;.
\end{equation*}
\end{lemma}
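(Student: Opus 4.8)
The plan is to prove the equivalent statement that $\|I^i_N\|_{L^2(\nu_y)}\to 0$ for $i=1,2,3$ (this is exactly $\mathbb E_{\nu_y}[(I^i_N)^2]\to 0$). In each case $I^i_N$ is a double average of the form $\widehat{\sum_{k}}\ \widehat{\sum_{i}}\,\tau^{-i}(\cdots)$, so by the triangle inequality in $L^2(\nu_y)$ and the translation invariance of $\nu_y$ (which kills the outer $\tau^{-i}$) it suffices to bound, uniformly in $k\in[N/2,3N/4]$, the $L^2$-norm of each summand and then average. Every summand splits (or can be compared) into a bounded conditional-expectation factor involving $p^2_{N+k+1}p^2_{-N-k-1}$ and $\varphi$, times a ``difference'' factor. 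The bounded factor is controlled by the \emph{product} structure of $\nu_y$: since $0\le\varphi\le 1$ and $p_{N+k+1},p_{-N-k-1}$ are independent of $\mathfrak F^{N+k}_{-N-k}$ with $\mathbb E_{\nu_y}[p_0^2]=y^2$, one has $\mathbb E_{\nu_y}[p^2_{N+k+1}p^2_{-N-k-1}\varphi(r^2_{-2N,2N})\mid\mathfrak F^{N+k}_{-N-k}]\le y^4$. The difference factor will be small for the ``bulk'' indices $i$ and only trivially bounded ($\le 2\|\xi\|_{L^2(\nu_y)}$, by contractivity of conditional expectation) for the $O(1)$ boundary indices, which form a fraction $O(1/N)\to0$ of the average.

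For $I^2_N$ the difference factor is $\mathbb E_{\nu_y}[\tau^i\xi\mid\mathfrak F^{N+k}_{-N-k}]-\tau^i\xi$. By Remark~\ref{condconv}, given $\epsilon>0$ this has $L^2$-norm at most $\sqrt\epsilon$ for all $i$ with $[i-N_0,i+N_0]\subseteq[-(N+k),N+k]$, i.e.\ for all but $2N_0$ values of $i$; hence $\limsup_N\|I^2_N\|_2\le y^4\sqrt\epsilon$ for every $\epsilon$, so $\|I^2_N\|_2\to 0$. Here one also uses the exact identity $\widehat{\sum_i}\,\tau^{-i}(\tau^i\xi)=\xi$, which is precisely why $\xi$ enters the decomposition with the correct coefficient and only the difference $\mathbb E[\tau^i\xi\mid\cdot]-\tau^i\xi$ survives in $I^2_N$. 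For $I^3_N$ the difference factor is $\mathbb E_{\nu_y}[p^2_{N+k+1}p^2_{-N-k-1}(\varphi(r^2_{-2N,2N})-1)\mid\mathfrak F^{N+k}_{-N-k}]$, again bounded by $y^4$; since $r^2_{-2N,2N}\to y^2$ in $L^2(\nu_y)$ by the law of large numbers of Remark~\ref{stronglaw} and $\varphi$ is continuous with $\varphi(y^2)=1$, this factor tends to $0$ in $L^1(\nu_y)$ and, being uniformly bounded, in every $L^p(\nu_y)$, $p<\infty$. Since $I^3_N=\xi\cdot G_N$ with $\|G_N\|_\infty\le y^4$ and $G_N\to 0$ in $L^p$, a truncation estimate $\mathbb E[\xi^2 G_N^2]\le M^2\,\mathbb E[G_N^2]+y^8\,\mathbb E[\xi^2\mathbf 1_{\{|\xi|>M\}}]$ gives $\|I^3_N\|_2\to 0$ using only $\xi\in L^2(\nu_y)$.

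The term $I^1_N$ is the delicate one, and it is where the cutoff $\varphi$ is needed. Its difference factor compares the two conditional expectations $\mathbb E_{\nu_y}[\tau^i\xi\mid\mathfrak F^{2N}_{-2N}]$ and $\mathbb E_{\nu_y}[\tau^i\xi\mid\mathfrak F^{N+k}_{-N-k}]$; after conditioning, the summand is a conditional covariance of $\tau^i\xi$ against $p^2_{N+k+1}p^2_{-N-k-1}\varphi(r^2_{-2N,2N})$ given $\mathfrak F^{N+k}_{-N-k}$. For bulk indices both conditional expectations lie within $\sqrt\epsilon$ of $\tau^i\xi$ in $L^2$, by reverse martingale convergence applied uniformly as in Remark~\ref{condconv}, so their difference $W$ satisfies $\|W\|_2\le 2\sqrt\epsilon$. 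The obstacle — and this is the ``large velocities'' difficulty advertised in the introduction — is that $W$ is \emph{not} measurable with respect to the smaller $\sigma$-field and is multiplied by the unbounded weight $p^2_{N+k+1}p^2_{-N-k-1}$, so it cannot simply be factored out as for $I^2_N$. The role of $\varphi(r^2_{-2N,2N})$ is to localise everything to the event $\{\sum_{|j|\le 2N}p_j^2=O(N)\}$, on which the weighted product is integrable with explicit control; combining this localisation with $W\to 0$ in $L^2$ one concludes $\|I^1_N\|_2\to 0$, e.g.\ by passing to convergence in probability and invoking dominated convergence. I expect the careful balancing of the $O(N)$ energy bound coming from the support of $\varphi$ against the rate at which $W$ becomes small in $L^2$ to be the main technical point; once $I^1_N$, $I^2_N$, $I^3_N$ are each shown to vanish in $L^2(\nu_y)$, the lemma follows.
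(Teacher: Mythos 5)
Your treatment of $I^2_N$ and $I^3_N$ is sound and close to the paper's: for $I^2_N$ you factor out the $\mathfrak F^{N+k}_{-N-k}$-measurable weight, bounded by $y^4$ via the product structure of $\nu_y$ and $0\le\varphi\le1$, and invoke Remark \ref{condconv} on the bulk indices; for $I^3_N$ your truncation of $\xi$ combined with $\|G_N\|_\infty\le y^4$ and $G_N\to0$ in $L^2$ (via the law of large numbers of Remark \ref{stronglaw} and the Lipschitz continuity of $\varphi$ at $y^2$) is a valid variant of the paper's decomposition of $I^3_N$ into $J^1_N$ and $J^2_N$.

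The genuine gap is in $I^1_N$, precisely at the point you label ``the main technical point'' and leave open. The route you sketch --- localizing via the support of $\varphi(r^2_{-2N,2N})$ to $\{\sum_{|j|\le 2N}p_j^2=O(N)\}$ and ``balancing'' that against the rate at which $W=\xi^{(2N)}_{i,i+1}-\xi^{(N+k)}_{i,i+1}$ becomes small --- cannot work as stated: on the support of the cutoff an individual coordinate can still be of order $\sqrt N$, so the weight $p^2_{N+k+1}p^2_{-N-k-1}$ can be of order $N^2$, and the reverse-martingale convergence in Remark \ref{condconv} carries no rate, so there is nothing to balance. The resolution is elementary and is what the paper means by ``Schwartz inequality'': apply Cauchy--Schwarz \emph{inside} the conditional expectation,
\[
\left(\mathbb E_{\nu_y}[\,p^2_{N+k+1}p^2_{-N-k-1}W\varphi\mid\mathfrak F^{N+k}_{-N-k}\,]\right)^2
\;\le\;
\mathbb E_{\nu_y}[\,p^4_{N+k+1}p^4_{-N-k-1}\mid\mathfrak F^{N+k}_{-N-k}\,]\;
\mathbb E_{\nu_y}[\,W^2\varphi^2\mid\mathfrak F^{N+k}_{-N-k}\,]\;,
\]
and note that, since $\nu_y$ is a product measure and $p_{N+k+1},p_{-N-k-1}$ lie outside the conditioning block, the first factor is identically $9y^8$ --- a constant independent of $N$, $k$ and the configuration. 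Taking expectations, each summand has $L^2(\nu_y)$ norm at most $3y^4\|W\|_{L^2(\nu_y)}$, which vanishes for bulk indices by Remark \ref{condconv} applied to both conditional expectations (together with the LLN for the cutoff factors), and the boundary indices contribute $O(N_0/N)$ as in your argument for $I^2_N$. In particular the unboundedness of the weight is a non-issue, no localization is needed, and $\varphi$ plays no role in this lemma beyond being bounded by $1$; its real purpose is the uniform control of $\widehat{\sum_{k=N/2}^{3N/4}}p^2_{N+k}$ in Lemma \ref{boundingborderterms}.
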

\begin{proof}
The convergence to zero as N tends to infinity of $I^1_N$ and $I^2_N$ in $L^2(\nu_y)$ follows directly from Schwartz inequality, Remark \ref{condconv} and the fact that $\varphi$ is a bounded function. \\
Using exchange symmetry of the measure, $I^3_N$ can be rewritten as
\[
\xi(p)\widehat{\sum_{k=N/2}^{3N/4}}\
\widehat{\sum_{i=-N-k}^{N+k}}\mathbb{E}_{\nu_y}[\widehat{\sum_{j=1}^{N-k}}p^2_{N+k+j}p^2_{-N-k-j}(\varphi(r^2_{-2N,2N})-1)|\mathfrak{F}^{N+k}_{-N-k}](\tau^{-i}p) \;,
\]
and then we decompose it as $J^1_N+y^2J^2_N$, where
{\small 
\[J^1_N(p)=
\xi(p)\widehat{\sum_{k=N/2}^{3N/4}}\
\widehat{\sum_{i=-N-k}^{N+k}}
\mathbb{E}_{\nu_y}[\widehat{\sum_{j=1}^{N-k}}\{p^2_{N+k+j}p^2_{-N-k-j}-y^4\}(\varphi(r^2_{-2N,2N})-1)|\mathfrak{F}^{N+k}_{-N-k}](\tau^{-i}p),
\]
}
and,
\[J^2_N(p)=
\xi(p)\widehat{\sum_{k=N/2}^{3N/4}}\
\widehat{\sum_{i=-N-k}^{N+k}}\mathbb{E}_{\nu_y}[\varphi(r^2_{-2N,2N})-1|\mathfrak{F}^{N+k}_{-N-k}](\tau^{-i}p). 
\]
Firstly observe that
\[
|J^1_N|^2\leq |\xi(p)|^2\widehat{\sum_{k=N/2}^{3N/4}}\
\widehat{\sum_{i=-N-k}^{N+k}}\mathbb{E}_{\nu_y}\left[\left(\widehat{\sum_{j=1}^{N-k}}\{p^2_{N+k+j}p^2_{-N-k-j}-y^4\}\right)^2\right].
\]
Being the expectation into last expression bounded by $\frac{8y^8}{N-k}$, we obtain
\[
||J^1_N||^2_{L^2(\nu_y)}\leq \frac{32y^4}{N}||\xi||^2_{L^2(\nu_y)}\;.
\]
On the other hand, writing explicitly the conditional expectation appearing in $J^2_N$ we see that
{\small
\[|J^2_N(p)|^2 \leq
|\xi(p)|^2\widehat{\sum_{k=N/2}^{3N/4}}\
\widehat{\sum_{i=-N-k}^{N+k}}\int |\varphi(\frac{1}{4N+1}\sum_{|j|>N+k}q^2_{j}+\frac{1}{4N+1}\sum_{|j|\leq N+k}p^2_{j+i})-1|^2\nu_y(dp).
\]
}
Rewrite the integral into last expression as
\[
\int |\varphi(\frac{1}{4N+1}\sum_{|j|>N+k}(q^2_{j}-y^2)+\frac{1}{4N+1}\sum_{|j|\leq N+k}(p^2_{j+i}-y^2)+y^2)-1|^2\nu_y(dp)\;.
\]
Using the fact that $\varphi$ is a Lipschitz positive function bounded by $1$ and satisfying $\varphi(y^2)=1$, we get that $|J^2_N|^2$ is bounded by
{\small
\[
|\xi(p)|^2\widehat{\sum_{k=N/2}^{3N/4}}\
\widehat{\sum_{i=-N-k}^{N+k}}1\wedge\int|\frac{1}{4N+1}\sum_{|j|>N+k} (q^2_{j}-y^2)+\frac{1}{4N+1}\sum_{|j|\leq N+k}(p^2_{j+i}-y^2)|^2\nu_y(dp)\;,
\]
}
where $a \wedge b$ denote the minimum of $\{a,b\}$. 

Therefore, taking expectation and using the strong law of large numbers together with the dominated convergence theorem, the convergence to zero as N tends to infinity of $I^3_N$ in $L^2(\nu_y)$ is proved.
\end{proof}

\begin{lemma}[\textbf{Bounding border terms}]
\label{boundingborderterms}
The sequences $\{R^i_N\}_{N \geq 1}$ are bounded in $L^2(\nu_y)$ for $i = 1,2$.
\end{lemma}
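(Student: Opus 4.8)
The strategy is to reduce the estimate to a single scale and then close it using the cut--off $\varphi$ and the sharp spectral gap. I treat $R^1_N$; the sequence $\{R^2_N\}_{N\ge1}$ is handled symmetrically by the reflection $p_i\mapsto p_{-i}$, which preserves $\nu_y$ and interchanges the two border terms. Since $\nu_y$ is translation invariant and $\widehat{\sum}_{k}$ is a genuine average, convexity of $z\mapsto z^2$ gives
\[
\|R^1_N\|_{L^2(\nu_y)}^2 \;\le\; \widehat{\sum_{k=N/2}^{3N/4}} \big\| p_{N+k+1}\,\partial_{p_{N+k}}g^{(N,k)}\big\|_{L^2(\nu_y)}^2 ,
\]
and as $g^{(N,k)}$ depends only on $p_{-N-k},\dots,p_{N+k}$, the variable $p_{N+k+1}$ is $\nu_y$--independent of it, so the right--hand side equals $y^2\,\widehat{\sum}_k\|\partial_{p_{N+k}}g^{(N,k)}\|_{L^2(\nu_y)}^2$. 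Thus it suffices to bound $\|\partial_{p_{N+k}}g^{(N,k)}\|_{L^2(\nu_y)}$ uniformly in $N$ and in $k\in[N/2,3N/4]$.

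To this end I differentiate under the conditional expectation in the definition of $g^{(N,k)}$ (legitimate because $p_{N+k}$ is a conditioning variable and the weight $p^2_{-N-k-1}p^2_{N+k+1}$ does not involve it) and use the field relation $X_{N+k,N+k+1}g^{(2N)}=\xi^{(2N)}_{N+k,N+k+1}$ in the form $p_{N+k+1}\partial_{p_{N+k}}g^{(2N)}=\xi^{(2N)}_{N+k,N+k+1}+p_{N+k}\,\partial_{p_{N+k+1}}g^{(2N)}$. Since $p_{N+k+1}$ is one of the variables integrated out by $\mathbb{E}_{\nu_y}[\,\cdot\mid\mathfrak{F}^{N+k}_{-N-k}]$, a Gaussian integration by parts removes the derivative $\partial_{p_{N+k+1}}$, yielding
\[
\partial_{p_{N+k}}g^{(N,k)} \;=\; \frac{p_{N+k}}{2(N+k)y^{6}}\,\mathbb{E}_{\nu_y}\!\big[p^2_{-N-k-1}(p^2_{N+k+1}-y^2)\,g^{(2N)}\,\big|\,\mathfrak{F}^{N+k}_{-N-k}\big]\;+\;\rho_{N,k},
\]
where $\rho_{N,k}$ collects the contribution of $\xi^{(2N)}_{N+k,N+k+1}$ and satisfies $\|\rho_{N,k}\|_{L^2(\nu_y)}=O(1/N)$ uniformly, since $\|\xi^{(2N)}_{i,i+1}\|_{L^2(\nu_y)}\le\|\xi\|_{L^2(\nu_y)}$ and the relevant Gaussian moments are bounded constants.

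For the main term I apply the conditional Cauchy--Schwarz inequality; as the Gaussian moments of $p^2_{-N-k-1}$ and of $p^2_{N+k+1}-y^2$ are explicit constants, its $L^2(\nu_y)$--norm squared is bounded by $\tfrac{C}{N^2}\,\mathbb{E}_{\nu_y}[\,p^2_{N+k}\,(g^{(2N)})^2\,]$. Here the cut--off is crucial: the data $\xi^{(2N)}_{i,i+1}$ vanish on every sphere $\{\sum_{|i|\le 2N}p_i^2=r^2\}$ with $r^2/(4N+1)\notin\mathrm{supp}\,\varphi$, and on such a sphere a function annihilated by all the $X_{i,i+1}$ is constant (the $X_{i,i+1}$ are tangent to the sphere and the microcanonical dynamics on it is ergodic), so $g^{(2N)}$, having zero spherical average, vanishes there. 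Hence $g^{(2N)}$ is supported where $\sum_{|i|\le 2N}p_i^2\le C_\varphi(4N+1)$, on which $p^2_{N+k}\le C_\varphi(4N+1)$, whence $\mathbb{E}_{\nu_y}[\,p^2_{N+k}(g^{(2N)})^2\,]\le C_\varphi(4N+1)\,\mathbb{E}_{\nu_y}[(g^{(2N)})^2]$, and we are reduced to a bound of the form $\mathbb{E}_{\nu_y}[(g^{(2N)})^2]=O(N)$.

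This last bound is the heart of the matter and is where I expect the main obstacle to lie. On each sphere the sharp spectral gap of Appendix \ref{spectralgap} gives $\mathrm{Var}_{\mu_{2N,r}}(g^{(2N)})\le C\,N^{2}\,D_{2N,r}(g^{(2N)})$, with $D_{2N,r}(g^{(2N)})=\sum_i\int a(p_i,p_{i+1})(\xi^{(2N)}_{i,i+1})^2\,d\mu_{2N,r}$; the equivalence of ensembles of Appendix \ref{equiensem}, combined with the confinement of the temperature $r^2/(4N+1)$ to the compact set $\mathrm{supp}\,\varphi$ on which $\xi$ is uniformly square--integrable, controls the Dirichlet form, and integrating over $r$ against the law of the radius --- using that $g^{(2N)}$ has zero spherical average, so $\mathbb{E}_{\nu_y}[(g^{(2N)})^2]$ is exactly the averaged variance --- closes the estimate. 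The delicate point is the bookkeeping of the powers of $N$ in this step: the factor $1/N$ gained from the normalization $\tfrac1{2(N+k)}$ in $g^{(N,k)}$ and the factor supplied by the cut--off must precisely absorb the growth of $g^{(2N)}$ on boxes of size of order $N$, which is exactly why the sharp ($O(N^{-2})$) spectral gap and the cut--off $\varphi$ are both indispensable.
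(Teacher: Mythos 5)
Your reduction of $R^1_N$ to the single term $p_{N+k+1}\partial_{p_{N+k}}g^{(N,k)}$, the splitting via $X_{N+k,N+k+1}$, and the Gaussian integration by parts producing the weight $(p^2_{N+k+1}-y^2)$ all match the paper's argument, and your observation that the cut--off forces $g^{(2N)}$ to vanish off the set $\{\tfrac{1}{4N+1}\sum_{|i|\le 2N}p_i^2\in\mathrm{supp}\,\varphi\}$ is correct. But the step you yourself flag as the heart of the matter does fail, and by a wide margin. The spectral gap of Appendix \ref{spectralgap} gives $\mathbb{E}_{\nu_y}[(g^{(2N)})^2]\le CN^2 D_{2N}(g^{(2N)})$, and the Dirichlet form is a sum of order $N$ terms $\mathbb{E}_{\nu_y}[a\,(\xi^{(2N)}_{i,i+1})^2]$, each of size $O(1)$; hence the only available bound is $\mathbb{E}_{\nu_y}[(g^{(2N)})^2]=O(N^3)$, not the $O(N)$ your direct Cauchy--Schwarz requires. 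With your prefactor $C/N^2$ and the cut--off bound $p^2_{N+k}\le C(4N+1)$, your estimate of the main term is $O(N^2)$, which diverges. Neither the normalization $1/(2(N+k))$ nor the cut--off can supply the missing two powers of $N$.

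The paper closes this gap with an idea absent from your proposal: exchange symmetry of $\nu_y$. Since $g^{(2N)}$ depends on the outer variables $p_{N+k+1},\dots,p_{2N}$ symmetrically up to the controlled error $g^{(2N)}\circ\sigma^{j,N+k+1}-g^{(2N)}$, one may replace the single factor $(p^2_{N+k+1}-y^2)$ inside the conditional expectation by the average $\widehat{\sum}_{j=N+k+1}^{2N}(p^2_j-y^2)$, whose second moment is $O(1/N)$ by the law of large numbers; the exchange errors are summed telescopically and bounded by $\mathbb{E}_{\nu_y}[(X_{j,j+1}g^{(2N)})^2]=\mathbb{E}_{\nu_y}[(\xi^{(2N)}_{j,j+1})^2]$, i.e.\ again by the Dirichlet form. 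This supplies one extra factor of $N^{-1}$ in the squared norm. The second missing factor comes from not applying Jensen in $k$ term by term as you do: keeping the average $\widehat{\sum}_k p^2_{N+k}$ intact, the cut--off bounds it by a constant (it is $\tfrac{4}{N}\sum_k p^2_{N+k}\le \tfrac{4}{N}\,C_\varphi(4N+1)$ on the support of $g^{(2N)}$), rather than by $O(N)$ for a single $p^2_{N+k}$. Together these yield a prefactor $N^{-3}$ against $\mathbb{E}_{\nu_y}[(g^{(2N)})^2]=O(N^3)$, which is exactly what closes the estimate. Your remaining steps (the $\rho_{N,k}$ error term, the reflection argument for $R^2_N$) are fine, but without the exchange--and--telescope device the lemma is not proved.
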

\begin{proof}
Recall that
\begin{equation*}
R^1_N=\widehat{\sum_{k=N/2}^{3N/4}}\frac{1}{2(N+k)}\tau^{-N-k}\{p_{N+k+1}\mathbb{E}_{\nu_y}[p^2_{-N-k-1}p^2_{N+k+1}\frac{\partial}{\partial p_{N+k}}g^{(2N)}|\mathfrak{F}^{N+k}_{-N-k}]\}\;.
\end{equation*}

Using the fact that $X_{N+k,N+k+1} = p_{N+k+1}\frac{\partial}{\partial p_{N+k}} - p_{N+k} \frac{\partial}{\partial p_{N+k+1}}$, we can rewrite last line as the sum of the following two terms. 
\begin{equation*}
\widehat{\sum_{k=N/2}^{3N/4}}\frac{1}{2(N+k)}\tau^{-N-k}\{p_{N+k+1}\mathbb{E}_{\nu_y}[p^2_{-N-k-1}p_{N+k+1}X_{N+k,N+k+1}g^{(2N)}|\mathfrak{F}^{N+k}_{-N-k}]\}
\end{equation*}
\begin{equation*}
\widehat{\sum_{k=N/2}^{3N/4}}\frac{1}{2(N+k)}\tau^{-N-k}\{p_{N+k}p_{N+k+1}\mathbb{E}_{\nu_y}[p^2_{-N-k-1}p_{N+k+1}\frac{\partial}{\partial p_{N+k+1}}g^{(2N)}|\mathfrak{F}^{N+k}_{-N-k}]\}\;.
\end{equation*}

By Schwartz inequality and (\ref{system}) we can see that the $L^2(\nu_y)$ norm of the first term is bounded by $\frac{y^{3}}{N}||\xi||_{L^2(\nu_y)}$. After integration by parts, the second term can be written as 

\begin{equation}
\label{secondterm}
\widehat{\sum_{k=N/2}^{3N/4}}\frac{1}{2(N+k)y^2}\tau^{-N-k}\{p_{N+k}p_{N+k+1}\mathbb{E}_{\nu_y}[p^2_{-N-k-1}(p^2_{N+k+1}-y^2)g^{(2N)}|\mathfrak{F}^{N+k}_{-N-k}]\}\;.
\end{equation}
Denote by $\sigma^{j,N+k+1}$ the interchange of coordinates $p_j$ and $p_{N+k+1}$. Using exchange invariance of the measure, we can see that the conditional expectation appearing in last expression is equal to
\[
\mathbb{E}_{\nu_y}[p^2_{-N-k-1}(p^2_{j}-y^2)(g^{(2N)}\circ\sigma^{j,N+k+1})|\mathfrak{F}^{N+k}_{-N-k}]\;,
\]
for $j = N+k+1, \cdots, 2N$. This permits to introduce a telescopic sum which will serve later to obtain an extra $\frac{1}{N}$ in order to use a spectral gap estimate.
Indeed, we decompose (\ref{secondterm}) as the sum of the following two terms.
\begin{equation}
\label{R21}
\widehat{\sum_{k=N/2}^{3N/4}}\frac{1}{2(N+k)y^2}\tau^{-N-k}
\mathbb{E}_{\nu_y}[p^2_{-N-k-1}\widehat{\sum_{j=N+k+1}^{2N}}(p^2_{j}-y^2)g^{(2N)}|\mathfrak{F}^{N+k}_{-N-k}]
\end{equation}
and
{\small
\begin{equation}
\label{R22}
\widehat{\sum_{k=N/2}^{3N/4}}\frac{1}{2(N+k)y^2}\tau^{-N-k}
\mathbb{E}_{\nu_y}[p^2_{-N-k-1}\widehat{\sum_{j=N+k+1}^{2N}}(p^2_{j}-y^2)(g^{(2N)}\circ\sigma^{j,N+k+1}-g^{(2N)})|\mathfrak{F}^{N+k}_{-N-k}]\;.
\end{equation}
}
By Schwartz inequality, the square of the conditional expectations appearing in last expressions are respectively bounded by
\[
CN^{-1}y^{12}\mathbb{E}_{\nu_y}[(g^{(2N)})^2|\mathfrak{F}^{N+k}_{-N-k}]
\]
and
\[
Cy^8\mathbb{E}_{\nu_y}[\widehat{\sum_{j=N+k+1}^{2N}}(g^{(2N)}\circ\sigma^{j,N+k+1}-g^{(2N)})^2|\mathfrak{F}^{N+k}_{-N-k}]\;,
\]
for a universal constant $C$.

Therefore, again by Schwartz inequality, we can see that the $L^2(\nu_y)$ norms of (\ref{R21}) and (\ref{R22}) are respectively bounded by 
\begin{equation}
\label{R31}
\frac{Cy^{10}}{N^3}\mathbb{E}_{\nu_y}\big[\big(\widehat{\sum_{k=N/2}^{3N/4}}p^2_{N+k}\big)(g^{(2N)})^2\big]
\end{equation}
and
\begin{equation}
\label{R32}
\frac{Cy^6}{N^2}\mathbb{E}_{\nu_y}\big[\big(\widehat{\sum_{k=N/2}^{3N/4}}p^2_{N+k}\big)
   \widehat{\sum_{j=3N/2+1}^{2N}}(g^{(2N)}\circ\sigma^{j,N+1}-g^{(2N)})^2\big]\;.
\end{equation}
Observe that $\widehat{\sum_{k=N/2}^{3N/4}}p^2_{N+k}$ can be uniformly estimated because of the cutoff introduced in (\ref{introcutoff}). 

Using the spectral gap estimate obtained in Appendix \ref{spectralgap} we can bound (\ref{R31}) by a constant, and thanks to the basic inequality
\[
\mathbb{E}_{\nu_y}\big[(g^{(2N)}\circ\sigma^{j,j+1}-g^{(2N)})^2\big] 
\leq
C\mathbb{E}_{\nu_y}\big[(X_{j,j+1}g^{(2N)})^2\big] \;,
\]
we can see after telescoping, that (\ref{R32}) is also uniformly bounded.
\end{proof}
\begin{lemma}[\textbf{Characterization of weak limits}]
\label{subsequencelimits}
Every weak limit function of the sequence $\{R^1_N - R^2_N\}_{N \geq 1}$ is of the form $cp_0p_1$ for some constant $c$.
\end{lemma}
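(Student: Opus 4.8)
The plan is to pass to weak subsequential limits, exploit the factored structure of the two border terms to reduce everything to a function of the coordinates $p_0,p_1$ alone, and then identify that function as a multiple of $p_0p_1$.

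By Lemma \ref{boundingborderterms} the sequences $\{R^1_N\}$ and $\{R^2_N\}$ are bounded in $L^2(\nu_y)$, so along a subsequence $R^1_N\rightharpoonup R^1$ and $R^2_N\rightharpoonup R^2$, and it suffices to prove $R^1-R^2=cp_0p_1$. Rewriting the translated derivatives occurring in $R^1_N$ and $R^2_N$ by means of (\ref{covariance}), as in (\ref{suminvgrad}), one sees that after the translation $\tau^{-N-k}$ each summand of $R^1_N$ equals $p_1\,a_{N,k}$, where $a_{N,k}$ does not involve $p_1$ and is measurable with respect to $\sigma(p_0,p_{-1},p_{-2},\dots)$; symmetrically, after $\tau^{N+k+1}$, each summand of $R^2_N$ equals $p_0\,b_{N,k}$ with $b_{N,k}$ not involving $p_0$ and measurable with respect to $\sigma(p_1,p_2,p_3,\dots)$. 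Hence $R^1_N=p_1A_N$ and $R^2_N=p_0B_N$, and since $\mathbb E_{\nu_y}[(p_1A_N)^2]=y^2\,\mathbb E_{\nu_y}[A_N^2]$ the families $\{A_N\},\{B_N\}$ are again bounded in $L^2(\nu_y)$; refining once more, $A_N\rightharpoonup\widetilde A$ and $B_N\rightharpoonup\widetilde B$. Testing $p_1A_N$ against a bounded $\phi$ and replacing $\phi$ by its conditional expectation given the coordinates other than $p_1$ gives $R^1=p_1\widetilde A$, and likewise $R^2=p_0\widetilde B$, where $\widetilde A$ still does not involve $p_1$ and $\widetilde B$ does not involve $p_0$.

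The crux is to \emph{localize} $\widetilde A$ and $\widetilde B$: one shows that $\widetilde A$ is $\sigma(p_0)$-measurable and $\widetilde B$ is $\sigma(p_1)$-measurable. The point is that $A_N$ is assembled from $\partial_{p_{N+k}}g^{(2N)}$ through conditional expectations, so after translation its dependence on the coordinates $p_{-1},p_{-2},\dots$ is spread over a window of length of order $N$; combining the uniform $L^2$ bounds, the cutoff introduced in (\ref{introcutoff}), the strong law of large numbers of Remark \ref{stronglaw} and Remark \ref{condconv}, this spread-out dependence averages out when $A_N$ is tested against a local function. I expect this localization step to be the main difficulty, since it is precisely here that the spectral-gap estimate of Appendix \ref{spectralgap} (already used in the proof of Lemma \ref{boundingborderterms}) is needed to keep the tails of $g^{(2N)}$ under control, and some care is needed because $A_N$, although concentrated near the coordinate $p_0$, genuinely depends on the whole window.

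It then remains, with $R^1-R^2=p_1\widetilde A(p_0)-p_0\widetilde B(p_1)$ a function of $(p_0,p_1)$ only, to show that $\widetilde A$ and $\widetilde B$ are linear with zero mean, i.e. $\widetilde A(p_0)=\alpha p_0$ and $\widetilde B(p_1)=\beta p_1$. Expanding in Hermite polynomials and integrating by parts, $\langle A_N,H_m(p_0/y)\rangle$ equals, up to a constant, a combination of terms $\mathbb E_{\nu_y}[p^2_{-N-k-1}\,p^2_{N+k+1}\,g^{(2N)}\,H_{m+1}(p_{N+k}/y)]$; the identities $X_{i,i+1}g^{(2N)}=\xi^{(2N)}_{i,i+1}$, the vanishing of the conditional mean of $g^{(2N)}$ on spheres, and properties i)--ii) of $\xi$ force all these limits to be $0$ for $m\neq1$, and symmetrically for $\widetilde B$; this gives $R^1-R^2=(\alpha-\beta)p_0p_1=:cp_0p_1$, as claimed. (The value $c=0$, hence $R^1_N-R^2_N\rightharpoonup0$, is obtained afterwards in Lemma \ref{weakconvergence}: since $X_{0,1}\left(\sum_{j}\tau^j\widehat g^{N}\right)=\xi+y^{-4}(I^1_N+I^2_N+I^3_N+R^1_N-R^2_N)$ and the middle terms vanish by Lemma \ref{middleterms}, the weak limit $\xi+cy^{-4}p_0p_1$ lies in the closure of $\mathcal B_y$ and therefore satisfies $\mathbb E_{\nu_y}[p_0p_1\,\cdot\,]=0$, which forces $c=0$.)
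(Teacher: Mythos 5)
There is a genuine gap: the two steps on which your whole argument rests --- the ``localization'' of $\widetilde A$ to a function of $p_0$ alone, and the subsequent linearity $\widetilde A(p_0)=\alpha p_0$ --- are only asserted, not proved. You yourself flag the first as ``the main difficulty,'' and the mechanism you invoke for it (spread-out dependence ``averaging out'' against local test functions, plus the cutoff, the law of large numbers and the spectral gap) is not an argument; it is not clear it can be turned into one without rediscovering what the paper actually does. Likewise, the Hermite-expansion computation claimed to kill all coefficients with $m\neq 1$ is not justified: nothing in properties i)--ii) of $\xi$ or in $X_{i,i+1}g^{(2N)}=\xi^{(2N)}_{i,i+1}$ obviously forces $\mathbb E_{\nu_y}[p^2_{-N-k-1}p^2_{N+k+1}g^{(2N)}H_{m+1}(p_{N+k}/y)]\to 0$.

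The missing idea is the paper's sharper factorization and gradient bound. Writing $p_{N+k+1}\partial_{p_{N+k}}=X_{N+k,N+k+1}+p_{N+k}\partial_{p_{N+k+1}}$, the $X$-part of $R^1_N$ is $O(1/N)$ in $L^2$ (by (\ref{system}) and Schwarz), and an integration by parts in $p_{N+k+1}$ turns the rest into $p_0p_1h^1_N$ with $h^1_N$ as in (\ref{h1}) --- i.e.\ one extracts the \emph{full} factor $p_0p_1$, not just $p_1$ as in your $R^1_N=p_1A_N$. The decisive estimate is then $\|X_{i,i+1}h^1_N\|_{L^2(\nu_y)}\leq CN^{-1}\|\xi\|_{L^2(\nu_y)}$ for all $\{i,i+1\}\subseteq\{0,-1,-2,\dots\}$, coming from the $1/(2(N+k))$ prefactor in (\ref{h1}). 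This forces every weak limit $h^1$ to satisfy $X_{i,i+1}h^1=0$ for all such $i$, i.e.\ to be invariant under all rotations of the coordinates it depends on, hence constant --- which is strictly stronger than your claim that $\widetilde A$ is $\sigma(p_0)$-measurable and immediately yields $cp_0p_1$ with no Hermite analysis. Your final parenthetical remark (deducing $c=0$ from the orthogonality $\mathbb E_{\nu_y}[p_0p_1\xi]=0$) does match Lemma \ref{weakconvergence}, but that is not the content of the present lemma.
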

\begin{proof}
Let us firstly consider the sequence $\{R^1_N\}_{N \geq 1}$. In Lemma \ref{boundingborderterms} we obtain a decomposition of $R^1_N$ as the sum of two terms, one of which converges to zero in $L^2(\nu_y)$. The other term, namely (\ref{secondterm}), is equal to $p_0p_1h^{1}_N(p_{0},\cdots,p_{-7N/2})$ where
\begin{equation}
\label{h1}
h^{1}_N\;=\;\widehat{\sum_{k=N/2}^{3N/4}}\frac{1}{2(N+k)y^2}\tau^{-N-k}\mathbb{E}_{\nu_y}[p^2_{-N-k-1}(p^2_{N+k+1}-y^2)g^{(2N)}|\mathfrak{F}^{N+k}_{-N-k}]\;.
\end{equation}
It was also proved that $\{p_0p_1h^{1}_N\}_{N \geq 1}$ is bounded in $L^2(\nu_y)$, therefore it contains a weakly convergent subsequence, say $\{p_0p_1h^{1}_{N'}\}_{N'}$. By similar arguments as in the proof of Lemma \ref{boundingborderterms}, we can conclude that $\{h^{1}_N\}_{N \geq 1}$ is bounded in $L^2(\nu_y)$, therefore $\{h^{1}_{N'}\}_{N'}$ contains a weakly convergent subsequence, whose limit will be denoted by $h^{1}$.

Applying the operator $X_{i+i+1}$ in the two sides of (\ref{h1}) and using Schwartz inequality, is easy to see that
\[
||X_{i,i+1}h^{1}_N||_{L^2(\nu_y)} \leq \frac{C}{N}||\xi||_{L^2(\nu_y)} \quad \quad \textit{for} \quad \quad \{i,i+1\} \subseteq \{0,-1,-2,\cdots\} \;,
\]
which implies that $X_{i,i+1}h^1=0$ for $\{i,i+1\} \subseteq \{0,-1,-2,\cdots\}$.
This, together with the fact that the function $h^{1}$ just depends on $\{p_0,p_{-1},p_{-2},\cdots\}$, permit to conclude that $h^{1}$ is a constant function, let's say $c$. Therefore $\{p_0p_1h^{1}_{N'}\}_{N'}$ converges weakly to $cp_0p_1$.

This proves that for every weakly convergent subsequence of $\{R^1_N\}_{N \geq 1}$ there exist a constant $c$ such that the limit is $cp_0p_1$. Exactly the same can be said about $\{R^2_N\}_{N \geq 1}$. 

Finally suppose that  $\{R^1_{N'} - R^2_{N'} \}_{{N'} \geq 1}$ is a  subsequence converging weakly to a function $f$. The boundness of  $\{R^1_{N'}\}_{{N'} \geq 1}$ and $\{R^2_{N'}\}_{{N'} \geq 1}$ implies the existence of further subsequences  $\{R^1_{N''}\}_{{N''} \geq 1}$ and $\{R^2_{N''}\}_{{N''} \geq 1}$ converging weakly to $c_1p_0p_1$ and $c_2p_0p_1$, respectively. Therefore, by unicity of the limit, we have $f = (c_1 - c_2)p_0p_1$.
\end{proof}

\begin{lemma}[\textbf{Convergence to zero}]
The sequence $\{R^1_N - R^2_N\}_{N \geq 1}$ converges weakly to zero.

\label{weakconvergence}
\end{lemma}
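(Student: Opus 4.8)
The plan is to leverage the three preceding lemmas: $\{R^1_N-R^2_N\}_{N\ge1}$ is bounded in $L^2(\nu_y)$ by Lemma \ref{boundingborderterms}, every weak limit point of this sequence equals $cp_0p_1$ for some real $c$ by Lemma \ref{subsequencelimits}, and the ``middle terms'' $I^i_N$ tend to $0$ strongly in $L^2(\nu_y)$ by Lemma \ref{middleterms}. Since $L^2(\nu_y)$ is separable and the sequence is bounded, it suffices to show that every weakly convergent subsequence of $\{R^1_N-R^2_N\}_{N\ge1}$ has limit $0$, i.e. that the constant $c$ produced by Lemma \ref{subsequencelimits} is necessarily $0$.

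To identify $c$ I would pair the exact decomposition
\[
X_{0,1}\Big(\sum_{j=-\infty}^{\infty}\tau^j\widehat g^{N}\Big)=\xi+y^{-4}\{I^1_N+I^2_N+I^3_N+R^1_N-R^2_N\}
\]
with the function $p_0p_1\in L^2(\nu_y)$. The left-hand side is of the form $X_{0,1}(\widetilde F)$ with $F=\widehat g^N$ a local function, so by property \textit{ii}) listed before Theorem \ref{characterizationofH_y} we have $\mathbb E_{\nu_y}[p_0p_1X_{0,1}(\sum_j\tau^j\widehat g^N)]=0$. On the right, $\mathbb E_{\nu_y}[p_0p_1\xi]=0$ by hypothesis \textit{ii}) on $\xi$, while $|\mathbb E_{\nu_y}[p_0p_1I^i_N]|\le\|p_0p_1\|_{L^2(\nu_y)}\,\|I^i_N\|_{L^2(\nu_y)}\to0$ by Cauchy--Schwarz and Lemma \ref{middleterms}, for $i=1,2,3$. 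Hence $\mathbb E_{\nu_y}[p_0p_1(R^1_N-R^2_N)]\to0$ as $N\to\infty$.

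Now take any subsequence along which $R^1_{N'}-R^2_{N'}$ converges weakly, say to $cp_0p_1$ (Lemma \ref{subsequencelimits}). Testing this weak convergence against $p_0p_1\in L^2(\nu_y)$ gives $\mathbb E_{\nu_y}[p_0p_1(R^1_{N'}-R^2_{N'})]\to c\,\mathbb E_{\nu_y}[p_0^2p_1^2]=cy^4$, and comparison with the limit obtained above forces $c=0$. Thus every weak limit point of the bounded sequence $\{R^1_N-R^2_N\}_{N\ge1}$ is $0$, so $R^1_N-R^2_N\rightharpoonup0$ in $L^2(\nu_y)$; combined with $I^i_N\to0$ this yields $X_{0,1}(\sum_j\tau^j\widehat g^N)\rightharpoonup\xi$, completing the proof of Theorem \ref{characterizationofH_y}.

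The one step requiring a little care is the application of property \textit{ii}) to $X_{0,1}(\sum_j\tau^j\widehat g^N)$: one must make sure that $\widehat g^N$ is regular enough for the integration by parts underlying that identity (and, ultimately, that it may be taken in $\mathcal A$, after a harmless mollification of $\xi$ at the very start of the construction); everything else is bookkeeping built on Lemmas \ref{boundingborderterms}, \ref{subsequencelimits}, and \ref{middleterms}.
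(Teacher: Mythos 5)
Your argument is correct and is essentially the paper's own proof: both multiply the decomposition $X_{0,1}(\sum_j\tau^j\widehat g^{N})=\xi+y^{-4}\{I^1_N+I^2_N+I^3_N+R^1_N-R^2_N\}$ by $p_0p_1$, take expectation, use the orthogonality $\mathbb E_{\nu_y}[p_0p_1\,X_{0,1}(\widetilde F)]=0$ together with hypothesis \textit{ii}) and the vanishing of the middle terms, and pass to the limit along a weakly convergent subsequence to force $c=0$. Your remark about the regularity of $\widehat g^{N}$ needed to justify the integration by parts is a fair point that the paper leaves implicit.
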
  
\begin{proof} 
In view of the boundness of  $\{R^1_{N'}\}_{{N'} \geq 1}$ and $\{R^2_{N'}\}_{{N'} \geq 1}$, it is enough to prove that every weak limit of the sequence $\{R^1_N - R^2_N\}_{N \geq 1}$ is equal to zero.

At the end of the proof of Lemma \ref{subsequencelimits} we see that every weak limit of $\{R^1_{N} - R^2_{N} \}_{{N} \geq 1}$ is of the form $(c_1 - c_2)p_0p_1$, where $c_1$ and $c_2$ are constants for which there exist further subsequences  $\{R^1_{N'}\}_{N \geq 1}$ and $\{R^2_{N'}\}_{N \geq 1}$ converging weakly to $c_1p_0p_1$ and $c_2p_0p_1$, respectively. 

On the other hand, recall that 
\[
X_{0,1}\left(\sum_{j=-\infty}^{\infty}\tau^j\widehat{g}^{N}\right)=\xi+y^{-4}I^1_N+y^{-4}I^2_N+y^{-4}I^3_N+y^{-4}R^1_N-y^{-4}R^2_N.
\]
Let us multiply the two sides of the last equality by the function $p_0p_1 \in L^2(\nu_y)$, and take expectation with respect to $\nu_y$. Thanks to the orthogonality condition ii), namely $\mathbb{E}_{\nu_y}[p_0p_1\xi]=0$, we have
\[
0 \;=\; y^{-4}\mathbb{E}_{\nu_y}[ p_0p_1(I^1_{N'}+I^2_{N'}+I^3_{N'})] + y^{-4}\mathbb{E}_{\nu_y}[ p_0p_1(R^1_{N'}-R^2_{N'})].
\]
Finally, taking the limit as $N'$ tends to infinity we obtain that $0=c_1-c_2$.
\end{proof}

\appendix 

\section{Spectral Gap}
\label{spectralgap}
We investigate in this section the spectral gap for the dynamics induced by the infinitesimal generator given by
\begin{equation}
\label{generatorgap}
L_N(f)=\frac{1}{2}\sum_{x=1}^{N-1}X_{x,x+1}[a(p_x,p_{x+1})X_{x,x+1}(f)]\;,
\end{equation}
with associate Dirichlet form defined as
\begin{equation}
\label{dirichletgap}
D_N(f)=\frac{1}{2}\sum_{x=1}^{N-1}\int_{\mathbb{R}^{N}}a(p_x,p_{x+1})[X_{x,x+1}(f)]^2\nu_y^{N}(dp).
\end{equation}
It is enough to consider $a \equiv 1$ and  $y=1$, so we omit the subindex $y$ in $\nu_y^{N}$.

The idea will be to relate our model with a similar one, known as the Kac's model, whose spectral gap is already known. Specifically, we find a relation between their Dirichlet forms and use it to obtain the desired spectral gap estimate for our model. 

The generator of the Kac's model is defined for continuous functions as 
\begin{equation}
\label{kacgenerator}
\mathfrak{L}_N(f)=\frac{1}{C_2^N}\sum_{1\leq i\leq j\leq N}\frac{1}{2\pi} \int_0^{2\pi}[f(R^{\theta}_{i,j}x)-f(x)]d\theta\;,
\end{equation}
where $R^{\theta}_{i,j}$ represents a clockwise rotation of angle $\theta$ on the plane $i,j$.
It is easy to see that spheres are invariant under this dynamics. 

To this generator is associated the following Dirichlet form 
\begin{equation}
\label{kacdirichletform}
\mathfrak{D}_N(f)=\frac{1}{C_2^N}\sum_{1\leq i< j\leq N}\frac{1}{2\pi} \int_0^{2\pi}\int_{S_r^{N-1}}[f(R^{\theta}_{i,j}x)-f(x)]^2 d\sigma_r(x) d\theta\;,
\end{equation}
where $S_r^{N-1}$ is the (N-1)-dimensional sphere of radius $r$ centered at the origin and $\sigma_r$ stands for the uniform measure over this sphere. In order to study the spectral gap is enough to treat with the unitary sphere, in which case we omit the subindex. 

This dynamics was used by Kac as a model for the spatially homogeneous Boltzmann equation. A complete description of this model can be founded in \cite{Car}.

Let us state the spectral gap estimate obtained in \cite{Jan} for the Kac's model.
\begin{lemma}[Janvresse]
\label{janvresse}
There exist a constant $C$ such that for all $f\in L^2(S^{N-1})$ we have
\[
E_{\sigma}[f;f]\leq C\  N\  \mathfrak{D}_N(f)\;,
\]
where $E_{\sigma}[f;g]$ denotes the covariance between $f$ and $g$ with respect to $\sigma$.
\end{lemma}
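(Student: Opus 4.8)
The plan is to recognize this as (a slightly weakened form of) Kac's spectral gap conjecture for the Kac walk on $S^{N-1}$, and to prove it by induction on the number of velocities $N$. First I would rephrase the claim as a Poincaré inequality for the uniform measure $\sigma$ on $S^{N-1}$: up to the normalization $1/C_2^N$, the form $\mathfrak{D}_N$ is the average over unordered pairs $(i,j)$ of the Dirichlet forms $\mathfrak{d}_{i,j}(f)=\frac{1}{2\pi}\int_0^{2\pi}\int_{S^{N-1}}[f(R^\theta_{i,j}x)-f(x)]^2\,d\sigma\,d\theta$ of the circle action in the plane $(i,j)$. Each orbit of $\{R^\theta_{i,j}\}$ is a circle, on which $\mathfrak{d}_{i,j}$ is exactly twice the conditional variance, so by Wirtinger one has $\mathfrak{d}_{i,j}(f)=2\,\mathbb E_\sigma[\mathrm{Var}(f\mid \mathcal O_{ij})]$. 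Thus it suffices to bound $\mathrm{Var}_\sigma(f)$ by a constant (or, with the stated slack, by $CN$) times $\frac{1}{C_2^N}\sum_{i<j}\mathfrak{d}_{i,j}(f)$.

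The core step is the recursion. Fix one coordinate, say $p_N$, and disintegrate $\sigma$ over the fibration $x\mapsto p_N$, whose fibers are rescaled spheres $S^{N-2}(\sqrt{1-p_N^2})$ carrying their own uniform measures. After centering $f$, write $\mathrm{Var}_\sigma(f)=\mathbb E_\sigma\big[\mathrm{Var}(f\mid p_N)\big]+\mathrm{Var}_\sigma\big(\mathbb E[f\mid p_N]\big)$. The inductive hypothesis applied fiberwise bounds the first term by (a constant times) the pair-energies $\mathfrak{d}_{i,j}$ with $i,j\le N-1$; the second term is the variance of a function of the single variable $p_N$, whose law under $\sigma$ is an explicit Beta-type density with moments computed by integration formulas of the type in Lemma \ref{intoverspheres}, and it is controlled by the pair-energies $\mathfrak{d}_{i,N}$ that involve the distinguished index. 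Averaging the resulting inequality over the $N$ choices of distinguished coordinate and collecting the combinatorial factors produces a relation between $\lambda_N$ and $\lambda_{N-1}$, where $\lambda_N$ is the optimal constant in the Poincaré inequality.

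\textbf{The main obstacle} is to close this recursion with a constant that does not deteriorate as $N\to\infty$: a crude implementation of the splitting above loses a multiplicative factor at each step and yields only an exponentially bad bound, whereas the point of Kac's conjecture is precisely that the gap stays bounded below. The resolution — carried out by Janvresse in \cite{Jan}, and more transparently in the subsequent determination of the exact gap by Carlen, Carvalho and Loss — is to arrange a self-improving recursion, essentially of the form $\lambda_N^{-1}\le (1+O(N^{-2}))\,\lambda_{N-1}^{-1}$, whose right-hand side telescopes so that $\sup_N \lambda_N^{-1}<\infty$. For the present statement there is an extra factor $N$ of slack on the right-hand side, so a far cruder accounting of the cross terms already suffices; in any case the estimate is exactly the content of \cite{Jan}, which we invoke.
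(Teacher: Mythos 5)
The paper offers no proof of this lemma at all: it is stated verbatim as Janvresse's spectral gap theorem for the Kac walk and justified solely by the citation to \cite{Jan}, which is exactly what your proposal ultimately does as well (your inductive sketch over the fibration $x\mapsto p_N$ is a fair outline of the Janvresse/Carlen--Carvalho--Loss argument, and the orbit identity $\mathfrak d_{i,j}(f)=2\,\mathbb E_\sigma[\mathrm{Var}(f\mid\mathcal O_{ij})]$ is correct). So the proposal is correct and takes essentially the same route as the paper, namely deferring to the cited result.
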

Defining
\begin{equation*}
B_{i,j}f(x)=\frac{1}{2\pi}\int_{0}^{2\pi}[f(R^{\theta}_{i,j}x)-f(x)]^2d\theta \;
\end{equation*}
and using the identity $\frac{\partial}{\partial\theta}[f(R^{\theta}_{i,j}x)-f(x)]=-X_{i,j}(f)(R^{\theta}_{i,j}x)$ together with Poincar\'e inequality on the interval $[0,2\pi]$, we obtain a term by term relation between (\ref{dirichletgap}) and (\ref{kacdirichletform}). Namely,
\[
\int_{S^{N-1}}B_{i,j}f(x)d\sigma(x)\leq 2\pi \int_{S^{N-1}} |X_{i,j}(f)|^2d\sigma(x)\;.
\]
Observe that in (\ref{dirichletgap}) just near neighbors interactions are involved, while in (\ref{kacdirichletform}) long range interactions are also considered. This fact demands an  additional argument in order to relate the two Dirichlet forms.

Elementary calculations based on the symmetries of the measure $\sigma$ lead us to the so-called path lemma
\begin{equation}
\label{basic5}
\int_{s^{N-1}}B_{i,i+k}f(x)d\sigma(x)\leq
64k\sum_{j=0}^{k-2}\int_{s^{N-1}}B_{i+j,i+j+1}f(x)d\sigma(x)\;.
\end{equation}
Finally we state the main result of this section, which follows from the preceding inequality, Lemma \ref{janvresse}, and the next well known formula
\begin{align*}
\int_{\mathbb{R}^{N}}f(p) \nu^{N}(dp)&=\int_0^{\infty}\left(\int_{S^{N-1}(\rho)}f(x)d\sigma(x)\right)\frac{\omega_{N}(\rho)}{(\sqrt{2\pi})^{N}}e^{-\frac{\rho^2}{2}}d\rho \;,
\end{align*}
where $\omega_{N}$ denotes the surface area of $S^{N-1}$.
\begin{lemma}
There exists a positive constant $C$ such that, for every $f\in L^2(\mathbb R^N)$ satisfying $\int_{S^{N-1}(r)}fd\sigma=0$ for all $r>0$, we have
\[
\int_{\mathbb{R}^{N}}f^2 \nu^{N}(dp) \leq CN^2\sum_{x=1}^{N-1}\int_{\mathbb{R}^{N}}[X_{x,x+1}(f)]^2\Phi_{N}dp\;.
\]
\end{lemma}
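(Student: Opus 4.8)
The plan is to integrate in polar coordinates and, on each sphere, combine Janvresse's spectral gap with the path lemma. First I would reduce, as indicated at the start of this appendix, to the case $a\equiv 1$, $y=1$, so that $\nu^N$ is the standard Gaussian and $\Phi_N$ its density. Fix a radius $r>0$. Since $\int_{S^{N-1}(r)}f\,d\sigma=0$, the restriction of the left-hand side to this sphere is exactly the variance $E_\sigma[f;f]$, and Lemma \ref{janvresse} (which holds on $S^{N-1}(r)$ for every $r$ by the scaling $x\mapsto x/r$, the rotations $R^{\theta}_{i,j}$ being radius-independent and the constant depending only on $N$) gives
\[
\int_{S^{N-1}(r)}f^2\,d\sigma \;\le\; C\,N\,\mathfrak{D}_N(f)\,.
\]

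Next I would bound $\mathfrak{D}_N(f)$ by the nearest-neighbour form. By definition $\mathfrak{D}_N(f)=\frac{1}{C_2^N}\sum_{i<j}\int_{S^{N-1}(r)}B_{i,j}f\,d\sigma$. Applying the path lemma (\ref{basic5}) to each long-range term $\int B_{i,j}f\,d\sigma$, where $j-i\le N$, and summing over all $C_2^N$ pairs while keeping track of how often a given bond $(l,l+1)$ occurs in the telescoped bounds, yields $\sum_{i<j}\int B_{i,j}f\,d\sigma \le C N^{3}\sum_{l=1}^{N-1}\int B_{l,l+1}f\,d\sigma$; since $C_2^N\sim N^2/2$ this reads $\mathfrak{D}_N(f)\le C N \sum_{l}\int_{S^{N-1}(r)} B_{l,l+1}f\,d\sigma$. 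Finally, the term-by-term Poincaré estimate $\int_{S^{N-1}(r)}B_{i,i+1}f\,d\sigma \le 2\pi \int_{S^{N-1}(r)}|X_{i,i+1}f|^2\,d\sigma$ established above turns this into
\[
\mathfrak{D}_N(f)\;\le\; C\,N \sum_{x=1}^{N-1}\int_{S^{N-1}(r)}|X_{x,x+1}f|^2\,d\sigma\,.
\]

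Combining the two displays gives, for every $r>0$,
\[
\int_{S^{N-1}(r)}f^2\,d\sigma \;\le\; C\,N^2 \sum_{x=1}^{N-1}\int_{S^{N-1}(r)}|X_{x,x+1}f|^2\,d\sigma\,,
\]
and to conclude I would multiply both sides by $\omega_N(\rho)(\sqrt{2\pi})^{-N}e^{-\rho^2/2}$ and integrate over $\rho\in(0,\infty)$: by the polar-coordinates formula recalled just above the lemma, the left-hand side becomes $\int_{\mathbb R^N}f^2\,\nu^N(dp)$ and the right-hand side becomes $C N^2\sum_x\int_{\mathbb R^N}|X_{x,x+1}f|^2\,\nu^N(dp)$, which is the claim. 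The only genuinely delicate point is the bookkeeping in the second step: one must verify that after applying the path lemma to all $O(N^2)$ pairs and dividing by $C_2^N$ the constant in front of the nearest-neighbour Dirichlet form grows only like $N$, so that together with the factor $N$ from Lemma \ref{janvresse} the final rate is $N^2$ and not larger; the path lemma (\ref{basic5}), Janvresse's estimate and the Poincaré bound for $B_{i,i+1}$ are taken as already proved.
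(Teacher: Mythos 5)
Your proposal is correct and follows exactly the route the paper intends: Janvresse's estimate on each sphere, the path lemma (\ref{basic5}) plus the term-by-term Poincar\'e bound to pass from the Kac Dirichlet form to the nearest-neighbour one, and the polar-coordinates formula to integrate over radii. The paper leaves the bookkeeping implicit, and your count (each bond $(l,l+1)$ is charged at most $\sum_k 64k(k-1)\le CN^3$ in total, which after dividing by $C_2^N\sim N^2/2$ leaves a single factor of $N$, hence $N^2$ overall) is precisely the verification needed.
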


\section{Some Geometrical Considerations}
\label{geomconsider}
The aim of this section is to establish conditions over a given set of functions  $\xi_{i,i+1}:\mathbb{R}^{n+1} \to \mathbb{R}$ for ${1\leq i\leq n}$ , which ensure the existence of a function $g:\mathbb{R}^{n+1} \to \mathbb{R}$ satisfying
\begin{equation}
\label{poissonequation}
X_{i,i+1}(g)=\xi_{i,i+1} \ \ \textit{for}\ \   1\leq i\leq n\;,
\end{equation}
where $X_{x,y}=p_{y}\partial_{p_{x}}-p_{x}\partial_{p_{y}}$. 

Observe that the vector fields $X_{i,j}$ act on spheres. In fact, we are interested in solving (\ref{poissonequation}) over spheres. The results obtained in this section will be used in the proof of Theorem \ref{characterizationofH_y} in Section \ref{{H}_y}. 

Let us remark that for $1\leq i<j \leq n$ we have
\begin{equation}
\label{liebrack}
[X_{i,i+1},X_{j,j+1}]=
\begin{cases}
X_{i,j+1}&\textit{if} \quad\quad i+1=j,\\
0&\textit{if} \quad\quad i+1 \neq j, 
\end{cases}
\end{equation}
where $[,]$ stands for the Lie bracket. Thus, the existence of such a function $g:\mathbb{R}^{n+1} \to \mathbb{R}$ satisfying (\ref{poissonequation}) give us some necessary conditions over the family $\{\xi_{i,i+1}\}$, namely
\begin{align}
\label{intcondition1}
X_{i,i+1}(\xi_{j,j+1})=X_{j,j+1}(\xi_{i,i+1}) \ \ \ \ \textit{if}\ \ \ \ i+1 \neq j\;,\\  \label{intcondition2}
p_{i+1}X_{j,j+1}(\xi_{i,i+1})-p_{i+1}X_{i,i+1}(\xi_{j,j+1})=p_{j+1}\xi_{i,i+1}\;+\;p_{i}\xi_{j,j+1}\;,
\end{align}
for $1\leq i<j \leq n$.

We state the main result of this section.
\begin{theorem}
\label{inttheorem}
Let $\xi_{i,i+1}:\mathbb{R}^{n+1} \to \mathbb{R}\;$ for ${1\leq i\leq n}$ be a  a set of functions satisfying conditions (\ref{intcondition1}) and (\ref{intcondition2}). Then, for every $r>0$ there exists a function $g_r:S^n(r) \to \mathbb R\;$  such that
\[
X_{i,i+1}(g_r)=\xi_{i,i+1} \ \ \textit{on} \ \ S^n(r)\;. 
\]
\end{theorem}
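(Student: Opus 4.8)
The plan is to view the system $X_{i,i+1}(g_r)=\xi_{i,i+1}$, $1\le i\le n$, as an overdetermined first-order system on the manifold $S^n(r)$ and to integrate it via the Poincar\'e lemma, using that $S^n(r)$ is simply connected (we take $n\ge 2$, the only range needed in Section \ref{{H}_y}; $n=1$ is trivial). The vector fields $X_{i,i+1}$ are fundamental vector fields of the standard $SO(n+1)$-action on $S^n(r)$, and by (\ref{liebrack}) their iterated Lie brackets generate $X_{i,j}$ for all $1\le i<j\le n+1$; since $SO(n+1)$ acts transitively, the family $\{X_{i,j}\}_{i<j}$ spans $T_pS^n(r)$ at every point $p$.

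First I would enlarge the data to all index pairs. As a solution $g$ would satisfy $X_{i,j}g=[X_{i,j-1},X_{j-1,j}]g=X_{i,j-1}(X_{j-1,j}g)-X_{j-1,j}(X_{i,j-1}g)$, set $\xi_{i,i+1}$ to be the given functions and define recursively $\xi_{i,j}:=X_{i,j-1}(\xi_{j-1,j})-X_{j-1,j}(\xi_{i,j-1})$ for $j>i+1$; for a skew combination write $\widehat{\sum c_{ab}X_{a,b}}:=\sum c_{ab}\,\xi_{a,b}$. The heart of the proof is the cocycle identity
$$
X_{a,b}(\xi_{c,d})-X_{c,d}(\xi_{a,b})=\widehat{[X_{a,b},X_{c,d}]}\qquad\text{for all }a<b,\ c<d,
$$
which I would prove by induction on $(b-a)+(d-c)$: the base case, both pairs adjacent, is exactly (\ref{intcondition1}) for disjoint pairs and (\ref{intcondition2}) for overlapping pairs, while the inductive step substitutes the recursion defining one of the $\xi$'s and reorganizes the terms using the Jacobi identity in $\mathfrak{so}(n+1)$ (whose structure constants in this basis are $0,\pm1$). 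This combinatorial induction -- sorting out the various overlap patterns of the two index pairs -- is the step I expect to be the main obstacle; the remainder is formal.

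Granting the cocycle identity, define the $1$-form $\omega$ on $S^n(r)$ by $\omega(X_{i,j})=\xi_{i,j}$. It is well defined: a linear relation $\sum c_{ij}X_{i,j}(p)=0$ corresponds to an element $A$ of the isotropy algebra $\mathfrak h_p\cong\mathfrak{so}(n)$, and for $B,B'\in\mathfrak h_p$ the cocycle identity at $p$ gives $\widehat{[B,B']}(p)=0$, since each term on the right is a directional derivative along $B^{*}(p)=0$ or $(B')^{*}(p)=0$; as $\mathfrak{so}(n)$ is perfect for $n\ge 3$ this forces $\widehat A(p)=\sum c_{ij}\xi_{ij}(p)=0$ (the small case $n=2$ is checked directly on $S^2(r)$). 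Then for any two spanning fields $d\omega(X_{a,b},X_{c,d})=X_{a,b}(\xi_{c,d})-X_{c,d}(\xi_{a,b})-\widehat{[X_{a,b},X_{c,d}]}=0$, and since the $X_{i,j}$ span every tangent space, $d\omega=0$. Because $S^n(r)$ is simply connected, the Poincar\'e lemma gives $g_r$ with $dg_r=\omega$, whence $X_{i,i+1}(g_r)=\omega(X_{i,i+1})=\xi_{i,i+1}$ on $S^n(r)$. Equivalently, one can package the argument as the Frobenius theorem applied to the rank-$n$ distribution on $S^n(r)\times\mathbb R$ spanned by the fields $X_{i,j}+\xi_{i,j}\,\partial_u$, whose involutivity is precisely the cocycle identity and whose leaf through a chosen point is the graph of $g_r$.
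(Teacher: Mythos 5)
Your proposal is correct and follows essentially the same route as the paper: extend the data to all pairs via the bracket recursion, define a $1$-form on $S^n(r)$ by its values on the fields $X_{i,j}$, verify closedness through the identity $d\omega(X,Y)=X\omega(Y)-Y\omega(X)-\omega([X,Y])$ with (\ref{intcondition1})--(\ref{intcondition2}) as input, and conclude by exactness of closed $1$-forms on spheres. If anything, your treatment of well-definedness (via perfectness of the isotropy algebra $\mathfrak{so}(n)$) makes explicit a point the paper only asserts, while the combinatorial induction you defer is likewise left implicit in the paper.
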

The approach we adopt to prove this result consist in defining over each sphere $S^n(r)$ an \textit{ad hoc} differential $1$-form $\omega_r$. Conditions (\ref{intcondition1}) and (\ref{intcondition2}) will imply the closeness of $\omega_r$, which in view of Proposition \ref{deRham} below implies the existence of the desired $g_r$.

Now we state two well known results in differential geometry.
\begin{proposition}
\label{differential1}
Let $\omega$ be a $1$-form on a differentiable manifold $M$, and $X,Y$ differential vector fields on $M$, then
\[
d\omega(X,Y)=X\omega(Y)-Y\omega(X)-\omega([X,Y])\;,
\]
where $[ \;,]$ denote the Lie bracket.
\end{proposition}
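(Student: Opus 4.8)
The plan is to reduce the identity to a local coordinate computation. Since both sides are $\mathbb{R}$-bilinear and antisymmetric in $(X,Y)$, and since the exterior derivative, the Lie bracket and the pairing of a $1$-form with a vector field are all local operations, it suffices to verify the formula on a coordinate patch around an arbitrary point of $M$. There I would write $\omega=\sum_k \omega_k\,dx^k$ with $\omega_k$ smooth, and use the coordinate vector fields $\partial_k=\partial/\partial x^k$, which satisfy $[\partial_i,\partial_j]=0$.

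The one genuinely non-formal point, and the step I expect to be the main obstacle, is to show that the right-hand side
\[
T(X,Y):=X\bigl(\omega(Y)\bigr)-Y\bigl(\omega(X)\bigr)-\omega\bigl([X,Y]\bigr)
\]
is $C^\infty(M)$-bilinear, hence a genuine $(0,2)$-tensor field; note that none of the three summands is separately tensorial. Antisymmetry and additivity in each slot are immediate, so I would only check homogeneity in the first argument: for $f\in C^\infty(M)$, inserting the identity $[fX,Y]=f[X,Y]-(Yf)\,X$ into $T(fX,Y)$ produces two terms equal to $+(Yf)\,\omega(X)$ and $-(Yf)\,\omega(X)$ which cancel, leaving $T(fX,Y)=f\,T(X,Y)$; homogeneity in the second slot then follows from antisymmetry. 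Consequently $T$ is a $2$-form, and since $d\omega$ is one too, it is enough to compare the two on the coordinate frame $\{\partial_i\}$.

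To finish I would evaluate both sides on $(\partial_i,\partial_j)$. Using $d\omega=\sum_{k,l}(\partial_l\omega_k)\,dx^l\wedge dx^k$ gives $d\omega(\partial_i,\partial_j)=\partial_i\omega_j-\partial_j\omega_i$, while $\omega(\partial_j)=\omega_j$ and $[\partial_i,\partial_j]=0$ give $T(\partial_i,\partial_j)=\partial_i\omega_j-\partial_j\omega_i$ as well. Agreement on a frame yields agreement as $2$-forms, which is the claimed identity. Everything past the tensoriality verification is a routine computation; the real content of the argument is that the unwanted derivatives of the test function cancel in the particular combination defining $T$.
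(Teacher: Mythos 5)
Your argument is correct: the cancellation of the $(Yf)\,\omega(X)$ terms via $[fX,Y]=f[X,Y]-(Yf)X$ does establish that $T$ is $C^\infty(M)$-bilinear, hence tensorial, and the comparison with $d\omega$ on a coordinate frame (where $[\partial_i,\partial_j]=0$) completes the proof under the convention, used in the statement, that $dx^l\wedge dx^k$ has no factor $\tfrac12$. Note that the paper does not prove this proposition at all --- it is quoted as a well-known fact of differential geometry, needed only to verify closedness of the $1$-form in Theorem \ref{inttheorem} --- so there is no proof to compare against; yours is the standard textbook argument and is complete.
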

\begin{proposition}
\label{deRham}
Every closed differential 1-form on a n-dimensional sphere with $n\geq 2$ is exact.
\end{proposition}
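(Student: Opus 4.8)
The plan is to deduce this from two standard facts: the Poincar\'e lemma (every closed form on $\mathbb{R}^n$ is exact) and the connectedness of a certain overlap, which is precisely where the hypothesis $n\ge 2$ enters. Concretely, I would fix antipodal points $N,S\in S^n$ and set $U=S^n\setminus\{N\}$ and $V=S^n\setminus\{S\}$. Stereographic projection identifies each of $U$ and $V$ diffeomorphically with $\mathbb{R}^n$, and $U\cap V=S^n\setminus\{N,S\}$ is diffeomorphic to $S^{n-1}\times(-1,1)$; in particular $U\cap V$ is connected exactly because $n\ge 2$, so that $S^{n-1}$ is connected. This is the only place the hypothesis is used; indeed for $n=1$ the overlap has two components and the statement is false.

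Given a closed $1$-form $\omega$ on $S^n$, I would restrict it to $U\cong\mathbb{R}^n$ and apply the Poincar\'e lemma to obtain $f\in C^\infty(U)$ with $df=\omega|_U$, and similarly pick $h\in C^\infty(V)$ with $dh=\omega|_V$. On the overlap $d(f-h)=\omega-\omega=0$, so $f-h$ is locally constant on $U\cap V$, and by connectedness it equals a single constant $c$. Replacing $h$ by $h+c$, which still satisfies $d(h+c)=\omega|_V$, we may assume $f=h$ on $U\cap V$.

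The final step is to glue: since $f$ and $h$ agree on $U\cap V$ and $U\cup V=S^n$, the prescription $g:=f$ on $U$ and $g:=h$ on $V$ defines a function $g\in C^\infty(S^n)$, and $dg$ equals $\omega$ on $U$ and on $V$, hence on all of $S^n$. Thus $\omega=dg$ is exact.

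I do not anticipate a genuine obstacle here; the argument is essentially bookkeeping around the Poincar\'e lemma, with the connectedness of $U\cap V$ being the single substantive input. An equally valid alternative route is to invoke that $S^n$ is simply connected for $n\ge 2$ and define $g(x)=\int_{\gamma}\omega$ over a smooth path $\gamma$ from a fixed basepoint to $x$: closedness of $\omega$ together with Stokes' theorem applied to a homotopy between two such paths gives well-definedness, and $dg=\omega$ follows by differentiating $t\mapsto g(\gamma(t))$ along a path with prescribed initial velocity. The only slightly technical points on that route are approximating continuous paths and homotopies by smooth ones.
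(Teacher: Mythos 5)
Your argument is correct. Note that the paper itself offers no proof of this proposition: it is listed, together with the formula for $d\omega(X,Y)$, as a ``well known result in differential geometry'' and used as a black box in the proof of Theorem \ref{inttheorem}; the content is simply the vanishing of the first de Rham cohomology of $S^n$ for $n\geq 2$. Your two-chart argument --- Poincar\'e lemma on each stereographic chart $U=S^n\setminus\{N\}$, $V=S^n\setminus\{S\}$, followed by adjusting the primitive on $V$ by the single constant that $f-h$ equals on the connected overlap $U\cap V\cong S^{n-1}\times(-1,1)$, and gluing --- is a complete and standard proof, and you correctly isolate the connectedness of the overlap as the only place where $n\geq 2$ enters (and why the statement fails for $S^1$). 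The alternative you sketch, integrating $\omega$ along paths and using simple connectedness of $S^n$ for $n\geq 2$ together with Stokes' theorem, is equally valid, modulo the smooth-approximation bookkeeping you mention; either route supplies what the paper takes for granted.
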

\begin{proof}[Proof of Theorem \ref{inttheorem}]
Let $\{Y_1, \cdots, Y_n\}$ be a subset of $n$ vector fields belonging to the Lie algebra generated by $\{X_{i,i+1}\}_{1\leq i\leq n}$. Consider the subset of the sphere $A$ such that, for $p\in A\subset S^n(r)$ the set of vectors $\beta_{p}=\{Y_1(p), \cdots, Y_n(p)\}$ form a basis for the tangent space $T_pS^n(r)$, with associated dual basis denoted by $\beta^*_p=\{dY_1, \cdots, dY_n\}$.

Define on $A \subset S^n(r)$ the following differential 1-form
\[
\omega=\sum_{k=1}^n\xi_{k}\;dY_k.
\]
Here $\xi_{k}$ is the corresponding component in terms of $\{\xi_{i,i+1}\}_{1\leq i\leq n}$. For instance, if $Y_k=X_{1,2}$ then $\xi_{k}=\xi_{1,2}$, or if $Y_k=[X_{1,2},X_{2,3}]$ then $\xi_{k}=X_{1,2}(\xi_{2,3})-X_{2,3}(\xi_{1,2})$.  

It follows from (\ref{liebrack}) that the Lie algebra generated by $\{X_{i,i+1}\}_{1\leq i\leq n}$ is of maximal dimension over each sphere. Thus, varying over all the subsets of size $n$ of the Lie algebra, the sets $A$ form a covering of the whole sphere. Moreover, any two differential forms defined in this way will coincide in their common domain of definition. This induces a differential 1-form $\;\omega_r\;$ well defined on the whole sphere $S^n(r)$.

In order to prove closeness of the differential 1-form $\;\omega_r\;$, it suffices to prove closeness of each of the differential forms given above, which is reduce to prove $d\omega(Y_{i},Y_{j})=0\;$ for  $\; i,j \in \{1,\cdots,n\}$. The proof of this fact follows from conditions (\ref{intcondition1}), (\ref{intcondition2}) and Proposition \ref{differential1}. 
\end{proof}

\section{Equivalence of Ensembles}
\label{equiensem}
In this work we need to consider equivalence of ensembles for unbounded functions. The same is required in \cite{BBO} where, by means of a modification on the arguments in \cite{DF}, a proof of the following statement is given.
\begin{lemma}
Let $\nu_{N,y\sqrt N}$ be the uniform measure on the sphere 
$$
S^N(y\sqrt N)=\{(p_1,\cdots,p_N) \in \mathbb R^N : \sum_{i=1}^N p_i^2=Ny^2\}\;,
$$
and $\nu_y^{\infty}$ the infinite product of Gaussian measures with mean zero and variance $y^2$. Given a function $\phi$ on $\mathbb R^K$, such that for some positive constants $\theta$ and $C$ 
$$
|\phi(p_1,\cdots,p_K)| \leq C\left( \sum_{i=1}^K p_i^2 \right)^{\theta} \;,
$$
there exist a constant $C'=C'(C,\theta, K,y)$ such that
$$
\limsup_{N \to \infty}N \left|E_{\nu_{N,y\sqrt N}}[\phi]-E_{\nu_{y}^{\infty}}[\phi]\right| \leq C'\;.
$$
\end{lemma}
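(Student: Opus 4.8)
The plan is to reduce the statement to a local comparison of marginal densities. Under $\nu_{N,y\sqrt N}$ the marginal law of $(p_1,\dots,p_K)$ is absolutely continuous on the ball $\{|p|^2 < Ny^2\}$ with density, obtainable from Lemma \ref{intoverspheres} (equivalently from the classical projection formula for the uniform measure on a sphere),
\[
f_{N,K}(p)=\frac{\Gamma(N/2)}{\pi^{K/2}\Gamma((N-K)/2)}\,(Ny^2)^{-K/2}\Big(1-\frac{|p|^2}{Ny^2}\Big)^{(N-K-2)/2}\;,
\]
while the corresponding marginal of $\nu_y^\infty$ has the Gaussian density $g_K(p)=(2\pi y^2)^{-K/2}e^{-|p|^2/(2y^2)}$. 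Since $E_{\nu_{N,y\sqrt N}}[\phi]-E_{\nu_y^\infty}[\phi]=\int_{\mathbb R^K}\phi(p)\,[f_{N,K}(p)-g_K(p)]\,dp$, it suffices to bound $N$ times this integral, and I would split $\mathbb R^K$ into the bulk $\{|p|^2\le Ny^2/2\}$ and its complement.

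On the bulk I would perform a second-order expansion of both ingredients of $f_{N,K}$. Writing the last factor as $\exp\big(\tfrac{N-K-2}{2}\log(1-|p|^2/(Ny^2))\big)$ and Taylor expanding the logarithm, together with Stirling's formula for $\Gamma(N/2)/\Gamma((N-K)/2)$, one gets $f_{N,K}(p)=g_K(p)(1+\gamma_N)e^{\rho_N(p)}$ with $|\gamma_N|\le C/N$ and $|\rho_N(p)|\le q(p)/N$, where $q(p)=\tfrac{|p|^4}{2y^4}+\tfrac{(K+2)|p|^2}{2y^2}$ is a fixed degree four polynomial. Using $|e^u-1|\le |u|e^{|u|}$ this yields $|f_{N,K}(p)-g_K(p)|\le \tfrac{C}{N}g_K(p)(1+q(p))e^{q(p)/N}$, and on the bulk $q(p)/N\le |p|^2/(4y^2)+(K+2)/4$, so $g_K(p)e^{q(p)/N}\le C\,(2\pi y^2)^{-K/2}e^{-|p|^2/(4y^2)}$. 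Multiplying by $|\phi(p)|\le C|p|^{2\theta}$ and integrating against the convergent Gaussian integral $\int_{\mathbb R^K}|p|^{2\theta}(1+q(p))e^{-|p|^2/(4y^2)}dp<\infty$ gives $N\big|\int_{\{|p|^2\le Ny^2/2\}}\phi\,(f_{N,K}-g_K)\big|\le C'(C,\theta,K,y)$.

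For the complementary region I would treat the two measures separately. The Gaussian contribution $\int_{\{|p|^2>Ny^2/2\}}|\phi|\,g_K$ is bounded, after factoring out $e^{-|p|^2/(4y^2)}$ to absorb $|p|^{2\theta}$, by $e^{-N/8}$ times a constant, so $N$ times it is $o(1)$. The sphere contribution $\int_{\{Ny^2/2<|p|^2<Ny^2\}}|\phi|\,f_{N,K}$ is at most $C(Ny^2)^\theta\,\mathbb P_{\nu_{N,y\sqrt N}}(\sum_{i\le K}p_i^2>Ny^2/2)$; by Lemma \ref{intoverspheres} the moments $E_{\nu_{N,y\sqrt N}}[(\sum_{i\le K}p_i^2)^m]$ are bounded uniformly in $N$ for each fixed $m$, so Markov's inequality gives a bound $C_m N^{-m}$, and choosing $m>\theta+1$ makes $N$ times this term vanish as well. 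Summing the three contributions yields $\limsup_N N\,|E_{\nu_{N,y\sqrt N}}[\phi]-E_{\nu_y^\infty}[\phi]|\le C'$.

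The main obstacle is the sharpness of the rate: obtaining $1/N$ rather than merely $o(1)$ forces one to keep the next order term in Stirling's approximation of $\Gamma(N/2)/\Gamma((N-K)/2)$ and in the expansion of $\log(1-|p|^2/(Ny^2))$, with remainders controlled uniformly enough to survive multiplication by a polynomially growing $\phi$; care is also needed to ensure that the bulk estimates genuinely hold up to $|p|^2\sim Ny^2/2$ so that what remains near the boundary of the sphere can be absorbed by the moment bounds. This is precisely the Diaconis--Freedman type local limit argument, extended to unbounded test functions, carried out in \cite{BBO} (following \cite{DF}).
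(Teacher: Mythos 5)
Your proposal is correct and is essentially the same argument the paper relies on: the paper gives no proof of its own but defers to \cite{BBO}, which modifies the Diaconis--Freedman local limit comparison of the spherical marginal density with the Gaussian, exactly as you sketch (second-order expansion of the density on the bulk to extract the $1/N$ rate, plus moment/tail bounds to handle the polynomially growing $\phi$ near the boundary of the sphere). The details you give — the explicit marginal density, the bound $|\rho_N(p)|\le q(p)/N$ with $q$ of degree four, and the Markov-inequality treatment of the region $|p|^2>Ny^2/2$ — all check out.
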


\bigskip

\textbf {Acknowledgements} 

\bigskip

It is a pleasure to thank my thesis advisors S. Olla and C. Landim  for suggesting this problem and for their valuable orientation. I am also grateful to  S.R.S.  Varadhan for his help in the proof of Theorem (\ref{characterizationofH_y}).

\end{document}